\newcommand{\Met}{{\mathrm{SL}_2(\mathbb R)^{\sim}}}
\newcommand{\spacedcdot}{{\,\cdot\,}}
\newcommand{\bbC}{{\mathbb{C}}}
\newcommand{\bbR}{{\mathbb{R}}}
\newcommand{\bbZ}{{\mathbb{Z}}}
\newcommand{\bfGamma}{{\mathbf{\Gamma}}}
\newcommand{\Lie}{{\mathrm{Lie}}}
\newcommand{\M}{{\mathrm{M}}}
\DeclareMathOperator{\Hol}{{\mathrm{Hol}}}
\newcommand{\calA}{{\mathcal{A}}}
\newcommand{\calC}{{\mathcal{C}}}
\newcommand{\calH}{{\mathcal{H}}}
\newcommand{\frakg}{{\mathfrak{g}}}
\newcommand{\fraksl}{{\mathfrak{sl}}}
\def \ScptH{\mathcal H}
\def\SL#1{{\mathrm{SL}}_{#1}}
\providecommand{\abs}[1]{\left\lvert#1\right\rvert}
\providecommand{\norm}[1]{\left\lVert#1\right\rVert}
\providecommand{\scal}[2]{\left<#1,#2\right>}
\numberwithin{equation}{section}
\newtheorem{Prop}[equation]{Proposition}
\newtheorem{Lem}[equation]{Lemma}
\newtheorem{Thm}[equation] {Theorem}
\newtheorem{Cor}[equation]{Corollary}
\title
   [ Poincar\' e series of half-integral weight]
   { On Poincar\' e series of half-integral weight}
\author{Sonja \v Zunar}
\address{ Department of Mathematics,
Faculty of Science,
University of Zagreb,
Bijeni\v cka 30,
10000 Zagreb,
Croatia}
\email{szunar@math.hr}
\subjclass[2010]{11F12, 11F37}
\keywords{Cusp forms of half-integral weight, Poincar\' e series, metaplectic cover of $ \mathrm{SL}_2(\mathbb R) $}
\thanks{The author acknowledges Croatian Science Foundation grant no. 9364.}
\begin{document}
\maketitle

\begin{abstract}
	We use Poincar\' e series of $ K $-finite matrix coefficients of genuine integrable representations of the metaplectic cover of $ \SL2(\bbR) $ to construct a spanning set for the space of cusp forms $ S_m(\Gamma,\chi) $, where $ \Gamma $ is a discrete subgroup of finite covolume in the metaplectic cover of $ \SL2(\bbR) $, $ \chi $ is a character of $ \Gamma $ of finite order, and $ m\in\frac52+\bbZ_{\geq0} $. We give a result on the non-vanishing of the constructed cusp forms and compute their Petersson inner product with any $ f\in S_m(\Gamma,\chi) $. Using this last result, we construct a Poincar\' e series $ \Delta_{\Gamma,k,m,\xi,\chi}\in S_m(\Gamma,\chi) $ that corresponds, in the sense of the Riesz representation theorem, to the linear functional $ f\mapsto f^{(k)}(\xi) $ on $ S_m(\Gamma,\chi) $, where $ \xi\in\bbC_{\Im(z)>0} $ and $ k\in\bbZ_{\geq0} $. Under some additional conditions on $ \Gamma $ and $ \chi $, we provide the Fourier expansion of cusp forms $ \Delta_{\Gamma,k,m,\xi,\chi} $ and their expansion in a series of classical Poincar\' e series.
\end{abstract}

\section{Introduction}

In this paper, we adapt representation-theoretic techniques developed for the group $ \SL2(\bbR) $ in \cite{MuicJNT} and \cite{MuicInner} to the case of the metaplectic cover of $ \SL2(\bbR) $. Using this, we prove a few results on cusp forms of half-integral weight. 

To give an overview of our results, we introduce the basic notation. The metaplectic cover of $ \SL2(\bbR) $ can be realized as the group
\[ \Met:=\left\{\sigma=\left(g_\sigma=\begin{pmatrix}a&b\\c&d\end{pmatrix},\eta_\sigma\right)\in\SL2(\bbR)\times\Hol(\calH):\eta_\sigma^2(z)=cz+d\text{ for all }z\in\calH\right\}, \]
where $ \Hol(\calH) $ is the space of all holomorphic functions defined on the upper half-plane $ \calH $. The multiplication law and smooth structure of $ \Met $ are defined in Section \ref{sec:069}.  $ \Met $ acts on $ \calH $ by $ \left(\begin{pmatrix}a&b\\ c&d\end{pmatrix},\eta\right).z:=\frac{az+b}{cz+d} $. Moreover, for every $ m\in\frac12+\bbZ_{\geq0} $ we have the following right action of $ \Met $ on $ \bbC^{\calH} $: $ \left(f\big|_m\sigma\right)(z):=f(\sigma.z)\eta_\sigma(z)^{-2m}. $ Let $ P:\Met\to\SL2(\bbR) $ be the projection onto the first coordinate.

Next, let $ \Gamma $ be a discrete subgroup of finite covolume in $ \Met $, $ \chi:\Gamma\to\bbC^\times $ a character of finite order, and $ m\in\frac32+\bbZ_{\geq0} $. The space $ S_m(\Gamma,\chi) $ of cusp forms of weight $ m $ for $ \Gamma $ with character $ \chi $ by definition consists of all $ f\in\Hol(\calH) $ that satisfy $ f\big|_m\gamma=\chi(\gamma)f $ for all $ \gamma\in\Gamma $ and vanish at all cusps of $ P(\Gamma) $. $ S_m(\Gamma,\chi) $ is a finite-dimensional Hilbert space under the Petersson inner product $ \scal{f_1}{f_2}_{\Gamma}:=\abs{\Gamma\cap Z\left(\Met\right)}^{-1}\int_{\Gamma\backslash\calH}f_1(z)\overline{f_2(z)}\Im(z)^m\,dv(z) $, where $ dv(x+iy):=\frac{dx\,dy}{y^2} $. We write $ S_m(\Gamma):=S_m(\Gamma,1) $. Let us denote by $ \Phi $ the classical lift $ S_m(\Gamma)\to\bbC^\Met $ that maps $ f\in S_m(\Gamma) $ to $ F_f:\Met\to\bbC $, $ F_f(\sigma):=\left(f\big|_m\sigma\right)(i) $, where $ i $ is the imaginary unit. $ \Phi $ is a
unitary isomorphism $ S_m(\Gamma)\to\Phi(S_m(\Gamma))=:\calA\left(\Gamma\backslash\Met\right)_m\subseteq L^2\left(\Gamma\backslash\Met\right) $ (Theorem \ref{thm:005}).

The starting point of this paper are results of \cite{ZunarNonVan2017}, where we applied the techniques of \cite{MuicJNT} to compute certain $ K $-finite matrix coefficients of genuine integrable representations of $ \Met $ and study their Poincar\' e series with respect to $ \Gamma $. In Lemma \hyperref[lem:045:5]{\ref*{lem:045}.(\ref*{lem:045:5})}, we show that the Poincar\' e series $ P_\Gamma F_{k,m} $ ($ k\in\bbZ_{\geq0} $, $ m\in\frac52+\bbZ_{\geq0} $) discussed in \cite[Section 6]{ZunarNonVan2017} belong to $ \calA\left(\Gamma\backslash\Met\right)_m $. The main result of this paper is Theorem \ref{thm:017}, in which we compute the Petersson inner product of $ P_\Gamma F_{k,m} $ with any $ \varphi\in\calA\left(\Gamma\backslash\Met\right)_m $ using the representation theory of $ \Met $. It is the $ \Met $-variant of \cite[Theorem 2-11]{MuicInner}.

In the rest of the paper, we use the facts of the previous paragraph to prove a few results about $ S_m(\Gamma,\chi) $ for $ m\in\frac52+\bbZ_{\geq0} $. Most of these results are half-integral weight variants of results of \cite{MuicJNT}, \cite{MuicCurves}, \cite{MuicLFunk}, and \cite{MuicInner}. 

First,
by considering the preimages of functions $ P_\Gamma F_{k,m} $ under $ \Phi $, in Theorem \ref{thm:020} we construct the following spanning set for $ S_m(\Gamma,\chi) $ (cf.\ \cite[Lemma 4-2]{MuicJNT}):
\begin{equation}\label{eq:073}
\left(P_{\Gamma,\chi}f_{k,m}\right)(z):=(2i)^m\sum_{\gamma\in\Gamma}\overline{\chi(\gamma)}\frac{\left(\gamma.z-i\right)^k}{\left(\gamma.z+i\right)^{m+k}}\eta_\gamma(z)^{-2m},\qquad z\in\calH,\ k\in\bbZ_{\geq0},
\end{equation}
(see \eqref{eq:091}). Moreover, we obtain results (Theorem \ref{thm:030} and Corollary \ref{cor:070}) on the non-vanishing of cusp forms $ P_{\Gamma,\chi}f_{k,m} $ in the case when $ P(\Gamma)\subseteq\SL2(\bbZ) $ by adapting our study of the non-vanishing of functions $ P_\Gamma F_{k,m} $ conducted in \cite[Section 6]{ZunarNonVan2017}. 

Next, Theorem \ref{thm:017} translates via the unitary isomorphism $ \Phi^{-1} $ to Theorem \hyperref[thm:020:3]{\ref*{thm:020}.(\ref*{thm:020:3})}, which states that, for every $ k\in\bbZ_{\geq0} $,
\[ \scal f{P_{\Gamma,\chi}f_{k,m}}_\Gamma=\sum_{l=0}^k\binom kl(2i)^l\frac{4\pi}{\prod_{r=0}^{l}(m-1+r)}f^{(l)}(i),\qquad f\in S_m(\Gamma,\chi). \] 
It is a short way from this relation to the proof of the following fact in the case when $ \xi=i $: for every $ k\in\bbZ_{\geq0} $, the Poincar\' e series
\begin{equation}\label{eq:074}
\Delta_{\Gamma,k,m,\xi,\chi}(z):= \frac{(2i)^m}{4\pi}\left(\prod_{r=0}^{k}(m-1+r)\right)\sum_{\gamma\in\Gamma}\frac{\overline{\chi(\gamma)}}{\left(\gamma.z-\overline\xi\right)^{m+k}}\eta_\gamma(z)^{-2m},\qquad z\in\calH, 
\end{equation}
belongs to $ S_m(\Gamma,\chi) $ and satisfies
\[ \scal f{\Delta_{\Gamma,k,m,\xi,\chi}}_\Gamma=f^{(k)}(\xi),\qquad f\in S_m(\Gamma,\chi).  \]
We prove that this holds for all $ \xi\in\calH $ in Proposition \ref{prop:026} and Theorem \ref{thm:048} (cf.\ \cite[Corollary 1-2]{MuicInner}). 

Incidentally, our proof of Theorem \ref{thm:048} proves the following integral formula (Corollary \ref{cor:071}): 
\begin{equation}\label{eq:083}
f^{(k)}(\xi)=\frac{(-2i)^m}{4\pi}\left(\prod_{r=0}^k(m-1+r)\right)\int_\calH\frac{f(z)}{\left(\overline z-\xi\right)^{m+k}}\Im(z)^m\,dv(z)
\end{equation}
for all $ f\in S_m(\Gamma,\chi) $, $ k\in\bbZ_{\geq0}$, and $ \xi\in\calH $. We use this formula in Corollary \ref{cor:081} to give a short proof that
\begin{equation}\label{eq:089}
\sup_{\xi\in\calH}\abs{f^{(k)}(\xi)\Im(\xi)^{\frac m2+k}}<\infty,\qquad f\in S_m(\Gamma,\chi),\ k\in\bbZ_{\geq0},
\end{equation}
which enables us to prove, in Proposition \ref{prop:082}, that
\[ \sup_{z,\xi\in\calH}\Im(\xi)^{\frac m2+k}\Im(z)^{\frac m2}\abs{\Delta_{\Gamma,k,m,\xi,\chi}(z)}<\infty  \]
for every $ k\in\bbZ_{\geq0} $ (cf.\ \cite[(1-5)]{MuicLFunk}).

Next, assume that $ \infty $ is a cusp of $ P(\Gamma) $ and that $ \eta_\gamma^{-2m}=\chi(\gamma) $ for all $ \gamma\in\Gamma_\infty $, so that we have the classical Poincar\' e series $ \psi_{\Gamma,n,m,\chi}\in S_m(\Gamma,\chi) $, $ n\in\bbZ_{>0} $, defined by
\[ \psi_{\Gamma,n,m,\chi}(z):=\sum_{\gamma\in\Gamma_\infty\backslash\Gamma}\overline{\chi(\gamma)}e^{2\pi in\frac{\gamma.z}h}\eta_\gamma(z)^{-2m},\qquad z\in\calH, \]
where $ h\in\bbR_{>0} $ is such that the group $ \left\{\pm 1\right\}P(\Gamma_\infty) $ is generated by $ \left\{\pm\begin{pmatrix}1&h\\&1\end{pmatrix}\right\} $. 
Theorem \ref{thm:035} gives the Fourier expansion of cusp forms $ \Delta_{\Gamma,k,m,\xi,\chi} $ and their expansion in a series of classical Poincar\' e series (cf.\ \cite[Theorem 3-5]{MuicCurves}). In Corollary \ref{cor:090}, this Fourier expansion combined with \eqref{eq:089} provides a quick proof of some bounds on the derivatives of classical Poincar\' e series (cf.\ \cite[Theorem 1-2]{MuicLFunk}).

Finally, in Section \ref{sec:084} we apply our results to the standardly defined spaces $ S_m(N,\chi) $, where $ N\in4\bbZ_{>0} $ and $ \chi $ is an even Dirichlet character modulo $ N $ (e.g., see \cite{shimura}). We show that $ S_m(N,\chi) $ coincides with $ S_m\left(\bfGamma_0(N),\chi\right) $, where $ \bfGamma_0(N) $ is an appropriate discrete subgroup of $ \Met $, and $ \chi $ is identified with a suitable character of $ \bfGamma_0(N) $. Corollary \ref{cor:066} gives a formula for the action of Hecke operators $ T_{p^2,m,\chi} $, for prime numbers $ p\nmid N $, on cusp forms $ \Delta_{\bfGamma_0(N),k,m,\xi,\chi} $ in terms of their expansion in a series of classical Poincar\' e series (cf.\ \cite[Lemma 5-8]{MuicCurves}).

Let us mention that a non-re\-pre\-sen\-ta\-tion-the\-o\-re\-tic proof of Proposition \ref{prop:026} and Theorem \ref{thm:048} in the case when $ k=0 $ can be obtained by adapting the proof of \cite[Theorem 6.3.3]{miyake} to half-integral weights. The case when $ k\in\bbZ_{>0} $ can be derived from it essentially by taking the $ k $th derivative (the details can be gleaned from the first sentence of the proof of Proposition \ref{prop:026} and from Lemma \ref{lem:084}). Similarly, the integral formula \eqref{eq:083} can be deduced from the half-integral weight variant of \cite[Theorem 6.2.2]{miyake}; the integral-weight variant of \eqref{eq:083} for $ k=0 $ is actually used in the proof of \cite[Theorem 6.3.3]{miyake} (see the last equality on \cite[pg.~230]{miyake}).   
On the other hand, our results on the non-vanishing of cusp forms $ P_{\Gamma,\chi} f_{k,m} $ are based on applying the integral criterion \cite[Lemma 2-1]{MuicIJNT} to the corresponding Poincar\' e series on $ \Met $. To do that, we used the Cartan decomposition of $ \Met $, which is not easily accessible when working directly in $ S_m(\Gamma,\chi) $.  

\vspace{3mm}
This paper grew out of my PhD thesis. I would like to thank my advisor, Goran Mui\' c, for his support, encouragement, and many discussons. I am also grateful to Neven Grbac and Marcela Hanzer for their continued support and useful comments.

\section{Preliminaries on the metaplectic group}\label{sec:069}

Let $ \sqrt\spacedcdot:\bbC\to\bbC $ be the branch of the complex square root with values in $ \left\{z\in\bbC:\Re(z)>0\right\}\cup\left\{z\in\bbC:\Re(z)=0,\Im(z)\geq0\right\} $. We write $ i:=\sqrt{-1} $ and 
\begin{equation}\label{eq:091}
z^m:=\left(\sqrt z\right)^{2m},\qquad z\in\bbC^\times,\ m\in\frac12+\bbZ. 
\end{equation}
Next, we define $ \calH:=\left\{z\in\bbC:\Im(z)>0\right\} $ and denote by $ \Hol(\calH) $ the space of all holomorphic functions $ \calH\to\bbC $. 

The group $ \SL2(\bbR) $ acts on $ \bbC\cup\{\infty\} $ by
\[ g.z:=\frac{az+b}{cz+d},\qquad g=\begin{pmatrix}a&b\\c&d\end{pmatrix}\in\SL2(\bbR),\ z\in\bbC\cup\{\infty\}. \]
We have
\begin{equation}\label{eq:056}
	\Im(g.z)=\frac{\Im(z)}{\abs{cz+d}^2},\qquad g=\begin{pmatrix}a&b\\c&d\end{pmatrix}\in\SL2(\bbR),\ z\in\calH. 
\end{equation}
For every $ N\in\bbZ_{>0} $, we denote
\begin{align*}
	\Gamma_0(N)&:=\left\{\begin{pmatrix}a&b\\c&d\end{pmatrix}\in\SL2(\bbZ):c\equiv0\pmod N\right\},\\
	\Gamma_1(N)&:=\left\{\begin{pmatrix}a&b\\c&d\end{pmatrix}\in\SL2(\bbZ):c\equiv0,\ a\equiv d\equiv1\pmod N\right\},\\
	\Gamma(N)&:=\left\{\begin{pmatrix}a&b\\c&d\end{pmatrix}\in\SL2(\bbZ):b\equiv c\equiv0,\ a\equiv d\equiv1\pmod N\right\}.
\end{align*}

The group
\[ \Met:=\left\{\sigma=\left(g_\sigma=\begin{pmatrix}a&b\\c&d\end{pmatrix},\eta_\sigma\right)\in\SL2(\bbR)\times\Hol(\calH):\eta_\sigma^2(z)=cz+d\text{ for all }z\in\calH\right\} \]
with multiplication law
\begin{equation}\label{eq:075}
\sigma_1\sigma_2:=\left(g_{\sigma_1}g_{\sigma_2},\eta_{\sigma_1}(g_{\sigma_2}.z)\eta_{\sigma_2}(z)\right),\qquad\sigma_1,\sigma_2\in\Met,
\end{equation}
acts on $ \bbC\cup\{\infty\} $ by
\[ \sigma.z:=g_\sigma.z,\qquad \sigma\in\Met,\ z\in\bbC\cup\{\infty\}, \]
and, for every $ m\in\frac12+\bbZ_{\geq0} $, on the right on $ \bbC^\calH $ by
\[ \left(f\big|_m\sigma\right)(z):=f(\sigma.z)\eta_\sigma(z)^{-2m},\qquad z\in\calH,\ f\in\bbC^\calH,\ \sigma\in\Met. \]

In the following, we use shorthand notation $ (g_\sigma,\eta_\sigma(i)) $ for elements $ \sigma=(g_\sigma,\eta_\sigma) $ of $ \Met $. $ \Met $ is a connected Lie group with a smooth (Iwasawa) parametrization $ \bbR\times\bbR_{>0}\times\bbR\to\Met $,
\begin{equation}\label{eq:001}
(x,y,t)\mapsto\left(\begin{pmatrix}1&x\\0&1\end{pmatrix},1\right)\left(\begin{pmatrix}y^{\frac12}&0\\0&y^{-\frac12}\end{pmatrix},y^{-\frac14}\right)\left(\begin{pmatrix}\cos t&-\sin t\\\sin t&\cos t\end{pmatrix},e^{i\frac t2}\right).
\end{equation} 
The projection $ P:\Met\to\SL2(\bbR) $ onto the first coordinate is a smooth covering homomorphism of degree $ 2 $. The center of $ \Met $ is $ Z\left(\Met\right):=P^{-1}\left(\left\{\pm 1\right\}\right)\cong(\bbZ/4\bbZ,+) $.

We will denote the three factors on the right-hand side of \eqref{eq:001}, from left to right, by $ n_x $, $ a_y $, and $ \kappa_t $. We also define
\[ h_t:=\left(\begin{pmatrix}e^t&0\\0&e^{-t}\end{pmatrix},e^{-\frac t2}\right),\qquad t\in\bbR_{\geq0}. \]
Next, we recall the $ \SL2(\bbR) $-invariant Radon measure $ v $ on $ \calH $ defined by $ dv(x+iy):=\frac{dx\,dy}{y^2} $, $ x\in\bbR$, $ y\in\bbR_{>0} $, and fix the following Haar measure on $ \Met $: for $ \varphi\in C_c\left(\Met\right) $,
\begin{align}
\int_{\Met}\varphi\,d\mu_{\Met}:=&\frac1{4\pi}\int_0^{4\pi}\int_\ScptH \varphi\left(n_xa_y\kappa_t\right)\,dv(x+iy)\,dt\label{eq:035}\\ 
\,=&\frac1{4\pi}\int_0^{4\pi}\int_0^\infty\int_0^{4\pi}\varphi\left(\kappa_{\theta_1}h_t\kappa_{\theta_2}\right)\sinh(2t)\,d\theta_1\,dt\,d\theta_2.\label{eq:008}
\end{align}
Furthermore, for a discrete subgroup $ \Gamma $ of $ \Met $, let $ \mu_{\Gamma\backslash\Met} $ be the unique Radon measure on $ \Gamma\backslash\Met $ such that, for all $  \varphi\in C_c\left(\Met\right) $,
\[ \int_{\Gamma\backslash\Met}\sum_{\gamma\in\Gamma}\varphi(\gamma\sigma)\,d\mu_{\Gamma\backslash\Met}(\sigma)=\int_\Met \varphi\,d\mu_\Met. \]
Equivalently, for all $ \varphi\in C_c\left(\Gamma\backslash\Met\right) $,
\begin{equation}\label{eq:046}
\int_{\Gamma\backslash\Met}\varphi\,d\mu_{\Gamma\backslash\Met}=\frac1{4\pi\varepsilon_\Gamma}\int_0^{4\pi}\int_{\Gamma\backslash\calH} \varphi\left(n_xa_y\kappa_t\right)\,dv(x+iy)\,dt,
\end{equation}
where $ \varepsilon_\Gamma:=\abs{\Gamma\cap Z\left(\Met\right)} $. For every $ p\in\bbR_{\geq1} $, we define $ L^p\left(\Met\right) $ and $ L^p\left(\Gamma\backslash\Met\right) $ using $ \mu_\Met $ and $ \mu_{\Gamma\backslash\Met} $,  respectively. 

We identify the Lie algebra $ \frakg:=\Lie\left(\Met\right) $ with $ \Lie\left(\SL2(\bbR)\right)\equiv\fraksl_2(\bbR) $ via the differential of $ P $ at $ 1 $ and extend this identification to that of the universal enveloping algebras of their complexifications: $ U\left(\frakg_\bbC\right)\equiv U\left(\fraksl_2(\bbC)\right) $. Now,
\[ k^\circ:=\begin{pmatrix}0&-i\\i&0\end{pmatrix},\quad n^+:=\frac12\begin{pmatrix}1&i\\i&-1\end{pmatrix},\quad n^-:=\frac12\begin{pmatrix}1&-i\\-i&-1\end{pmatrix} \]
form a standard basis of $ \frakg_\bbC $ (we have $ \left[k^\circ,n^+\right]=2n^+ $, $ \left[k^\circ,n^-\right]=-2n^- $, and $ \left[n^+,n^-\right]=k^\circ $), and
\[ \calC:=\frac12\left(k^\circ\right)^2+n^+n^-+n^-n^+ \]
generates the center of $ U(\frakg_\bbC) $. We will need the formulae \cite[(2-13)--(2-14)]{ZunarNonVan2017} giving the action of $ \calC $ and $ n^+ $ as left-invariant differential operators on $ C^\infty\left(\Met\right) $ in Iwasawa coordinates:
\begin{align}
\mathcal C&=2y^2\left(\frac{\partial^2}{\partial x^2}+\frac{\partial^2}{\partial y^2}\right)+2y\frac{\partial^2}{\partial x\,\partial t},\label{eq:036}\\
n^+&=iye^{-2it}\left(\frac\partial{\partial x}-i\frac\partial{\partial y}\right)+\frac i2e^{-2it}\frac{\partial}{\partial t}.\label{eq:011}
\end{align}
$ n^- $ acts as the complex conjugate of $ n^+ $:
\begin{equation}\label{eq:037}
n^-=-iye^{2it}\left(\frac\partial{\partial x}+i\frac\partial{\partial y}\right)-\frac i2e^{2it}\frac{\partial}{\partial t}.
\end{equation}

$ K:=\{\kappa_t:t\in\bbR\} $ is a maximal compact subgroup of $ \Met $. It is isomorphic to $ (\bbR/4\pi\bbZ,+) $ via $ \kappa_t\mapsto t+4\pi\bbZ $. Its unitary dual consists of the characters $ \chi_n $, $ n\in\frac12\bbZ $, defined by
\[ \chi_n(\kappa_t):=e^{-int},\qquad t\in\bbR. \]
We say that a function $ F:\Met\to\bbC $ transforms on the left (resp., on the right) as $ \chi_n $ if $ F(\kappa\sigma)=\chi_n(\kappa)F(\sigma) $ (resp., $ F(\sigma\kappa)=F(\sigma)\chi_n(\kappa) $) for all $ \kappa\in K $ and $ \sigma\in\Met $. 

\section{Preliminaries on cusp forms of half-integral weight}

Let $ m\in\frac32+\bbZ_{\geq0} $. Let $ \Gamma $ be a discrete subgroup of finite covolume in $ \Met $. We denote by $ S_m(\Gamma) $ the space of cusp forms for $ \Gamma $ of weight $ m $, i.e., the space of all $ f\in\Hol(\calH) $ that satisfy
\begin{equation}\label{eq:003}
f\big|_m\gamma=f,\qquad\gamma\in\Gamma,
\end{equation}
and vanish at every cusp of $ P(\Gamma) $. Let us explain the last condition. For a cusp $ x $ of $ P(\Gamma) $, let $ \sigma\in\Met $ such that $ \sigma.\infty=x $. Then, it follows from \cite[Theorem 1.5.4.(2)]{miyake} that
\[ Z\left(\Met\right)\sigma^{-1}\Gamma_x\sigma=Z\left(\Met\right)\left<n_h\right> \]
for some $ h\in\bbR_{>0} $, hence $ f\big|_m\sigma $ has a Fourier expansion of the form
\[ \left(f\big|_m\sigma\right)(z)=\sum_{n\in\bbZ}a_ne^{\pi in\frac z{2h}},\qquad z\in\calH. \]
We say that $ f $ vanishes at $ x $ if $ a_n=0 $ for all $ n\in\bbZ_{\leq0} $. 

Next, we recall the half-integral weight variant of \cite[Theorems 2.1.5 and 6.3.1]{miyake}:

\begin{Lem}\label{lem:038}
	Let $ f\in\Hol(\calH) $ such that \eqref{eq:003} holds. Then, the following claims are equivalent:
	\begin{enumerate}
		\item $ f\in S_m(\Gamma) $.
		\item $ \sup_{z\in\calH}\abs{f(z)\Im(z)^{\frac m2}}<\infty $.
		\item $ \int_{\Gamma\backslash\calH}\abs{f(z)\Im(z)^{\frac m2}}^2\,dv(z)<\infty $.
	\end{enumerate}
\end{Lem}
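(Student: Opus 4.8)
The plan is to prove the cycle $(1)\Rightarrow(2)\Rightarrow(3)\Rightarrow(1)$, organized throughout by the auxiliary function $\phi(z):=f(z)\Im(z)^{\frac m2}$. From \eqref{eq:056} and the defining relation $\eta_\gamma^2(z)=cz+d$ one checks that $\abs\phi$ is $\Gamma$-invariant on $\calH$: for $\gamma\in\Gamma$ with $g_\gamma=\begin{pmatrix}a&b\\c&d\end{pmatrix}$, the relation $f\big|_m\gamma=f$ gives $\abs{f(\gamma.z)}=\abs{f(z)}\abs{cz+d}^m$, while $\Im(\gamma.z)^{\frac m2}=\Im(z)^{\frac m2}\abs{cz+d}^{-m}$, and the two factors cancel. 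More generally, for any $\sigma\in\Met$ I will use the change-of-variables identity $\abs{\phi(\sigma.z)}=\abs{(f\big|_m\sigma)(z)}\,\Im(z)^{\frac m2}$ together with the $\SL2(\bbR)$-invariance of $dv$; this lets me pass freely between $f$ near a cusp $x$ and the function $g:=f\big|_m\sigma$ (where $\sigma.\infty=x$), whose Fourier expansion $g(z)=\sum_{n\in\bbZ}a_ne^{\pi inz/(2h)}$ is the one recalled before the lemma.

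For $(1)\Rightarrow(2)$: a discrete subgroup of finite covolume admits a fundamental region that is the union of a relatively compact piece and finitely many cuspidal sectors, one for each cusp of $P(\Gamma)$. On the compact piece $\abs\phi$ is bounded by continuity. On the sector attached to a cusp $x$ I pass to $g=f\big|_m\sigma$; since $f$ vanishes at $x$, the expansion of $g$ runs only over $n\in\bbZ_{>0}$, so for $\Im z=y\geq Y_0$ one gets $\abs{g(z)}\leq\sum_{n\geq1}\abs{a_n}e^{-\pi ny/(2h)}\leq Ce^{-\pi y/(2h)}$, where $C:=e^{\pi Y_0/(2h)}\sum_{n\geq1}\abs{a_n}e^{-\pi nY_0/(2h)}<\infty$ by absolute convergence of the expansion at $y=Y_0$. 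Hence $\abs{g(z)}\Im(z)^{\frac m2}\leq Ce^{-\pi y/(2h)}y^{\frac m2}$ stays bounded as $y\to\infty$, and the $\Gamma$-invariance of $\abs\phi$ promotes these local bounds to a global one, proving (2). The implication $(2)\Rightarrow(3)$ is immediate, since $\int_{\Gamma\backslash\calH}\abs\phi^2\,dv\leq\big(\sup_\calH\abs\phi\big)^2\,v(\Gamma\backslash\calH)<\infty$, the covolume being finite.

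The real work is $(3)\Rightarrow(1)$, and this is the step I expect to be the main obstacle, since it is where holomorphy and the precise weight hypothesis $m\in\frac32+\bbZ_{\geq0}$ must be used to kill the non-positive Fourier modes. Fix a cusp $x$ and set $g=f\big|_m\sigma$; here $g$ has period $4h$ in $\Re z$ (its characters being $e^{\pi inz/(2h)}$, $n\in\bbZ$), while the stabilizer of the cusp acts on $\calH$ by $z\mapsto z+h$. For $Y_0$ large the strip $\{0\leq\Re z<4h,\ \Im z>Y_0\}$ maps into $\Gamma\backslash\calH$ with multiplicity at most $4$ under $z\mapsto\sigma.z$, so by the change-of-variables identity and the finiteness in (3),
\[ \int_{Y_0}^\infty\!\!\int_0^{4h}\abs{g(x+iy)}^2\,y^{m-2}\,dx\,dy<\infty. \]
Parseval in $x$ over the period $4h$, together with Tonelli, turns the left-hand side into $4h\sum_{n\in\bbZ}\abs{a_n}^2\int_{Y_0}^\infty e^{-\pi ny/h}y^{m-2}\,dy$, so every summand with $a_n\neq0$ must be finite. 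For $n<0$ the integrand grows like $e^{\pi\abs ny/h}y^{m-2}$ and the integral diverges, forcing $a_n=0$; for $n=0$ the integral is $\int_{Y_0}^\infty y^{m-2}\,dy$, which diverges \emph{precisely} because $m-2\geq-\tfrac12>-1$, forcing $a_0=0$. Thus $a_n=0$ for all $n\leq0$, i.e.\ $f$ vanishes at $x$; as $x$ ranges over all cusps, $f\in S_m(\Gamma)$. The only points requiring care are the reduction-theory inputs (the compact-plus-sectors fundamental region, and the injectivity of the high cuspidal strip into $\Gamma\backslash\calH$), which are standard for finite covolume, and the bookkeeping of the period $4h$ of $g$ against the $h$-periodicity of $\abs g$.
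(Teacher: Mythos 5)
Your proof is correct, and it is essentially the proof the paper intends: Lemma \ref{lem:038} is stated without proof as ``the half-integral weight variant of [Miyake, Theorems 2.1.5 and 6.3.1]'', and your argument (Fourier expansion at each cusp, exponential decay for $(1)\Rightarrow(2)$, finite covolume for $(2)\Rightarrow(3)$, Parseval on a high horocyclic strip for $(3)\Rightarrow(1)$) is exactly the standard adaptation being invoked. You also handle correctly the two places where the half-integral weight actually matters: the period of $g=f\big|_m\sigma$ is $4h$ rather than $h$, because $\sigma^{-1}\Gamma_x\sigma$ is only contained in $Z\left(\Met\right)\left<n_h\right>$, so the first pure translation it contains is $n_{4h}=(\zeta n_h)^4$ with $\zeta^4=1$; and the vanishing of $a_0$ in $(3)\Rightarrow(1)$, which rests on $m-2>-1$, i.e.\ $m\geq\frac32$.
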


More generally, let $ \chi:\Gamma\to\bbC^\times $ be a character of finite order. $ S_m(\Gamma,\chi) $ is defined as the space of all $ f\in S_m(\ker\chi) $ that satisfy
\begin{equation}\label{eq:051}
f\big|_m\gamma=\chi(\gamma)f,\qquad\gamma\in\Gamma. 
\end{equation}
Clearly, $ S_m(\Gamma)=S_m(\Gamma,1) $. $ S_m(\Gamma,\chi) $ is a finite-dimensional Hilbert space under the Petersson inner product
\begin{equation}\label{eq:055}
\scal{f_1}{f_2}_{\Gamma}:=\varepsilon_\Gamma^{-1}\int_{\Gamma\backslash\calH}f_1(z)\overline{f_2(z)}\Im(z)^m\,dv(z),\qquad f_1,f_2\in S_m(\Gamma,\chi). 
\end{equation}

The orthogonal projection $ S_m(\ker\chi)\twoheadrightarrow S_m(\Gamma,\chi) $ is given by
\begin{equation}\label{eq:032}
f\mapsto\sum_{\gamma\in\ker\chi\backslash\Gamma}\overline{\chi(\gamma)}f\big|_m\gamma,\qquad f\in S_m(\ker\chi). 
\end{equation}
We record another basic fact in the following lemma.
\begin{Lem}\label{lem:053}
	Let $ \sigma\in\Met $. Then, $ f\mapsto f\big|_m\sigma $ defines a unitary isomorphism $ S_m(\Gamma,\chi)\to S_m\left(\sigma^{-1}\Gamma\sigma,\chi^\sigma\right) $, where $ \chi^\sigma:=\chi\left(\sigma\spacedcdot\sigma^{-1}\right) $.
\end{Lem}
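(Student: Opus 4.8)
The plan is to reduce the entire statement to the cocycle property of the slash operator and then settle unitarity by a single change of variables in the Petersson integral. The first step is to record the identity
\[ (f\big|_m\sigma_1)\big|_m\sigma_2 = f\big|_m(\sigma_1\sigma_2),\qquad \sigma_1,\sigma_2\in\Met,\ f\in\bbC^\calH, \]
together with $ f\big|_m 1 = f $. This follows at once from the multiplication law \eqref{eq:075}, which says precisely that $ \eta_{\sigma_1\sigma_2}(z) = \eta_{\sigma_1}(\sigma_2.z)\,\eta_{\sigma_2}(z) $, and from the observation that $ -2m $ is an integer (as $ m\in\frac32+\bbZ_{\geq0} $), so that $ w\mapsto w^{-2m} $ is multiplicative and the branch convention \eqref{eq:091} causes no trouble. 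Granting this, well-definedness of the transformation law is immediate: for $ \gamma'=\sigma^{-1}\gamma\sigma $ with $ \gamma\in\Gamma $ I would compute $ (f\big|_m\sigma)\big|_m\gamma'=f\big|_m(\gamma\sigma)=(f\big|_m\gamma)\big|_m\sigma=\chi(\gamma)\,(f\big|_m\sigma) $, and since $ \chi^\sigma(\gamma')=\chi(\sigma\gamma'\sigma^{-1})=\chi(\gamma) $, this is exactly \eqref{eq:051} for the group $ \sigma^{-1}\Gamma\sigma $ and the character $ \chi^\sigma $. Holomorphy of $ f\big|_m\sigma $ is clear, since $ z\mapsto\sigma.z $ is holomorphic and $ \eta_\sigma $ is holomorphic and nonvanishing on $ \calH $.

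Next I would handle membership in $ S_m(\ker\chi^\sigma) $, i.e.\ the vanishing at cusps. The same character computation gives $ \ker\chi^\sigma=\sigma^{-1}(\ker\chi)\sigma $, so $ P(\ker\chi^\sigma)=g_\sigma^{-1}P(\ker\chi)g_\sigma $ and the cusps of $ P(\ker\chi^\sigma) $ are exactly the points $ g_\sigma^{-1}.x $ with $ x $ a cusp of $ P(\ker\chi) $. If $ x' $ is such a cusp and $ \tau\in\Met $ satisfies $ \tau.\infty=x' $, then $ \sigma\tau.\infty=g_\sigma.x'=:x $ is a cusp of $ P(\ker\chi) $, and $ (f\big|_m\sigma)\big|_m\tau=f\big|_m(\sigma\tau) $; because $ f\in S_m(\ker\chi) $ vanishes at $ x $, the Fourier expansion of $ f\big|_m(\sigma\tau) $ is supported on positive frequencies, which is precisely the condition that $ f\big|_m\sigma $ vanish at $ x' $. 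Thus $ f\big|_m\sigma\in S_m(\sigma^{-1}\Gamma\sigma,\chi^\sigma) $. Bijectivity then costs nothing: $ g\mapsto g\big|_m\sigma^{-1} $ maps $ S_m(\sigma^{-1}\Gamma\sigma,\chi^\sigma) $ back to $ S_m(\Gamma,\chi) $ (using $ (\chi^\sigma)^{\sigma^{-1}}=\chi $) and is a two-sided inverse by the cocycle identity and $ f\big|_m 1=f $; linearity is obvious.

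It remains to prove unitarity, and here I would compute directly. Since $ Z(\Met) $ is central, $ \sigma^{-1}\Gamma\sigma\cap Z(\Met)=\sigma^{-1}(\Gamma\cap Z(\Met))\sigma $, so $ \varepsilon_{\sigma^{-1}\Gamma\sigma}=\varepsilon_\Gamma $. In the integral \eqref{eq:055} defining $ \scal{f_1\big|_m\sigma}{f_2\big|_m\sigma}_{\sigma^{-1}\Gamma\sigma} $ I would substitute $ w=\sigma.z=g_\sigma.z $; by equivariance this carries a fundamental domain for $ \sigma^{-1}\Gamma\sigma $ to one for $ \Gamma $, and $ dv $ is $ \SL2(\bbR) $-invariant. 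Writing $ g_\sigma=\begin{pmatrix}a&b\\c&d\end{pmatrix} $ and using $ |\eta_\sigma(z)|^2=|cz+d| $, the modulus factor is $ \eta_\sigma(z)^{-2m}\,\overline{\eta_\sigma(z)^{-2m}}=|\eta_\sigma(z)|^{-4m}=|cz+d|^{-2m} $, while \eqref{eq:056} yields $ \Im(z)^m=|cz+d|^{2m}\Im(w)^m $; the two cancel, leaving the integrand $ f_1(w)\overline{f_2(w)}\Im(w)^m $. Hence the two inner products coincide, proving unitarity.

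I do not expect a deep obstacle here; the most delicate point is the cusp condition, because the definition of vanishing at a cusp passes through a choice of scaling element and a Fourier expansion, so one must check that the bijection $ x'\mapsto g_\sigma.x' $ on cusps transports the vanishing condition. The cocycle identity of the first step is what makes this transfer, as well as the transformation law and the bijectivity, essentially automatic.
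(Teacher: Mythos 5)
Your proof is correct. The paper states Lemma \ref{lem:053} as a basic fact and offers no proof at all, so there is nothing to compare against; your argument --- the cocycle identity $(f\big|_m\sigma_1)\big|_m\sigma_2=f\big|_m(\sigma_1\sigma_2)$ derived from the multiplication law \eqref{eq:075} (valid because $-2m$ is an integer), the transport of the cusp condition via $x'\mapsto g_\sigma.x'$, the observation $\varepsilon_{\sigma^{-1}\Gamma\sigma}=\varepsilon_\Gamma$, and the cancellation of $\abs{cz+d}^{\pm 2m}$ in the Petersson integral after substituting $w=\sigma.z$ --- is exactly the standard verification the author is implicitly invoking.
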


The main results of this paper concern elements of $ S_m(\Gamma,\chi) $ constructed in the form of a Poincar\' e series 
\[ P_{\Lambda\backslash\Gamma,\chi}f:=\sum_{\gamma\in\Lambda\backslash\Gamma}\overline{\chi(\gamma)}f\big|_m\gamma,
 \]where $ \Lambda $ is a subgroup of $ \Gamma $, and $ f:\calH\to\bbC $ satisfies $ f\big|_m\lambda=\chi(\lambda)f $ for all $ \lambda\in\Lambda $. We write $ P_{\Gamma,\chi} f:=P_{\{1\}\backslash\Gamma,\chi}f $ and $ P_\Gamma f:=P_{\Gamma,1}f $. 

\section{Some representation-theoretic results}

Throughout this section, let $ m\in\frac32+\bbZ_{\geq0} $.

Let $ r $ be the right regular representation of $ \Met $. For a discrete subgroup $ \Gamma $ of $ \Met $, let $ r_\Gamma $ be the unitary representation of $ \Met $ by right translations in $ L^2\left(\Gamma\backslash\Met\right) $.

\begin{Lem}\label{lem:007}
	\begin{enumerate}
		 \item There exists a unique (up to unitary equivalence) irreducible unitary representation $ \overline{\pi_m} $ of $ \Met $ that decomposes, as a representation of $ K $, into the orthogonal sum $ \bigoplus_{k\in\bbZ_{\geq0}}\chi_{m+2k} $. \label{lem:007:1}
		 \item Let $ v $ be a non-zero element of the $ \chi_m $-isotypic component of $ \overline{\pi_m} $. Then, $ \overline{\pi_m}\left(n^-\right)v=0 $, and for every $ k\in\bbZ_{\geq0} $ $ \overline{\pi_m}\left(n^+\right)^kv $ spans the $ \chi_{m+2k} $-isotypic component of $ \overline{\pi_m} $.\label{lem:007:2}
	\end{enumerate}
\end{Lem}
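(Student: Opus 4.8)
### Proof Proposal for Lemma \ref{lem:007}

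The plan is to prove both claims using the representation theory of $ \frakg_\bbC = \fraksl_2(\bbC) $ acting on $ K $-finite vectors, exploiting the fact that $ K \cong (\bbR/4\pi\bbZ, +) $ and its characters $ \chi_n $ are indexed by $ n \in \frac12\bbZ $. The key structural input is the commutation relations $ [k^\circ, n^+] = 2n^+ $ and $ [k^\circ, n^-] = -2n^- $, which tell us that $ n^+ $ and $ n^- $ are raising and lowering operators for the $ K $-weight. Specifically, if $ v $ lies in the $ \chi_n $-isotypic component (so $ \overline{\pi_m}(k^\circ)v = nv $ in the appropriate normalization), then $ \overline{\pi_m}(n^+)v $ lies in the $ \chi_{n+2} $-component and $ \overline{\pi_m}(n^-)v $ lies in the $ \chi_{n-2} $-component.

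\textbf{Existence and the structure of $ \overline{\pi_m} $ (part \ref{lem:007:2} first).} I would first address the internal structure, since it drives the uniqueness argument. Let $ v \neq 0 $ lie in the $ \chi_m $-isotypic component. The element $ \overline{\pi_m}(n^-)v $ would lie in the $ \chi_{m-2} $-component; but since the $ K $-decomposition is $ \bigoplus_{k \geq 0} \chi_{m+2k} $, there is no $ \chi_{m-2} $-component (as $ m - 2 \notin \{m+2k : k \geq 0\} $), forcing $ \overline{\pi_m}(n^-)v = 0 $. This identifies $ v $ as a lowest-weight vector. Then I would show by induction on $ k $ that $ \overline{\pi_m}(n^+)^k v $ is a nonzero vector spanning the one-dimensional $ \chi_{m+2k} $-component. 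Nonvanishing follows from the standard $ \fraksl_2 $-computation: applying $ n^- $ repeatedly to $ \overline{\pi_m}(n^+)^k v $ and using $ [n^+, n^-] = k^\circ $ together with $ \overline{\pi_m}(n^-)v = 0 $ recovers a nonzero multiple of $ v $, so no vector in the chain can vanish. The one-dimensionality of each isotypic component must come from irreducibility: the span of $ \{\overline{\pi_m}(n^+)^k v\}_{k \geq 0} $ is $ \frakg_\bbC $-stable and $ K $-stable, hence dense, so it exhausts the $ K $-types, each of which then has multiplicity one.

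\textbf{Uniqueness and existence (part \ref{lem:007:1}).} For uniqueness, given two such representations, part \ref{lem:007:2} shows each is generated by a lowest-weight vector on which the $ \frakg_\bbC $-action is completely determined by the $ \fraksl_2 $-relations and the scalar by which the Casimir $ \calC $ acts; matching lowest $ K $-types and Casimir eigenvalues yields an intertwining isomorphism of the underlying $ (\frakg, K) $-modules, which integrates to a unitary equivalence of the unitary representations by standard Harish-Chandra theory for this linear reductive (metaplectic) group. For existence, I would exhibit $ \overline{\pi_m} $ concretely: it is the genuine discrete-series-type (integrable) representation of $ \Met $ whose construction and integrability were already used in \cite{ZunarNonVan2017}, realized on a weighted Bergman-type space of holomorphic functions on $ \calH $ with the $ \big|_m $-action, whose $ K $-decomposition is exactly $ \bigoplus_{k \geq 0} \chi_{m+2k} $. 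Since the paper's framework already presupposes these representations, I would cite that construction rather than rebuild it.

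\textbf{Main obstacle.} The main subtlety is the passage from the purely algebraic $ (\frakg, K) $-module statement to the claimed uniqueness \emph{up to unitary equivalence} of the \emph{unitary} representations: one must know that the abstract unitarizable $ (\frakg, K) $-module with this $ K $-spectrum and Casimir eigenvalue is irreducible, admissible, and that its Harish-Chandra module determines the global unitary representation uniquely. This requires invoking admissibility of irreducible unitary representations of $ \Met $ (a linear group in the sense that it is a finite cover of a real reductive group) and the fact that two irreducible unitary representations with isomorphic $ (\frakg, K) $-modules are unitarily equivalent. I would handle this by citing the general theory rather than proving it, so the genuinely new content reduces to the elementary $ \fraksl_2 $ weight computation in part \ref{lem:007:2}.
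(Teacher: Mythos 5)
Your argument is correct and is in substance the one the paper relies on: the paper's own proof is a one-line citation of \cite{ZunarNonVan2017} (Lemma 3-5 there), whose content is precisely the lowest-weight $(\frakg,K)$-module computation you carry out ($n^-$ annihilates the $\chi_m$-vector because $\chi_{m-2}$ is absent, the $\fraksl_2$ recursion $n^-(n^+)^kv=-k(m+k-1)(n^+)^{k-1}v$ gives nonvanishing, and admissibility plus irreducibility give multiplicity one and uniqueness), with existence likewise deferred to the explicit holomorphic-discrete-series-type construction. One terminological caveat: $\Met$ is \emph{not} a linear group, but this is harmless, since Harish-Chandra's admissibility theorem and the fact that infinitesimal equivalence implies unitary equivalence hold for any connected semisimple Lie group with finite center, which $\Met$ is.
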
		 
		 
\begin{proof}
	\eqref{lem:007:1} is \cite[Lemma 3-5.(1)]{ZunarNonVan2017}, and \eqref{lem:007:2} is clear from the proof of \cite[Lemma 3-5]{ZunarNonVan2017}.
\end{proof}
		 
The following lemma is central to our proof of Theorem \ref{thm:017}.

\begin{Lem}\label{lem:006}
	Let $ \Gamma $ be a discrete subgroup of $ \Met $. 
	Suppose that $ \varphi\in C^\infty\left(\Gamma\backslash\Met\right)\cap L^2\left(\Gamma\backslash\Met\right) $, $ \varphi\not\equiv0, $ has the following properties:
	\begin{enumerate}
		\item $ \varphi $ transforms on the right as $ \chi_m $. \label{lem:005:1}
		\item $ \calC \varphi=m\left(\frac m2-1\right)\varphi $. \label{lem:005:2}
	\end{enumerate}
	Then, the minimal closed subrepresentation $ H $ of $ r_\Gamma $ containing $ \varphi $ is unitarily equivalent to $ \overline{\pi_m} $, and $ \varphi $ spans its $ \chi_m $-isotypic component.
\end{Lem}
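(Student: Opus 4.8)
The plan is to realize $\varphi$ as a lowest-weight vector of an irreducible $(\frakg,K)$-module and then invoke the uniqueness in Lemma \ref{lem:007}. Since $\varphi$ transforms on the right as $\chi_m$, the derived right action gives $k^\circ\varphi=m\varphi$ (recall that under our identification $\frakg\equiv\fraksl_2(\bbR)$ the element $k^\circ$ is $i$ times the generator $\frac{d}{dt}\big|_{t=0}\kappa_t$ of $\Lie(K)$, and $\chi_m(\kappa_t)=e^{-imt}$). Thus $\varphi$ lies in the $\chi_m$-isotypic space for the right $K$-action, and the whole argument reduces to understanding the $(\frakg,K)$-module $V:=U(\frakg_\bbC)\varphi$ generated by $\varphi$ inside $L^2\left(\Gamma\backslash\Met\right)$, whose closure will be the minimal closed subrepresentation $H$.

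Before manipulating $n^\pm\varphi$ in $L^2$, I must know that these derivatives stay in $L^2$ and that the adjointness relations for $r_\Gamma$ are available; this is the step I expect to be the main obstacle. The resolution is that $\varphi$ is a \emph{smooth vector} of $r_\Gamma$: being right-$K$-finite and an eigenfunction of $\calC$, it is an eigenfunction of the elliptic operator built from $\calC$ and the Casimir of $K$, and since all powers of $\calC$ act as scalars, $\varphi$ lies in every Sobolev space for the $\Met$-action, which is exactly the smooth-vector condition. Consequently $V$ consists of smooth $L^2$-vectors, $n^+$ and $n^-$ map $V$ into $L^2\left(\Gamma\backslash\Met\right)$, and, $\frakg$ being the real form $\fraksl_2(\bbR)$ with $\overline{n^+}=n^-$, one has $\scal{n^+\psi_1}{\psi_2}=-\scal{\psi_1}{n^-\psi_2}$ on smooth vectors. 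I would cite the standard automorphic-forms reference for the smooth-vector statement rather than reprove it.

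Granting this, the key computation is short. Writing $\calC=\frac12(k^\circ)^2+2n^+n^--k^\circ$ (using $[n^+,n^-]=k^\circ$) and substituting $k^\circ\varphi=m\varphi$ together with the Casimir hypothesis, the $(k^\circ)^2$ and $k^\circ$ terms reproduce $m\!\left(\tfrac m2-1\right)\varphi$ exactly, forcing $n^+n^-\varphi=0$. The adjoint relation then gives $\norm{n^-\varphi}^2=-\scal{n^+n^-\varphi}{\varphi}=0$, so $n^-\varphi=0$. Hence $\varphi$ is a lowest-weight vector: $k^\circ\varphi=m\varphi$ and $n^-\varphi=0$.

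It remains to run the $\fraksl_2$-module theory. By the Poincar\'e--Birkhoff--Witt theorem $V=\repspan_\bbC\left\{(n^+)^k\varphi:k\in\bbZ_{\geq0}\right\}$, with $(n^+)^k\varphi$ in the $\chi_{m+2k}$-isotypic space. A standard induction using $[n^+,n^-]=k^\circ$ yields $n^-(n^+)^k\varphi=-k(m+k-1)(n^+)^{k-1}\varphi$, and since $m\in\frac32+\bbZ_{\geq0}$ the coefficient $-k(m+k-1)$ never vanishes for $k\geq1$. This shows that all $(n^+)^k\varphi$ are non-zero and that $V$ has no proper non-zero submodule (a submodule is $k^\circ$-graded, hence contains its lowest weight vector, which by the weight grading must be a multiple of $\varphi$). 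Thus $V$ is irreducible with $K$-types $\chi_{m+2k}$, each of multiplicity one, and in particular admissible. Its closure is therefore the minimal closed subrepresentation $H$ containing $\varphi$, a topologically irreducible unitary subrepresentation of $r_\Gamma$ whose space of $K$-finite vectors is $V$; by the uniqueness in Lemma \ref{lem:007} we conclude $H\cong\overline{\pi_m}$. Finally $\varphi$ is a non-zero vector in the one-dimensional $\chi_m$-isotypic component of $H$, so it spans it, as claimed.
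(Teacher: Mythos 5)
Your argument is correct in substance and lands on the same core computation that the paper's proof alludes to: conditions (1)--(2) force $\varphi$ to be a lowest-weight vector ($k^\circ\varphi=m\varphi$, $n^+n^-\varphi=0$, hence $n^-\varphi=0$ by skew-adjointness) generating an irreducible $(\frakg,K)$-module with $K$-types $\bigoplus_{k\geq0}\chi_{m+2k}$, after which Lemma \ref{lem:007} finishes the job. The difference is in the functional-analytic scaffolding. The paper works top-down: it first invokes the $L^2(\Gamma\backslash G)$-version of Harish--Chandra's finiteness result \cite[Lemma 77]{hc} to write $H$ as a finite orthogonal sum of irreducibles, so that $H_K$ is automatically an admissible $(\frakg,K)$-module generated by $\varphi$ (via \cite{knappvogan}), and only then does the lowest-weight computation to see the sum has a single summand. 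You work bottom-up, building $V=U(\frakg_\bbC)\varphi$ explicitly, which buys you the concrete identity $n^-(n^+)^k\varphi=-k(m+k-1)(n^+)^{k-1}\varphi$ and irreducibility by inspection, but costs you the step you correctly identified as the obstacle: you must justify not merely that $\varphi$ is a smooth vector, but that the \emph{closure} of $V$ is $\Met$-invariant, topologically irreducible, and hence equal to $H$. Smoothness alone does not give this; the clean route is to observe that $\varphi$, being a $K$-finite eigenfunction of the elliptic operator obtained from $\calC$ and the Casimir of $K$, is real-analytic and an analytic vector, and then to quote Harish--Chandra's theorem that the closure of the $(\frakg,K)$-module generated by a $K$-finite analytic vector is invariant (and irreducible when the module is). With that reference supplied in place of your Sobolev-space sketch, your proof is complete and is a legitimate alternative to the paper's.
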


\begin{proof}
	\cite[Lemma 77]{hc} remains valid when the right regular representation of $ G $ is replaced by the representation of $ G $ by right translations in $ L^2\left(\Lambda\backslash G\right) $, where $ \Lambda $ is a discrete subgroup of $ G $. By this result, $ H $ is an orthogonal sum of finitely many closed irreducible $ \Met $-invariant subspaces. Hence, its $ (\frakg,K) $-module of $ K $-finite vectors, $ H_K $, is a direct sum of finitely many irreducible $ (\frakg,K) $-modules, and it is generated by $ \varphi $ (see \cite[Theorem 0.4]{knappvogan}). From this it follows by an elementary computation in $ H_K $, using \eqref{lem:005:1}--\eqref{lem:005:2}, that $ H_K $ is in fact an irreducible $ (\frakg,K) $-module and that it is isomorphic, as a $ K $-module, to $ \bigoplus_{k\in\bbZ_{\geq0}}\chi_{m+2k} $. Thus, $ H $ is unitarily equivalent to $ \overline{\pi_m} $ by Lemma \hyperref[lem:007:1]{\ref*{lem:007}.(\ref*{lem:007:1})}. Since $ \varphi\not\equiv0 $ belongs to its (one-dimensional) $ \chi_m $-isotypic component by \eqref{lem:005:1}, the second claim is clear.
\end{proof}
	
Next, we recall the classical lift of $ f:\calH\to\bbC $ to $ F_f:\Met\to\bbC $ defined by
\begin{equation}\label{eq:050}
F_f(\sigma):=\left(f\big|_m\sigma\right)(i),\qquad\sigma\in\Met,
\end{equation}
i.e., in Iwasawa coordinates,
\begin{equation}\label{eq:010}
F_f(n_xa_y\kappa_t)=f(x+iy)y^{\frac m2}e^{-imt},\qquad x,t\in\bbR,\ y\in\bbR_{>0}.
\end{equation}

The following result is well-known, but we could not find a convenient reference, so we provide a short proof.

\begin{Thm}\label{thm:005}
	Let $ \Gamma $ be a discrete subgroup of finite covolume in $ \Met $. Then, the lift $ f\mapsto F_f $ defines a unitary isomorphism $ S_m(\Gamma)\to\calA\left(\Gamma\backslash\Met\right)_m $, where $ \calA\left(\Gamma\backslash\Met\right)_m $ is the subspace of $ L^2\left(\Gamma\backslash\Met\right) $ consisting of all $ \varphi\in L^2\left(\Gamma\backslash\Met\right)\cap C^\infty\left(\Gamma\backslash\Met\right) $ with the following properties:
	\begin{enumerate}
		\item $ \varphi $ transforms on the right as $ \chi_m $.
		\item $ \mathcal C\varphi=m\left(\frac m2-1\right)\varphi $. 
	\end{enumerate}
	Every $ \varphi\in\calA\left(\Gamma\backslash\Met\right)_m $ is bounded.
\end{Thm}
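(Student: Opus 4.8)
The plan is to show that the lift lands in $\calA(\Gamma\backslash\Met)_m$, is isometric (hence injective), and is surjective, after which the boundedness assertion will drop out for free. Throughout I will repeatedly use the coordinate formula $F_f(n_xa_y\kappa_t)=f(x+iy)y^{m/2}e^{-imt}$ from \eqref{eq:010}, which holds for \emph{any} $f\in\bbC^\calH$ (its derivation uses only the definition of $|_m$ and the Iwasawa action, not holomorphicity).

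First I would check that $F_f\in\calA(\Gamma\backslash\Met)_m$ for $f\in S_m(\Gamma)$. Descent to $\Gamma\backslash\Met$ is immediate from $f|_m\gamma=f$ together with the cocycle law $f|_m(\gamma\sigma)=(f|_m\gamma)|_m\sigma$, and the right-$\chi_m$ transformation follows from $\kappa_t.i=i$ and $\eta_{\kappa_t}(i)=e^{it/2}$; smoothness is clear from \eqref{eq:010}. The essential point is the Casimir equation: substituting $F_f(n_xa_y\kappa_t)=f(x+iy)y^{m/2}e^{-imt}$ into \eqref{eq:036} and using that $f$ is holomorphic (so $(\partial_x^2+\partial_y^2)f=0$ and $\partial_y f=i\partial_x f$), the first-order terms cancel and one is left with $\calC F_f=m\!\left(\frac m2-1\right)F_f$. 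Finally, $|F_f(n_xa_y\kappa_t)|^2=|f(x+iy)|^2y^m$ is right-$K$-invariant, so \eqref{eq:046} gives $\|F_f\|_{L^2}^2=\varepsilon_\Gamma^{-1}\int_{\Gamma\backslash\calH}|f|^2\Im(z)^m\,dv=\scal ff_\Gamma$, which is finite by Lemma~\ref{lem:038}; the same computation with $f_1\overline{f_2}$ in place of $|f|^2$ gives $\scal{F_{f_1}}{F_{f_2}}=\scal{f_1}{f_2}_\Gamma$, so the lift is isometric, and in particular injective (alternatively, $f$ is recovered by $f(x+iy)=F_f(n_xa_y)y^{-m/2}$).

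For surjectivity, take $0\neq\varphi\in\calA(\Gamma\backslash\Met)_m$ and define $f(x+iy):=\varphi(n_xa_y)y^{-m/2}$, a smooth function on $\calH$; the right-$\chi_m$ property and \eqref{eq:010} give $F_f=\varphi$. The crux is holomorphicity of $f$. By Lemma~\ref{lem:006}, $\varphi$ generates a subrepresentation of $r_\Gamma$ equivalent to $\overline{\pi_m}$ and spans its $\chi_m$-isotypic component; by Lemma~\ref{lem:007} the operator $n^-$ annihilates such a (lowest-$K$-type) vector, and since $\varphi$ is thereby a smooth, $K$-finite vector, this abstract action coincides with the left-invariant differential operator \eqref{eq:037} applied to $\varphi$. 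A direct substitution of $\varphi=F_f$ into \eqref{eq:037} shows that $n^-\varphi=0$ is equivalent to $(\partial_x+i\partial_y)f=0$, i.e. $\partial_{\bar z}f=0$, so $f\in\Hol(\calH)$. That $f|_m\gamma=f$ holds follows because $F_{f|_m\gamma}=F_f$ (from the left-$\Gamma$-invariance of $\varphi$) and the lift is injective. Finally $\varphi\in L^2$ yields $\int_{\Gamma\backslash\calH}|f|^2\Im(z)^m\,dv<\infty$, so $f\in S_m(\Gamma)$ by Lemma~\ref{lem:038}, completing surjectivity. The boundedness claim then follows at once: each $\varphi\in\calA(\Gamma\backslash\Met)_m$ equals $F_f$ for some $f\in S_m(\Gamma)$, and $|F_f(n_xa_y\kappa_t)|=|f(x+iy)\,\Im(x+iy)^{m/2}|$ is bounded by Lemma~\ref{lem:038}.

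I expect the main obstacle to be surjectivity, specifically extracting holomorphicity of $f$ from the second-order condition $\calC\varphi=m\!\left(\frac m2-1\right)\varphi$. The Casimir eigenvalue equation, taken in isolation, only yields $n^+n^-\varphi=0$; upgrading this to the first-order equation $n^-\varphi=0$ (equivalently $\partial_{\bar z}f=0$) genuinely requires the unitary/representation-theoretic input of Lemmas~\ref{lem:006} and~\ref{lem:007}, which also ensures that $\varphi$ is a genuine smooth vector so that the abstract action of $n^-$ agrees with the differential operator \eqref{eq:037}. The remaining steps are routine computations with \eqref{eq:036}, \eqref{eq:037}, and \eqref{eq:046}.
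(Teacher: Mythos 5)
Your proposal is correct and follows essentially the same route as the paper: it defines the inverse map by $f(x+iy)=\varphi(n_xa_y)y^{-m/2}$, invokes Lemma~\ref{lem:006} and Lemma~\hyperref[lem:007:2]{\ref*{lem:007}.(\ref*{lem:007:2})} to get $n^-\varphi=0$ and hence holomorphicity via \eqref{eq:037}, and uses \eqref{eq:010}, \eqref{eq:046}, and Lemma~\ref{lem:038} for the isometry and boundedness claims. You merely spell out the ``elementary computation'' that the paper leaves to the reader, and you correctly identify the upgrade from $n^+n^-\varphi=0$ to $n^-\varphi=0$ as the point where the representation-theoretic input is genuinely needed.
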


\begin{proof}
	An elementary computation using \eqref{eq:050}, \eqref{eq:010}, \eqref{eq:046}, \eqref{eq:036}, and Lemma \ref{lem:038} shows that $ f\mapsto F_f $ is a well-defined isometry $ S_m(\Gamma)\to\calA\left(\Gamma\backslash\Met\right)_m $. To prove its surjectivity, let $ \varphi\in\calA\left(\Gamma\backslash\Met\right)_m $, $ \varphi\not\equiv0 $, and define $ f:\calH\to\bbC $, $ f(x+iy):=\varphi(n_xa_y)y^{-\frac m2} $. Obviously, $ f\in C^\infty(\calH) $ and $ F_f=\varphi $. Next, by Lemma \ref{lem:006} $ \varphi $ spans the $ \chi_m $-isotypic component of a closed subrepresentation of $ r_\Gamma $ that is unitarily equivalent to $ \overline{\pi_m} $. Thus, $ n^-\varphi=0 $ by Lemma \hyperref[lem:007:2]{\ref*{lem:007}.(\ref*{lem:007:2})}, so $ \left(\partial_x+i\partial_y\right)f=0 $ by \eqref{eq:037}, hence $ f $ is holomorphic. Furthermore, the fact that $ \varphi\in L^2\left(\Gamma\backslash\Met\right) $ implies that $ f $ satisfies \eqref{eq:003} and, by \eqref{eq:046}, that $ \int_{\Gamma\backslash\calH}\abs{f(z)\Im(z)^{\frac m2}}^2\,dv(z)<\infty $, so $ f\in S_m(\Gamma) $ by Lemma \ref{lem:038}. The same lemma implies that $ \sup_{z\in\calH}\abs{f(z)\Im(z)^{\frac m2}}<\infty $, so $ \varphi $ is bounded by \eqref{eq:010}.
\end{proof}

Next, let $ \Gamma $ be a discrete subgroup of finite covolume in $ \Met $, and let $ \chi $ be a character of $ \Gamma $ of finite order. For $ \varphi:\Met\to\bbC $, we define the Poincar\' e series
\[ \left(P_{\Gamma,\chi}\varphi\right)(\sigma):=\sum_{\gamma\in\Gamma}\overline{\chi(\gamma)}\varphi(\gamma\sigma),\qquad\sigma\in\Met. \]
We write $ P_\Gamma\varphi:=P_{\Gamma,1}\varphi $. The following lemma is elementary. Its proof is left to the reader.

\begin{Lem}\label{lem:021}
	Let $ f:\calH\to\bbC $. Then, the series $ P_{\Gamma,\chi}f $ converges absolutely (resp., absolutely and uniformly on compact sets) on $ \calH $ if and only if $ P_{\Gamma,\chi} F_f $ converges in the same way on $ \Met $, and in that case
	\begin{equation}\label{eq:022}
	F_{P_{\Gamma,\chi}f}=P_{\Gamma,\chi} F_f.
	\end{equation}
\end{Lem}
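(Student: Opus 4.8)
The plan is to push everything down to the level of individual summands, using that the classical lift intertwines the two Poincar\'e series termwise. First I would record the basic identity
\[ F_f(\gamma\sigma)=F_{f\big|_m\gamma}(\sigma),\qquad\gamma\in\Gamma,\ \sigma\in\Met, \]
which is immediate from the definition \eqref{eq:050} of the lift together with the fact that $ \big|_m $ is a right action of $ \Met $, so that $ f\big|_m(\gamma\sigma)=\left(f\big|_m\gamma\right)\big|_m\sigma $. Multiplying by $ \overline{\chi(\gamma)} $ and using linearity of $ f\mapsto F_f $ yields the termwise identity $ \overline{\chi(\gamma)}F_f(\gamma\sigma)=F_{\overline{\chi(\gamma)}f\big|_m\gamma}(\sigma) $, whose left-hand summand is exactly the $ \gamma $th term of $ P_{\Gamma,\chi}F_f $ and whose right-hand summand is the lift of the $ \gamma $th term of $ P_{\Gamma,\chi}f $.

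Next I would compare magnitudes. Writing $ \sigma=n_xa_y\kappa_t $ in the Iwasawa coordinates \eqref{eq:001} and applying \eqref{eq:010}, together with $ \abs{\chi(\gamma)}=1 $ (as $ \chi $ has finite order) and $ \abs{e^{-imt}}=1 $, I obtain
\[ \abs{\overline{\chi(\gamma)}F_f(\gamma\sigma)}=y^{\frac m2}\abs{\left(f\big|_m\gamma\right)(x+iy)},\qquad z=x+iy. \]
The crucial feature is that the factor $ y^{\frac m2} $ depends only on $ z=\sigma.i $, not on $ \gamma $ nor on the fibre coordinate $ t $. Hence, for each fixed $ z\in\calH $, the series of absolute values defining $ P_{\Gamma,\chi}F_f $ at any $ \sigma $ over $ z $ equals $ y^{\frac m2} $ times the series of absolute values defining $ P_{\Gamma,\chi}f $ at $ z $; one converges if and only if the other does. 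This settles the equivalence for absolute pointwise convergence. When it holds, summing the termwise identity over $ \gamma $ gives \eqref{eq:022}: for fixed $ \sigma $ the functional $ g\mapsto F_g(\sigma) $ is evaluation at $ \sigma.i $ scaled by the constant $ \eta_\sigma(i)^{-2m} $, which commutes with the convergent pointwise sum.

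For the locally uniform version I would use that the orbit map $ \pi\colon\Met\to\calH $, $ \sigma\mapsto\sigma.i $, is the quotient by the compact stabiliser $ K $ of $ i $ and is therefore proper, so that compact subsets of $ \calH $ are precisely the images of compact subsets of $ \Met $ and conversely pull back to compact sets. On any compact $ C\subseteq\calH $ the factor $ y^{\frac m2} $ is bounded above and below by positive constants, so the displayed magnitude relation converts a uniform Cauchy estimate for the tails of $ P_{\Gamma,\chi}f $ on $ C $ into one for the tails of $ P_{\Gamma,\chi}F_f $ on $ \pi^{-1}(C) $, and vice versa; since every compact subset of $ \Met $ lies in some $ \pi^{-1}(C) $, the two notions of local uniform convergence match, and \eqref{eq:022} again follows by summation. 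I expect no deep obstacle, as this is essentially bookkeeping; the only point demanding care is this compact-set correspondence, namely observing that $ \pi $ is proper (equivalently, that the coordinate $ y $ stays in a compact subinterval of $ \bbR_{>0} $ over a compact set in $ \calH $) so that the $ y^{\frac m2} $ factor is uniformly controlled, and that local uniform convergence on $ \Met $ is tested on sets of the form $ \pi^{-1}(C) $.
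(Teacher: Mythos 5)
Your proof is correct. The paper gives no argument for this lemma --- it states only that it is elementary and leaves the proof to the reader --- and your write-up is precisely the intended bookkeeping: the termwise identity $ F_f(\gamma\sigma)=F_{f\big|_m\gamma}(\sigma) $ (which uses that $ -2m\in\bbZ $ so the metaplectic cocycle $ \eta_{\gamma\sigma}=\eta_\gamma(\sigma.\spacedcdot)\eta_\sigma $ behaves multiplicatively under the power $ -2m $), the magnitude comparison via \eqref{eq:010} with the factor $ \Im(\sigma.i)^{\frac m2} $ depending only on $ \sigma.i $, and the properness of $ \sigma\mapsto\sigma.i $ to match the two notions of locally uniform convergence.
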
	

\section{Proof of Theorem \ref{thm:017}}

In the following lemma we recall some results of \cite{ZunarNonVan2017}.
		 
\begin{Lem}\label{lem:045}
	Let $ m\in\frac32+\bbZ_{\geq0} $ and $ k\in\bbZ_{\geq0} $.
	\begin{enumerate} 
		 \item We define $ f_{k,m}:\calH\to\bbC $,
		 \[ f_{k,m}(z):=(2i)^m\frac{(z-i)^k}{(z+i)^{m+k}}. \]
		 $ F_{k,m}:=F_{f_{k,m}} $ is a (unique up to a multiplicative constant) matrix coefficient of $ \overline{\pi_m} $ that transforms on the right as $ \chi_m $ and on the left as $ \chi_{m+2k} $.\label{lem:045:1} 
		 \item $ \calC F_{k,m}=m\left(\frac m2-1\right)F_{k,m} $.\label{lem:045:2}
		 \item We have
		 \[ F_{k,m}\left(\kappa_{\theta_1}h_t\kappa_{\theta_2}\right)=\chi_{m+2k}\left(\kappa_{\theta_1}\right)\frac{\tanh^k(t)}{\cosh^m(t)}\chi_m\left(\kappa_{\theta_2}\right),\quad\theta_1,\theta_2\in\bbR,\ t\in\bbR_{\geq0}. \]\label{lem:045:3}
		 \item If $ m\in\frac52+\bbZ_{\geq0} $, then $ F_{k,m}\in L^1\left(\Met\right) $.\label{lem:045:4}
		 \item Suppose $ m\in\frac52+\bbZ_{\geq0} $. Let $ \Gamma $ be a discrete subgroup of finite covolume in $ \Met $. Then, the series $ \sum_{\gamma\in\Gamma}\abs{F_{k,m}(\gamma\spacedcdot)} $ converges uniformly on compact sets in $ \Met $, and $ P_\Gamma F_{k,m}\in\calA\left(\Gamma\backslash\Met\right)_m $.\label{lem:045:5}
	\end{enumerate}
	\end{Lem}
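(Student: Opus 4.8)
The plan is to establish the analytic facts first and feed them into the representation theory, treating the claims in the order: the equivariance in (\ref{lem:045:1}), then (\ref{lem:045:2}), then (\ref{lem:045:3}), then the matrix-coefficient part of (\ref{lem:045:1}), then (\ref{lem:045:4}), and finally (\ref{lem:045:5}).

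I would begin with the two $K$-equivariances in (\ref{lem:045:1}). Right $\chi_m$-equivariance holds for every lift $F_f$, being immediate from \eqref{eq:050} together with $\kappa_t.i=i$ and $\eta_{\kappa_t}(i)=e^{it/2}$. For left $\chi_{m+2k}$-equivariance it suffices to verify $f_{k,m}\big|_m\kappa_t=\chi_{m+2k}(\kappa_t)\,f_{k,m}$; the Cayley-type identities $\kappa_t.z\mp i=e^{\mp it}(z\mp i)/(z\sin t+\cos t)$ turn $f_{k,m}\big|_m\kappa_t$ into $e^{-i(m+2k)t}f_{k,m}$ once the powers of $z\sin t+\cos t$ cancel against $\eta_{\kappa_t}(z)^{-2m}=(z\sin t+\cos t)^{-m}$. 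For (\ref{lem:045:2}) I would compute directly in Iwasawa coordinates: substituting $F_{k,m}(n_xa_y\kappa_t)=f_{k,m}(x+iy)\,y^{m/2}e^{-imt}$ into \eqref{eq:036} and using the Cauchy--Riemann equations for $f_{k,m}$, the pure second-order terms cancel and the mixed term $2y\,\partial_x\partial_t$ kills the surviving first-order term, leaving the eigenvalue $m\left(\frac m2-1\right)$. (This computation works for any holomorphic $f$, which is why it reappears around Theorem \ref{thm:005}.)

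Using both equivariances, (\ref{lem:045:3}) reduces to computing $F_{k,m}(h_t)$, since $F_{k,m}(\kappa_{\theta_1}h_t\kappa_{\theta_2})=\chi_{m+2k}(\kappa_{\theta_1})F_{k,m}(h_t)\chi_m(\kappa_{\theta_2})$; from $h_t.i=e^{2t}i$ and $\eta_{h_t}(i)^{-2m}=e^{mt}$ one simplifies $F_{k,m}(h_t)$ to $\tanh^k(t)/\cosh^m(t)$ via $\frac{e^{2t}-1}{e^{2t}+1}=\tanh t$ and $\frac{e^{mt}}{(e^{2t}+1)^m}=(2\cosh t)^{-m}$. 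This radial formula then lets me finish (\ref{lem:045:1}): it shows $\abs{F_{k,m}(\kappa_{\theta_1}h_t\kappa_{\theta_2})}=\tanh^k(t)\cosh^{-m}(t)$, which is square-integrable against $\sinh(2t)\,dt$ for all $m\geq\frac32$, so $F_{k,m}\in C^\infty(\Met)\cap L^2(\Met)$. Having also (\ref{lem:045:2}) and right $\chi_m$-equivariance, Lemma \ref{lem:006} applied to the trivial subgroup $\Gamma=\{1\}$ exhibits the closed right-translation span of $F_{k,m}$ as an irreducible $H\subseteq L^2(\Met)$ with $H\cong\overline{\pi_m}$, inside which $F_{k,m}$ spans the $\chi_m$-isotypic line. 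The evaluation functional $\psi\mapsto\psi(1)$ is bounded on $H$ (a consequence of the square-integrability of $\overline{\pi_m}$), so Riesz representation furnishes $\xi\in H$ with $F_{k,m}(\sigma)=\scal{r(\sigma)F_{k,m}}{\xi}$, a matrix coefficient of $\overline{\pi_m}$; the left $\chi_{m+2k}$-equivariance forces $\xi$ into the $\chi_{m+2k}$-isotypic line, and the multiplicity-one decomposition of Lemma \ref{lem:007} yields uniqueness up to a scalar.

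For (\ref{lem:045:4}) I would integrate $\abs{F_{k,m}}$ against the $KAK$ Haar measure \eqref{eq:008}: the compact factors contribute constants, leaving $4\pi\int_0^\infty\tanh^k(t)\cosh^{-m}(t)\sinh(2t)\,dt$, whose integrand is $O(t^{k+1})$ near $0$ and $\asymp e^{(2-m)t}$ as $t\to\infty$; convergence holds exactly because $m\geq\frac52>2$, which is precisely the integrability of $\overline{\pi_m}$. Finally, for (\ref{lem:045:5}) the uniform-on-compacta convergence of $\sum_{\gamma\in\Gamma}\abs{F_{k,m}(\gamma\spacedcdot)}$ follows from the integral criterion \cite[Lemma 2-1]{MuicIJNT}, since by (\ref{lem:045:3}) $\abs{F_{k,m}}$ is dominated by a bi-$K$-invariant, radially decreasing, integrable majorant and $\Gamma$ has finite covolume. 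The sum $P_\Gamma F_{k,m}$ then inherits right $\chi_m$-equivariance termwise, and, differentiating termwise (legitimate because the derivatives of $F_{k,m}$ admit the same kind of integrable radial majorants) while using that $\calC$ is left-invariant, it is smooth and remains a $\calC$-eigenfunction for $m\left(\frac m2-1\right)$. The one substantial analytic point, which I expect to be the main obstacle, is $L^2\left(\Gamma\backslash\Met\right)$-membership: here I would use that $\overline{\pi_m}$ is an integrable discrete series (by (\ref{lem:045:4})) and $F_{k,m}$ one of its $K$-finite matrix coefficients, so that its Poincar\'e series lands in the discrete $L^2$-spectrum; equivalently, via Lemma \ref{lem:021} one shows $\sup_{z\in\calH}\abs{(P_\Gamma f_{k,m})(z)\Im(z)^{m/2}}<\infty$, exhibiting $P_\Gamma f_{k,m}$ as a cusp form and hence, by Theorem \ref{thm:005}, placing $P_\Gamma F_{k,m}$ in $\calA\left(\Gamma\backslash\Met\right)_m$.
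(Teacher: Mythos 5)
Your proposal is essentially correct, but it takes a genuinely different route from the paper for the simple reason that the paper does not prove this lemma at all: each of the five items is dispatched by a citation to the companion preprint \cite{ZunarNonVan2017} (Proposition 4-7, Lemmas 4-4, 4-9, 4-10, and the proof of Lemma 6-2). What you have written is, in effect, a reconstruction of those external proofs, and the computational core checks out: the Cayley-type identities $\kappa_t.z\mp i=e^{\mp it}(z\mp i)/(z\sin t+\cos t)$ do give the left $\chi_{m+2k}$-equivariance, the Iwasawa computation with \eqref{eq:036} and holomorphy gives the eigenvalue $m\left(\frac m2-1\right)$ for any lift $F_f$, the evaluation $F_{k,m}(h_t)=\tanh^k(t)/\cosh^m(t)$ is right, and the $KAK$ integral $4\pi\int_0^\infty\tanh^k(t)\cosh^{-m}(t)\sinh(2t)\,dt$ converges exactly for $m>2$. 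Your use of Lemma \ref{lem:006} with $\Gamma=\{1\}$ to identify the closed span of right translates with $\overline{\pi_m}$ is also exactly how the paper itself later exploits $F_{k,m}$ in the proof of Theorem \ref{thm:017}, so the two approaches are at least stylistically aligned.

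Two soft spots deserve flagging. First, you attribute the uniform-convergence criterion in (\ref{lem:045:5}) to \cite[Lemma 2-1]{MuicIJNT}, but in this paper that lemma is the \emph{non-vanishing} integral criterion (used in Theorem \ref{thm:030}); the domination-by-a-radially-decreasing-integrable-majorant argument you describe is the right idea, but the correct reference is the convergence lemma of \cite{MuicJNT} (or Borel), not that one. Second, the two most substantive analytic claims are asserted rather than argued: the boundedness of the evaluation functional $\psi\mapsto\psi(1)$ on the closed irreducible subspace $H\subseteq L^2\left(\Met\right)$ (needed for the Riesz step identifying $F_{k,m}$ as a matrix coefficient), and the $L^2\left(\Gamma\backslash\Met\right)$-membership of $P_\Gamma F_{k,m}$. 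Both are true and standard --- the first follows from the orthogonality relations for discrete series, the second from the theorem that Poincar\'e series of $K$-finite matrix coefficients of integrable representations lie in $L^2\left(\Gamma\backslash\Met\right)$ --- but they are precisely the points where the cited results of \cite{ZunarNonVan2017} and \cite{MuicJNT} do real work, so a complete write-up would need to either prove them or cite them accurately.
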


\begin{proof}
	\eqref{lem:045:1} is \cite[Proposition 4-7]{ZunarNonVan2017}, \eqref{lem:045:2} follows from \cite[Lemma 4-4.(3)]{ZunarNonVan2017}, \eqref{lem:045:3} is \cite[Lemma 4-9]{ZunarNonVan2017}, \eqref{lem:045:4} is \cite[Lemma 4-10]{ZunarNonVan2017}, and \eqref{lem:045:5} is clear from the proof of \cite[Lemma 6-2]{ZunarNonVan2017}.
\end{proof}

Next, we prepare a few technical results for the proof of Theorem \ref{thm:017}.

\begin{Lem}\label{lem:008}
	Let $ m\in\frac32+\bbZ_{\geq0} $ and $ k\in\bbZ_{\geq0} $. Then, we have the following:
	\begin{enumerate}
		\item $\norm{F_{k,m}}_{L^2\left(\Met\right)}^2=\frac{4\pi k!}{\prod_{r=0}^{k}(m-1+r)}.$\label{lem:008:1}
		\item Let $ f\in\Hol(\calH) $. Then, for all $ x,y,t\in\bbR $ with $ y>0 $,
		\begin{equation}\label{eq:009}
		\left(\left(n^+\right)^kF_f\right)(n_xa_y\kappa_t)=\chi_{m+2k}(\kappa_t)y^{\frac m2}\sum_{l=0}^k\binom kl(2iy)^l\left(\prod_{r=l+1}^{k}(m-1+r)\right)f^{(l)}(x+iy).
		\end{equation}\label{lem:008:2}
		\item $ \left(\left(n^+\right)^kF_{k,m}\right)(1)=k! $.\label{lem:008:3}
	\end{enumerate}
\end{Lem}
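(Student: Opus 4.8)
The three parts are essentially independent computations, and I would dispatch them in turn.

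For \eqref{lem:008:1} I would compute the norm directly in Cartan coordinates. Since $\abs{\chi_n(\kappa_\theta)}=1$, Lemma \hyperref[lem:045:3]{\ref*{lem:045}.(\ref*{lem:045:3})} gives $\abs{F_{k,m}(\kappa_{\theta_1}h_t\kappa_{\theta_2})}^2=\tanh^{2k}(t)\cosh^{-2m}(t)$, so the Haar-measure formula \eqref{eq:008} collapses the two angular integrations to an overall factor $4\pi$ and reduces the norm to $4\pi\int_0^\infty\tanh^{2k}(t)\cosh^{-2m}(t)\sinh(2t)\,dt$. The substitution $w=\tanh^2(t)$ (for which $\cosh^{-2m}(t)\sinh(2t)\,dt=(1-w)^{m-2}\,dw$) turns this into the Beta integral $\int_0^1 w^k(1-w)^{m-2}\,dw=B(k+1,m-1)=k!\,\Gamma(m-1)/\Gamma(m+k)$, and the relation $\Gamma(m+k)/\Gamma(m-1)=\prod_{r=0}^k(m-1+r)$ yields the asserted value.

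The heart of the lemma is \eqref{lem:008:2}, which I would prove by induction on $k$. The base case $k=0$ is precisely \eqref{eq:010}. For the inductive step I would apply the operator $n^+$ from \eqref{eq:011} to the claimed expression for $(n^+)^kF_f$. The $\partial_t$ term simply multiplies by $-i(m+2k)$, correctly shifting the left $K$-type from $\chi_{m+2k}$ to $\chi_{m+2k+2}$; for the $\partial_x-i\partial_y$ term I would use that holomorphy of $f$ gives $\partial_xf^{(l)}(x+iy)=f^{(l+1)}(x+iy)$ and $\partial_yf^{(l)}(x+iy)=if^{(l+1)}(x+iy)$. Collecting the result by the monomials $y^{\frac m2+l}f^{(l)}(x+iy)$, the passage from step $k$ to step $k+1$ reduces to the coefficient recurrence
\[ c_{k+1,l}=2i\,c_{k,l-1}+(m+k+l)\,c_{k,l},\qquad c_{k,l}:=\binom kl(2i)^l\prod_{r=l+1}^k(m-1+r), \]
which I would verify by cancelling the common factor $\prod_{r=l+1}^k(m-1+r)$ and reducing to the binomial identity $(m+k)\binom{k+1}l=(m-1+l)\binom k{l-1}+(m+k+l)\binom kl$. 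This bookkeeping — matching the powers of $y$ against the orders of differentiation so that the two contributions assemble into $c_{k+1,l}$ — is the one delicate point of the argument.

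Finally, \eqref{lem:008:3} falls out of \eqref{lem:008:2}. Evaluating at the identity $1=n_0a_1\kappa_0$ (that is, $x=t=0$, $y=1$) trivializes $\chi_{m+2k}(\kappa_t)$, giving $\left((n^+)^kF_{k,m}\right)(1)=\sum_{l=0}^k\binom kl(2i)^l\left(\prod_{r=l+1}^k(m-1+r)\right)f_{k,m}^{(l)}(i)$. Since $f_{k,m}(z)=(2i)^m(z-i)^k(z+i)^{-(m+k)}$ has a zero of order exactly $k$ at $z=i$, every term with $l<k$ vanishes and only $l=k$ survives; there the product over $r$ is empty and Leibniz's rule gives $f_{k,m}^{(k)}(i)=k!\,(2i)^{-k}$, so the single remaining term equals $(2i)^k\cdot k!\,(2i)^{-k}=k!$, as claimed.
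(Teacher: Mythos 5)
Your proof is correct and follows essentially the same route as the paper: Cartan coordinates and the substitution $w=\tanh^2(t)$ for the norm (the paper evaluates the resulting integral by $k$-fold integration by parts rather than quoting the Beta function, a negligible difference), induction on $k$ via \eqref{eq:011} for the derivative formula, and specialization to $f=f_{k,m}$ at $x=t=0$, $y=1$ for the last part. Your explicit coefficient recurrence $c_{k+1,l}=2i\,c_{k,l-1}+(m+k+l)c_{k,l}$ and the reduction to a binomial identity check out, and in fact supply more detail than the paper's one-line indication.
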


\begin{proof}
	\eqref{lem:008:1} By Lemma \hyperref[lem:045:3]{\ref*{lem:045}.(\ref*{lem:045:3})} and \eqref{eq:008}, we have
	\[ \norm{F_{k,m}}_{L^2\left(\Met\right)}^2=\frac1{4\pi}\int_0^{4\pi}\int_0^\infty\int_0^{4\pi}\frac{\tanh^{2k}(t)}{\cosh^{2m}(t)}\sinh(2t)\,d\theta_1\,dt\,d\theta_2, \]
	which, substituting $ x=\tanh^2(t) $ and using the identities $ \sinh(2t)=2\sinh(t)\cosh(t) $ and $ \frac1{\cosh^2(t)}=1-\tanh^2(t) $, equals
	\[ 4\pi\int_0^1x^k(1-x)^{m-2}\,dx=\frac{4\pi k!}{\prod_{r=0}^{k}(m-1+r)}. \]
	The last equality is obtained by $ k $-fold partial integration.

	\eqref{lem:008:2} This is proved by induction on $ k\in\bbZ_{\geq0} $ using \eqref{eq:011} and noting that in the case when $ k=0 $ \eqref{eq:009} is \eqref{eq:010}.
	
	\eqref{lem:008:3} Since $ F_{k,m}=F_{f_{k,m}} $, \eqref{lem:008:3} is just \eqref{lem:008:2} applied to $ f=f_{k,m} $ with $ x=t=0 $ and $ y=1 $.   
\end{proof}

Let $ \Gamma $ be a discrete subgroup of $ \Met $. For $ F\in L^1\left(\Met\right) $ and $ \varphi\in L^2\left(\Gamma\backslash\Met\right) $, $ r_\Gamma(F)\varphi\in L^2\left(\Gamma\backslash\Met\right) $ is standardly defined by the following condition:
\[ \scal{r_\Gamma(F)\varphi}{\phi}_{L^2\left(\Gamma\backslash\Met\right)}=\int_\Met F(y)\scal{r_\Gamma(y)\varphi}{\phi}_{L^2\left(\Gamma\backslash\Met\right)}\,d\mu_\Met(y) \]
for all $ \phi\in L^2\left(\Gamma\backslash\Met\right) $. It is well-known that
\begin{equation}\label{eq:012}
\left(r_\Gamma(F)\varphi\right)(x)=\int_\Met F(y)\varphi(xy)\,d\mu_\Met(y)
\end{equation}
for almost all $ x\in\Met $. The following lemma is immediate.

\begin{Lem}\label{lem:016}
	Let $ \Gamma $ be a discrete subgroup of $ \Met $. Let $ F\in L^1\left(\Met\right) $ and $ \varphi\in L^2\left(\Gamma\backslash\Met\right) $. If $ \varphi $ is continuous and bounded, then the integral in \eqref{eq:012} converges for every $ x\in\Met $, and \eqref{eq:012} expresses the continuous representative of $ r_\Gamma(F)\varphi $. 
\end{Lem}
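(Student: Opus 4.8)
The plan is to show that the right-hand side of \eqref{eq:012} defines a genuine (everywhere-defined) continuous function of $ x $ that agrees almost everywhere with $ r_\Gamma(F)\varphi $; since the cited well-known fact guarantees the a.e.\ agreement, this is exactly what the lemma asserts. First I would settle convergence: fix $ x\in\Met $. Because $ \varphi $ is continuous, $ y\mapsto\varphi(xy) $ is continuous, so $ y\mapsto F(y)\varphi(xy) $ is measurable; because $ \varphi $ is bounded we have the pointwise domination $ \abs{F(y)\varphi(xy)}\leq\norm{\varphi}_\infty\abs{F(y)} $ with $ \abs F\in L^1\left(\Met\right) $. Hence the integral converges absolutely for every $ x\in\Met $, and we may write $ \psi(x):=\int_\Met F(y)\varphi(xy)\,d\mu_\Met(y) $.

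Next I would prove that $ \psi $ is continuous. Since $ \Met $ is unimodular (being a connected covering of the unimodular group $ \SL2(\bbR) $), $ \mu_\Met $ is left-invariant, and the substitution $ y\mapsto x^{-1}y $ gives
\[ \psi(x)=\int_\Met F\left(x^{-1}y\right)\varphi(y)\,d\mu_\Met(y). \]
For $ x,x_0\in\Met $, the boundedness of $ \varphi $ then yields
\[ \abs{\psi(x)-\psi(x_0)}\leq\norm{\varphi}_\infty\int_\Met\abs{F\left(x^{-1}y\right)-F\left(x_0^{-1}y\right)}\,d\mu_\Met(y)=\norm{\varphi}_\infty\norm{L_xF-L_{x_0}F}_{L^1\left(\Met\right)}, \]
where $ L_g $ denotes left translation, $ (L_gF)(y):=F\left(g^{-1}y\right) $. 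The one genuinely non-formal input—and hence the main point of the argument—is the standard fact that left translation is strongly continuous on $ L^1 $ of a locally compact group, i.e.\ $ g\mapsto L_gF $ is continuous $ \Met\to L^1\left(\Met\right) $; this forces the right-hand side to tend to $ 0 $ as $ x\to x_0 $, so $ \psi $ is continuous on $ \Met $. Note also that $ \psi(\gamma x)=\psi(x) $ for $ \gamma\in\Gamma $ by the left $ \Gamma $-invariance of $ \varphi $, so $ \psi $ descends to a continuous function on $ \Gamma\backslash\Met $.

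Finally, by the well-known identity \eqref{eq:012} we have $ \psi(x)=\left(r_\Gamma(F)\varphi\right)(x) $ for almost every $ x $. Since $ \psi $ is continuous and the Haar measure on $ \Met $ has full support, $ \psi $ is the unique continuous function in the $ L^2 $-class $ r_\Gamma(F)\varphi $; that is, \eqref{eq:012} expresses its continuous representative, as claimed. Everything beyond the strong-continuity input reduces to dominated convergence and the left-invariance change of variables, so I expect no further obstacles.
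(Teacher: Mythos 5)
Your argument is correct and complete: the domination $\abs{F(y)\varphi(xy)}\leq\norm{\varphi}_\infty\abs{F(y)}$ gives everywhere-convergence, the left-invariance substitution together with strong continuity of translation on $L^1$ of a locally compact group gives continuity of $\psi$, and a.e.\ agreement with $r_\Gamma(F)\varphi$ plus full support of Haar measure identifies $\psi$ as the continuous representative. The paper offers no proof at all (it declares the lemma ``immediate''), and what you have written is precisely the standard argument being alluded to, so there is nothing to compare beyond noting that you have filled in the omitted details correctly.
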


Now we are ready to prove the main result of this paper:

\begin{Thm}\label{thm:017}
	Let $ \Gamma $ be a discrete subgroup of finite covolume in $ \Met $. Let $ m\in\frac52+\bbZ_{\geq0} $, $ k\in\bbZ_{\geq0} $, and $ \varphi\in\calA\left(\Gamma\backslash\Met\right)_m $. Then,
	\begin{equation}\label{eq:023}
	\scal\varphi{P_\Gamma F_{k,m}}_{L^2\left(\Gamma\backslash\Met\right)}=\frac{4\pi}{\prod_{r=0}^{k}(m-1+r)}\left(\left(n^+\right)^k\varphi\right)(1). 
	\end{equation}
\end{Thm}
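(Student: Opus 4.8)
The plan is to pass to the group $\Met$, unfold the Poincar\'e series into an integral over $\Met$, and then read off the answer from the representation theory of $\overline{\pi_m}$ together with the uniqueness of its matrix coefficient recorded in Lemma~\ref{lem:045}.

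First I would unfold. Since $F_{k,m}\in L^1(\Met)$ by Lemma~\ref{lem:045}, since $\varphi$ is bounded by Theorem~\ref{thm:005}, and since $\sum_{\gamma\in\Gamma}\abs{F_{k,m}(\gamma\spacedcdot)}$ converges locally uniformly by Lemma~\ref{lem:045}, the product $\varphi\overline{F_{k,m}}$ lies in $L^1(\Met)$ and the standard interchange of summation and integration gives
\begin{equation*}
\scal{\varphi}{P_\Gamma F_{k,m}}_{L^2(\Gamma\backslash\Met)}=\int_{\Met}\varphi(y)\overline{F_{k,m}(y)}\,d\mu_\Met(y).
\end{equation*}
By Lemma~\ref{lem:016}, applied with $F=\overline{F_{k,m}}\in L^1(\Met)$ and the bounded continuous function $\varphi$, the right-hand side is exactly $\bigl(r_\Gamma(\overline{F_{k,m}})\varphi\bigr)(1)$. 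It therefore remains to compute the vector $r_\Gamma(\overline{F_{k,m}})\varphi$.

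For the representation-theoretic step, let $H$ be the minimal closed subrepresentation of $r_\Gamma$ containing $\varphi$. By Lemma~\ref{lem:006}, $H\cong\overline{\pi_m}$ and $\varphi$ spans its $\chi_m$-isotypic component; by Lemma~\ref{lem:007}, $(n^+)^k\varphi$ then spans the one-dimensional $\chi_{m+2k}$-isotypic component of $H$. Writing $r_\Gamma(\overline{F_{k,m}})\varphi$ as the vector-valued integral $\int_\Met\overline{F_{k,m}(y)}\,r_\Gamma(y)\varphi\,d\mu_\Met(y)$ shows it lies in the closed subspace $H$. Moreover, since $F_{k,m}$ transforms on the left as $\chi_{m+2k}$, a change of variable $y\mapsto\kappa^{-1}y$ in this integral (using left-invariance of $\mu_\Met$) shows that $r_\Gamma(\overline{F_{k,m}})\varphi$ transforms on the right as $\chi_{m+2k}$. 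Hence it lies in the $\chi_{m+2k}$-isotypic line of $H$, so
\begin{equation*}
r_\Gamma(\overline{F_{k,m}})\varphi=\lambda\,(n^+)^k\varphi
\end{equation*}
for some scalar $\lambda$, and the theorem reduces to showing $\lambda=\tfrac{4\pi}{\prod_{r=0}^k(m-1+r)}$.

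To pin down $\lambda$ I would take the inner product of the last display with $(n^+)^k\varphi$ and use the defining property of $r_\Gamma(\overline{F_{k,m}})$:
\begin{equation*}
\lambda\,\norm{(n^+)^k\varphi}^2=\int_\Met\overline{F_{k,m}(y)}\,\scal{r_\Gamma(y)\varphi}{(n^+)^k\varphi}\,d\mu_\Met(y).
\end{equation*}
The function $\Phi(y):=\scal{r_\Gamma(y)\varphi}{(n^+)^k\varphi}$ is a smooth matrix coefficient of $H\cong\overline{\pi_m}$ which, by the $K$-transformation of $\varphi$ and of $(n^+)^k\varphi$, transforms on the right as $\chi_m$ and on the left as $\chi_{m+2k}$; by the uniqueness in Lemma~\ref{lem:045}, $\Phi=\beta F_{k,m}$ for some scalar $\beta$. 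Applying the left-invariant operator $(n^+)^k$ to $\Phi(y)=\scal{r_\Gamma(y)\varphi}{(n^+)^k\varphi}$ moves $(n^+)^k$ onto the first argument, and evaluating at $y=1$ gives $\norm{(n^+)^k\varphi}^2=\beta\bigl((n^+)^kF_{k,m}\bigr)(1)=\beta\,k!$ by Lemma~\ref{lem:008}. Substituting $\Phi=\beta F_{k,m}$ into the integral above yields $\lambda\,\norm{(n^+)^k\varphi}^2=\beta\,\norm{F_{k,m}}_{L^2(\Met)}^2$, and inserting $\beta=\norm{(n^+)^k\varphi}^2/k!$ together with the value of $\norm{F_{k,m}}_{L^2(\Met)}^2$ from Lemma~\ref{lem:008} gives $\lambda=\tfrac{4\pi}{\prod_{r=0}^k(m-1+r)}$, as required.

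The main obstacle is the normalization bookkeeping in the final paragraph: one must correctly identify $\Phi$ with a multiple of $F_{k,m}$ (this is exactly where Lemma~\ref{lem:045} and the precise $K$-types of $\varphi$ and $(n^+)^k\varphi$ are indispensable) and then extract $\beta$ by the differentiation-and-evaluation trick, being careful that applying the left-invariant operator $(n^+)^k$ to a matrix coefficient corresponds to applying the derived representation $d r_\Gamma(n^+)^k$ to the first vector. Everything else is routine once the convergence justifying the unfolding is in place.
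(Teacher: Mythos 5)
Your argument is correct and follows the paper's strategy for most of its length: the unfolding, the identification of the inner product with $\bigl(r_\Gamma\bigl(\overline{F_{k,m}}\bigr)\varphi\bigr)(1)$ via Lemma \ref{lem:016}, and the reduction to $r_\Gamma\bigl(\overline{F_{k,m}}\bigr)\varphi=\lambda\left(n^+\right)^k\varphi$ coincide with the published proof. Where you genuinely diverge is in the evaluation of $\lambda$. The paper applies Lemma \ref{lem:006} with $\Gamma=\{1\}$ to $F_{k,m}$ itself, transports the identity $r_\Gamma\bigl(\overline{F_{k,m}}\bigr)\varphi=\lambda\left(n^+\right)^k\varphi$ through a unitary equivalence $H_\varphi\to H_{F_{k,m}}$ (under which $\varphi\mapsto sF_{k,m}$), and reads off $\lambda=\norm{F_{k,m}}_{L^2\left(\Met\right)}^2/k!$ by evaluating $r\bigl(\overline{F_{k,m}}\bigr)F_{k,m}=\lambda\left(n^+\right)^kF_{k,m}$ at $1$. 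You instead pair against $\left(n^+\right)^k\varphi$ and use the uniqueness clause of Lemma \ref{lem:045}(1) to identify the matrix coefficient $y\mapsto\scal{r_\Gamma(y)\varphi}{\left(n^+\right)^k\varphi}$ with $\beta F_{k,m}$, extracting $\beta$ by the differentiation-and-evaluation step. Both computations give the same constant; yours relies on the standard fact that a left-invariant operator applied to a matrix coefficient acts through the derived representation on the first vector (which you rightly flag as the delicate point), whereas the paper's route only ever differentiates $F_{k,m}$ itself, for which Lemma \ref{lem:008}(3) is already available. Two small housekeeping remarks: the case $\varphi\equiv0$ must be dispatched separately (as the paper does in its first line), since both Lemma \ref{lem:006} and your division by $\norm{\left(n^+\right)^k\varphi}^2$ require $\varphi\not\equiv0$; and one should note that $\left(n^+\right)^k\varphi$ lies in $L^2\left(\Gamma\backslash\Met\right)$ (it belongs to the $(\frakg,K)$-module of $H_\varphi$) so that the pairing defining $\beta$ and $\lambda$ makes sense.
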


\begin{proof}
	The case when $ \varphi\equiv0 $ is trivial, so suppose that $ \varphi\not\equiv0 $. We have
	\begin{align}
		\scal\varphi{P_\Gamma F_{k,m}}_{L^2\left(\Gamma\backslash\Met\right)}&=\int_{\Gamma\backslash\Met}\varphi(\sigma)\overline{\sum_{\gamma\in\Gamma}F_{k,m}(\gamma\sigma)}\,d\mu_{\Gamma\backslash\Met}(\sigma) \nonumber\\
		&=\int_{\Gamma\backslash\Met}\sum_{\gamma\in\Gamma}\varphi(\gamma\sigma)\overline{F_{k,m}(\gamma\sigma)}\,d\mu_{\Gamma\backslash\Met}(\sigma) \label{eq:013}\\
		&=\int_\Met\varphi\overline{F_{k,m}}\,d\mu_\Met.\nonumber
	\end{align}
	
	Now, $ F_{k,m}\in L^1\left(\Met\right) $ by Lemma \hyperref[lem:045:4]{\ref*{lem:045}.(\ref*{lem:045:4})}, and $ \varphi $ is continuous and bounded by Theorem \ref{thm:005}. Thus, by Lemma \ref{lem:016}, $ r_\Gamma\left(\overline{F_{k,m}}\right)\varphi\in L^2\left(\Gamma\backslash\Met\right)\cap C\left(\Gamma\backslash\Met\right) $ is given by
	\begin{equation}\label{eq:014}
	\left(r_\Gamma\left(\overline{F_{k,m}}\right)\varphi\right)(x)=\int_\Met\overline{F_{k,m}(y)}\varphi(xy)\,d\mu_\Met(y),\qquad x\in\Met.
	\end{equation}
	In particular,
	\[ \left(r_\Gamma\left(\overline{F_{k,m}}\right)\varphi\right)(1)=\int_\Met\varphi\overline{F_{k,m}}\,d\mu_\Met, \]
	so \eqref{eq:013} implies that
	\begin{equation}\label{eq:018}
	\scal\varphi{P_\Gamma F_{k,m}}_{L^2\left(\Gamma\backslash\Met\right)}=\left(r_\Gamma\left(\overline{F_{k,m}}\right)\varphi\right)(1). 
	\end{equation}
	
	To compute $ \left(r_\Gamma\left(\overline{F_{k,m}}\right)\varphi\right)(1) $, we note that by Lemma \ref{lem:006} $ \varphi $ generates the $ \chi_m $-isotypic component of a closed subrepresentation $ H_\varphi $ of $ r_\Gamma $ that is unitarily equivalent to $ \overline{\pi_m} $. Clearly, $ r_\Gamma\left(\overline{F_{k,m}}\right)\varphi\in H_\varphi $. In fact, $ r_\Gamma\left(\overline{F_{k,m}}\right)\varphi $ belongs to the $ \chi_{m+2k} $-isotypic component of $ H_\varphi $: since $ F_{k,m} $ transforms on the left as $ \chi_{m+2k} $ by Lemma \hyperref[lem:045:1]{\ref*{lem:045}.(\ref*{lem:045:1})}, we have 
	\begin{align*}
		\left(r_\Gamma\left(\overline{F_{k,m}}\right)\varphi\right)(x\kappa)&\overset{\eqref{eq:014}}=\int_\Met\overline{F_{k,m}(y)}\varphi(x\kappa y)\,d\mu_\Met(y)\\
		&\ =\ \int_\Met\overline{F_{k,m}\left(\kappa^{-1}y\right)}\varphi(xy)\,d\mu_\Met(y)\\
		&\ =\ \chi_{m+2k}(\kappa)\int_\Met\overline{F_{k,m}(y)}\varphi(xy)\,d\mu_\Met(y)\\
		&\overset{\eqref{eq:014}}=\chi_{m+2k}(\kappa)\left(r_\Gamma\left(\overline{F_{k,m}}\right)\varphi\right)(x)
	\end{align*}
	for all $ x\in\Met $ and $ \kappa\in K $. Hence, by Lemma \hyperref[lem:007:2]{\ref*{lem:007}.(\ref*{lem:007:2})}, 
	\begin{equation}\label{eq:015}
	r_\Gamma\left(\overline{F_{k,m}}\right)\varphi=\lambda\left(n^+\right)^k\varphi\qquad\text{for some }\lambda\in\bbC.
	\end{equation}
	
	To calculate $ \lambda $, we apply Lemma \ref{lem:006}, with $ \Gamma=\{1\} $, to $ F_{k,m} $. ($ F_{k,m} $ satisfies all conditions of Lemma \ref{lem:006} by Lemmas \hyperref[lem:045:1]{\ref*{lem:045}.(\ref*{lem:045:1})}--\eqref{lem:045:2} and \hyperref[lem:008:1]{\ref*{lem:008}.(\ref*{lem:008:1})}.) We obtain that $ F_{k,m} $ spans the $ \chi_m $-isotypic component of a closed subrepresentation $ H_{F_{k,m}} $ of $ r $ that is unitarily equivalent to $ \overline{\pi_m} $. Let $ \Phi:H_{\varphi}\to H_{F_{k,m}} $ be a unitary equivalence. Since $ \varphi $ and $ F_{k,m} $ span the $ \chi_m $-isotypic components of, respectively, $ H_\varphi $ and $ H_{F_{k,m}} $, we have
	\begin{equation}\label{eq:016}
		\Phi\varphi=s F_{k,m}\quad\text{for some }s\in\bbC^\times.
	\end{equation} 
	By applying $ \Phi $ to both sides of \eqref{eq:015}, we obtain
	\[ r\left(\overline{F_{k,m}}\right)\Phi\varphi=\lambda\left(n^+\right)^k\Phi\varphi, \]
	hence by \eqref{eq:016}
	\[ r\left(\overline{F_{k,m}}\right)F_{k,m}=\lambda\left(n^+\right)^kF_{k,m}. \]
	By evaluating (the continuous representatives of) both sides at $ 1\in\Met $ and using that $ \left(r\left(\overline{F_{k,m}}\right)F_{k,m}\right)(1)=\norm{F_{k,m}}_{L^2\left(\Met\right)}^2 $ by Lemma \ref{lem:016}, we obtain
	\begin{equation}\label{eq:020}
	\lambda=\frac{\norm{F_{k,m}}_{L^2\left(\Met\right)}^2}{\left(\left(n^+\right)^kF_{k,m}\right)(1)}=\frac{4\pi}{\prod_{r=0}^{k}(m-1+r)}
	\end{equation}
	by Lemma \hyperref[lem:008:1]{\ref*{lem:008}.(\ref*{lem:008:1})} and \eqref{lem:008:3}.
	
	Thus,
	\begin{align*}
		\scal\varphi{P_\Gamma F_{k,m}}_{L^2\left(\Gamma\backslash\Met\right)}&\overset{\eqref{eq:018}}=\left(r_\Gamma\left(\overline{F_{k,m}}\right)\varphi\right)(1)\\
		&\overset{\eqref{eq:015}}=\lambda\left(\left(n^+\right)^k\varphi\right)(1)\\
		&\overset{\eqref{eq:020}}=\frac{4\pi}{\prod_{r=0}^{k}(m-1+r)}\left(\left(n^+\right)^k\varphi\right)(1).\qedhere
	\end{align*}
\end{proof}

\section{Cusp forms \eqref{eq:073} and their non-vanishing}

Throughout this section, let $ m\in\frac52+\bbZ_{\geq0} $.

\begin{Thm}\label{thm:020}
	Let $ \Gamma $ be a discrete subgroup of finite covolume in $ \Met $, $ \chi:\Gamma\to\bbC^\times $ a character of finite order, and $ k\in\bbZ_{\geq0} $. Then:
	\begin{enumerate}
		\item The series $ P_{\Gamma,\chi}f_{k,m} $ converges absolutely and uniformly on compact sets in $ \calH $.\label{thm:020:1} 
		\item $ P_{\Gamma,\chi}f_{k,m}\in S_m(\Gamma,\chi) $. \label{thm:020:2}
		\item For every $ f\in S_m(\Gamma,\chi) $,
		\[ \scal f{P_{\Gamma,\chi}f_{k,m}}_\Gamma=\sum_{l=0}^k\binom kl(2i)^l\frac{4\pi}{\prod_{r=0}^{l}(m-1+r)}f^{(l)}(i). \]\label{thm:020:3}	
		\item $ \left\{P_{\Gamma,\chi} f_{n,m}:n\in\bbZ_{\geq0}\right\} $ spans $ S_m(\Gamma,\chi) $.\label{thm:020:4}
	\end{enumerate}
\end{Thm}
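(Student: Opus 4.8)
The plan is to prove the four claims in order, systematically reducing the twisted (character $ \chi $) statements to the untwisted machinery of Section 4 by passing to the normal, finite-index subgroup $ \Lambda:=\ker\chi $, which is again discrete of finite covolume in $ \Met $. Claims \eqref{thm:020:1} and \eqref{thm:020:2} are quick. Since $ \chi $ has finite order, $ \abs{\chi(\gamma)}=1 $, so $ P_{\Gamma,\chi}F_{k,m} $ is dominated termwise by $ \sum_{\gamma\in\Gamma}\abs{F_{k,m}(\gamma\spacedcdot)} $, which converges uniformly on compact sets by Lemma \hyperref[lem:045:5]{\ref*{lem:045}.(\ref*{lem:045:5})}; Lemma \ref{lem:021} transfers this to $ P_{\Gamma,\chi}f_{k,m} $ and yields \eqref{thm:020:1} together with $ F_{P_{\Gamma,\chi}f_{k,m}}=P_{\Gamma,\chi}F_{k,m} $. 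For \eqref{thm:020:2} I would first note that $ P_\Lambda F_{k,m}\in\calA\left(\Lambda\backslash\Met\right)_m $ (Lemma \ref{lem:021} and Lemma \hyperref[lem:045:5]{\ref*{lem:045}.(\ref*{lem:045:5})}), so $ P_\Lambda f_{k,m}\in S_m(\Lambda) $ by Theorem \ref{thm:005} applied to $ \Lambda $. Writing $ \Gamma=\bigsqcup_{\delta\in\Lambda\backslash\Gamma}\Lambda\delta $ and using $ \chi|_\Lambda=1 $ gives the finite sum $ P_{\Gamma,\chi}f_{k,m}=\sum_{\delta\in\Lambda\backslash\Gamma}\overline{\chi(\delta)}\left(P_\Lambda f_{k,m}\right)\big|_m\delta $, each term of which lies in $ S_m(\Lambda) $ by Lemma \ref{lem:053} (with trivial character, since $ \Lambda\trianglelefteq\Gamma $); thus $ P_{\Gamma,\chi}f_{k,m}\in S_m(\ker\chi) $, and a reindexing $ \gamma\mapsto\gamma\gamma_0 $ shows $ P_{\Gamma,\chi}f_{k,m}\big|_m\gamma_0=\chi(\gamma_0)P_{\Gamma,\chi}f_{k,m} $, establishing \eqref{eq:051} and hence \eqref{thm:020:2}.

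Claim \eqref{thm:020:3} is the core, and the strategy is to unfold the Petersson product and recognize it through Theorem \ref{thm:017}. Setting $ H(z):=f(z)\overline{f_{k,m}(z)}\Im(z)^m $ — which lies in $ L^1(\calH) $ because $ \abs{f}\Im^{m/2} $ is bounded (Lemma \ref{lem:038}) and $ \int_\calH\abs{f_{k,m}}\Im^{m/2}\,dv<\infty $ (Lemma \hyperref[lem:045:4]{\ref*{lem:045}.(\ref*{lem:045:4})}) — I would check that each summand of $ \scal f{P_{\Gamma,\chi}f_{k,m}}_\Gamma $ equals $ H(\gamma.z) $, using $ f\big|_m\gamma=\chi(\gamma)f $, \eqref{eq:056}, and $ \abs{cz+d}=\abs{\eta_\gamma(z)}^2 $ (so the metaplectic factors $ \eta_\gamma^{-2m} $ disappear and $ \chi $ cancels $ \overline\chi $). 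Since $ \gamma.z $ depends only on the image of $ \gamma $ in $ \mathrm{PSL}_2(\bbR) $, summing produces a factor $ \varepsilon_\Gamma $ that cancels the $ \varepsilon_\Gamma^{-1} $ in \eqref{eq:055}, and the (absolutely convergent) unfolding collapses $ \Gamma\backslash\calH $ onto $ \calH $:
\[ \scal f{P_{\Gamma,\chi}f_{k,m}}_\Gamma=\int_\calH f(z)\overline{f_{k,m}(z)}\Im(z)^m\,dv(z). \]
Lifting via \eqref{eq:010} and \eqref{eq:035}, the right $ K $-types $ \chi_m $ of $ F_f $ and $ F_{k,m} $ cancel and the right-hand side becomes $ \int_\Met F_f\overline{F_{k,m}}\,d\mu_\Met $, which by the unfolding \eqref{eq:013} equals $ \scal{F_f}{P_\Lambda F_{k,m}}_{L^2\left(\Lambda\backslash\Met\right)} $. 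As $ F_f\in\calA\left(\Lambda\backslash\Met\right)_m $ by Theorem \ref{thm:005}, Theorem \ref{thm:017} (for $ \Lambda $) evaluates this to $ \frac{4\pi}{\prod_{r=0}^k(m-1+r)}\left(\left(n^+\right)^kF_f\right)(1) $; finally Lemma \hyperref[lem:008:2]{\ref*{lem:008}.(\ref*{lem:008:2})} at $ x=t=0 $, $ y=1 $ computes $ \left(\left(n^+\right)^kF_f\right)(1) $, and the claimed formula drops out after simplifying $ \prod_{r=l+1}^k/\prod_{r=0}^k=1/\prod_{r=0}^l $.

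Claim \eqref{thm:020:4} then follows formally: if $ f\in S_m(\Gamma,\chi) $ is orthogonal to all $ P_{\Gamma,\chi}f_{n,m} $, then by \eqref{thm:020:3} the equations $ \scal f{P_{\Gamma,\chi}f_{n,m}}_\Gamma=0 $, $ n=0,1,2,\dots $, form a triangular system in $ \left(f^{(l)}(i)\right)_{l\geq0} $ whose diagonal coefficients $ (2i)^n\frac{4\pi}{\prod_{r=0}^n(m-1+r)} $ are nonzero, forcing $ f^{(n)}(i)=0 $ for all $ n $ and hence $ f\equiv0 $ by holomorphy; since $ S_m(\Gamma,\chi) $ is a finite-dimensional Hilbert space, triviality of the orthogonal complement gives the spanning statement. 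The main obstacle is the unfolding in \eqref{thm:020:3}: one must carefully track the order-four center $ Z\left(\Met\right) $ and the half-integral metaplectic factors to confirm that each summand is exactly $ H(\gamma.z) $ — so that the center contributes precisely the factor $ \varepsilon_\Gamma $ that cancels the normalization in \eqref{eq:055} — and that the reduction to $ \ker\chi $ is genuinely compatible with Theorem \ref{thm:017}, which is stated only for the trivial character.
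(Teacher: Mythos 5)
Your proposal is correct, and for claims \eqref{thm:020:1}, \eqref{thm:020:2}, and \eqref{thm:020:4} it follows the paper's proof almost verbatim (same reliance on Lemma \hyperref[lem:045:5]{\ref*{lem:045}.(\ref*{lem:045:5})}, Lemma \ref{lem:021}, Theorem \ref{thm:005}, and the triangular induction for the spanning statement). The only genuine divergence is in how you handle the character in \eqref{thm:020:3}. The paper first proves \eqref{thm:020:2}--\eqref{thm:020:4} for $\chi=1$ — where \eqref{thm:020:3} is an immediate chain $\scal f{P_\Gamma f_{k,m}}_\Gamma=\scal{F_f}{P_\Gamma F_{k,m}}_{L^2\left(\Gamma\backslash\Met\right)}$ via the unitarity of $\Phi$, followed by Theorem \ref{thm:017} and Lemma \hyperref[lem:008:2]{\ref*{lem:008}.(\ref*{lem:008:2})} — and then transfers everything to general $\chi$ in one stroke by observing that the projection \eqref{eq:032} carries $P_{\ker\chi}f_{k,m}$ to $P_{\Gamma,\chi}f_{k,m}$. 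You instead treat general $\chi$ from the outset by unfolding the Petersson product over $\calH$ to $\int_\calH f\overline{f_{k,m}}\Im(z)^m\,dv$, lifting to $\int_\Met F_f\overline{F_{k,m}}\,d\mu_\Met$, and refolding over $\ker\chi$ so that Theorem \ref{thm:017} applies. Both routes terminate at the same integral (it is the first display in the proof of Theorem \ref{thm:017}), and your justification of the unfolding via $F_{k,m}\in L^1\left(\Met\right)$ and the boundedness of $\abs f\Im^{m/2}$ is sound — indeed it is the same computation the paper performs later in \eqref{eq:060}. What your variant buys is that the character and the order-four center are handled explicitly and uniformly, with no appeal to the projection \eqref{eq:032} and no need to compare the normalizations of $\scal\spacedcdot\spacedcdot_\Gamma$ and $\scal\spacedcdot\spacedcdot_{\ker\chi}$; what it costs is re-deriving an unfolding that the paper has already packaged inside $\Phi$ and Theorem \ref{thm:017}. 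No gaps.
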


\begin{proof}
	\eqref{thm:020:1} By Lemma \ref{lem:021} it suffices to prove that the series $ P_{\Gamma,\chi} F_{f_{k,m}}=P_{\Gamma,\chi} F_{k,m} $ converges absolutely and uniformly on compact sets in $ \Met $, which is clear from Lemma \hyperref[lem:045:5]{\ref*{lem:045}.(\ref*{lem:045:5})}.
	
	Next, we prove \eqref{thm:020:2}--\eqref{thm:020:4} in the case when $ \chi=1 $:
	
	\eqref{thm:020:2} Since $ F_{P_\Gamma f_{k,m}}=P_\Gamma F_{k,m} $ belongs to $ \calA\left(\Gamma\backslash\Met\right)_m $ by  Lemma \hyperref[lem:045:5]{\ref*{lem:045}.(\ref*{lem:045:5})}, by Theorem \ref{thm:005} $ P_{\Gamma}f_{k,m} $ belongs to $ S_m(\Gamma) $.
	
	\eqref{thm:020:3} Let $ f\in S_m(\Gamma) $. We have, by Theorem \ref{thm:005},
	\begin{align*}
		\scal f{P_\Gamma f_{k,m}}_\Gamma&=\scal{F_f}{F_{P_\Gamma f_{k,m}}}_{L^2\left(\Gamma\backslash\Met\right)}\\
		&\overset{\eqref{eq:022}}=\scal{F_f}{P_\Gamma F_{k,m}}_{L^2\left(\Gamma\backslash\Met\right)}\\
		&\overset{\eqref{eq:023}}=\frac{4\pi}{\prod_{r=0}^{k}(m-1+r)}\left(\left(n^+\right)^kF_f\right)(1)\\
		&\overset{\eqref{eq:009}}=\sum_{l=0}^k\binom kl(2i)^l\frac{4\pi}{\prod_{r=0}^{l}(m-1+r)}f^{(l)}(i).
	\end{align*}
	
	\eqref{thm:020:4} It suffices to show that every $ f\in S_m(\Gamma) $ satisfying $ \scal f{P_\Gamma f_{n,m}}_\Gamma=0 $ for all $ n\in\bbZ_{\geq0} $ is identically zero. Indeed, from \eqref{thm:020:3} it follows by induction on $ n\in\bbZ_{\geq0} $ that such an $ f $ satisfies $ f^{(n)}(i)=0 $ for all $ n\in\bbZ_{\geq0} $, so $ f $ is identically zero since $ f\in\Hol(\calH) $.
	
	Now, since the orthogonal projection $ S_m(\ker\chi)\twoheadrightarrow S_m(\Gamma,\chi) $ given by \eqref{eq:032} maps $ P_{\ker\chi} f_{k,m} $ to $ P_{\Gamma,\chi}f_{k,m} $, the claims \eqref{thm:020:2}--\eqref{thm:020:4} in the case when $ \chi\neq1 $ follow from the proven ones about $ P_{\ker\chi}f_{k,m} $.
\end{proof}

Next, we give a result on the non-vanishing of cusp forms $ P_{\Gamma,\chi} f_{k,m} $ in the case when $ P(\Gamma)\subseteq\SL2(\bbZ) $. Let us denote by $ \M(a,b) $ the median of the beta distribution with parameters $ a,b\in\bbR_{>0} $, i.e., the unique $ \M(a,b)\in\left]0,1\right[ $ such that
\[ \int_0^{\M(a,b)}x^{a-1}(1-x)^{b-1}\,dx=\int_{\M(a,b)}^1x^{a-1}(1-x)^{b-1}\,dx. \]

\begin{Thm}\label{thm:030}
	Let $ N\in\bbZ_{>0} $ and $ k\in\bbZ_{\geq0} $. Let $ \Gamma $ be a subgroup of finite index in $ P^{-1}(\Gamma(N)) $, and let $ \chi:\Gamma\to\bbC^\times $ be a character of finite order. Suppose that
	\begin{equation}\label{eq:052}
	 \chi\big|_{\Gamma\cap Z\left(\Met\right)}=\chi_m\big|_{\Gamma\cap Z\left(\Met\right)}  
	\end{equation}
	(otherwise $ S_m(\Gamma,\chi)=0 $ by \eqref{eq:051}). If
	\begin{equation}\label{eq:031}
	N>\frac{4\,\M\left(\frac k2+1,\frac m2-1\right)^{\frac12}}{1-\M\left(\frac k2+1,\frac m2-1\right)}, 
	\end{equation}
	then $ P_{\Gamma,\chi}f_{k,m} $ is not identically zero.  	
\end{Thm}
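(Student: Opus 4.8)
The plan is to transfer the problem to $\Met$ and to apply the integral non-vanishing criterion \cite[Lemma 2-1]{MuicIJNT} to the matrix coefficient $F_{k,m}$. By Lemma \ref{lem:021}, $P_{\Gamma,\chi}f_{k,m}\not\equiv0$ on $\calH$ if and only if $P_{\Gamma,\chi}F_{k,m}\not\equiv0$ on $\Met$, so it suffices to prove the latter. Write $\mathfrak m:=\M\!\left(\frac k2+1,\frac m2-1\right)$ and, for $T>0$, let $C_T:=\left\{\kappa_{\theta_1}h_t\kappa_{\theta_2}:\theta_1,\theta_2\in\bbR,\ 0\le t\le T\right\}$ be the Cartan ball of radius $T$. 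The criterion will produce $P_{\Gamma,\chi}F_{k,m}\not\equiv0$ as soon as I can choose $T$ so that (i) $C_T$ carries more than half of the $L^1$-mass of $F_{k,m}$, and (ii) the translates $\gamma C_T$, with $\gamma$ ranging over the nontrivial cosets $\left(\Gamma\cap Z\left(\Met\right)\right)\gamma$, are disjoint from $C_T$ (and hence from one another). Since $F_{k,m}$ transforms on the left as $\chi_{m+2k}$ (Lemma \ref{lem:045}), which restricts to $\chi_m$ on $Z\left(\Met\right)$, hypothesis \eqref{eq:052} guarantees $\overline{\chi(z)}F_{k,m}(z\,\spacedcdot\,)=F_{k,m}$ for $z\in\Gamma\cap Z\left(\Met\right)$; this makes the central terms of $P_{\Gamma,\chi}F_{k,m}$ coherent and lets me run the whole argument modulo $\Gamma\cap Z\left(\Met\right)$, which is how the center is absorbed.

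For (i) I use the Cartan-decomposition formula of Lemma \ref{lem:045} together with the Haar measure \eqref{eq:008}. As $\abs{F_{k,m}(\kappa_{\theta_1}h_t\kappa_{\theta_2})}=\tanh^k(t)/\cosh^m(t)$ depends only on $t$, the substitution $x=\tanh^2(t)$ from the proof of Lemma \ref{lem:008} gives
\[ \int_{C_T}\abs{F_{k,m}}\,d\mu_\Met=4\pi\int_0^{\tanh^2(T)}x^{\frac k2}(1-x)^{\frac m2-2}\,dx,\qquad\int_\Met\abs{F_{k,m}}\,d\mu_\Met=4\pi\int_0^1x^{\frac k2}(1-x)^{\frac m2-2}\,dx. \]
Hence $\int_{C_T}\abs{F_{k,m}}>\frac12\int_\Met\abs{F_{k,m}}$ holds precisely when $\tanh^2(T)>\mathfrak m$, by the very definition of the median of the beta distribution with parameters $\frac k2+1$ and $\frac m2-1$. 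This fixes the lower end of the admissible range of $T$.

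For (ii) I record that $t(\sigma):=\tfrac12 d_\calH(i,\sigma.i)$ equals the Cartan parameter of $\sigma$, that $t(\sigma^{-1})=t(\sigma)$, and that $t$ is subadditive, $t(\sigma_1\sigma_2)\le t(\sigma_1)+t(\sigma_2)$, by the triangle inequality for the $\Met$-invariant metric on $\calH$. Thus if $\gamma\in\Gamma\setminus Z\left(\Met\right)$ and $\gamma C_T\cap C_T\neq\emptyset$, writing a common element as $\gamma x$ with $x,\gamma x\in C_T$ yields $t(\gamma)=t\bigl((\gamma x)x^{-1}\bigr)\le 2T$, so $\norm{P(\gamma)}_F^2=2\cosh\!\bigl(2t(\gamma)\bigr)\le 2\cosh(4T)$, where $\norm{\cdot}_F$ is the Frobenius norm. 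On the other hand $P(\gamma)=\begin{pmatrix}a&b\\c&d\end{pmatrix}\in\Gamma(N)\setminus\{\pm I\}$ has $b,c$ divisible by $N$ and not both zero (else $P(\gamma)=\pm I$), so $b^2+c^2\ge N^2$, while $a,d\equiv1\pmod N$ are nonzero, so $a^2+d^2\ge2$; hence $\norm{P(\gamma)}_F^2\ge N^2+2$, the minimum over $\Gamma(N)\setminus\{\pm I\}$ being attained at $\begin{pmatrix}1&N\\0&1\end{pmatrix}$. Therefore the disjointness (ii) is guaranteed once $2\cosh(4T)<N^2+2$, i.e.\ $\cosh(4T)<1+\frac{N^2}2$.

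It remains to check that the two constraints $\tanh^2(T)>\mathfrak m$ and $\cosh(4T)<1+\frac{N^2}2$ can be met simultaneously. At the threshold $\tanh^2(T)=\mathfrak m$ one has $\cosh(2T)=\frac{1+\mathfrak m}{1-\mathfrak m}$ and $\cosh(4T)=2\cosh^2(2T)-1$, so the second constraint reads $\frac{(1+\mathfrak m)^2}{(1-\mathfrak m)^2}<1+\frac{N^2}4$, which simplifies to $N^2>\frac{16\mathfrak m}{(1-\mathfrak m)^2}$, i.e.\ exactly \eqref{eq:031}. Since \eqref{eq:031} is strict, by continuity I can take $T$ slightly above the threshold while keeping $\cosh(4T)<1+\frac{N^2}2$, so both (i) and (ii) hold and the criterion applies. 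I expect step (ii)---turning the overlap $\gamma C_T\cap C_T\neq\emptyset$ into the sharp displacement bound via subadditivity of $t(\spacedcdot)$, extracting the arithmetic minimum $N^2+2$ from $\Gamma(N)$, and correctly accounting for the center through \eqref{eq:052}---to be the main obstacle; the mass computation of (i) reuses Lemma \ref{lem:008}, and the final matching with \eqref{eq:031} is an elementary manipulation of hyperbolic identities.
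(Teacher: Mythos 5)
Your proof follows exactly the route the paper takes: the paper's own argument is a two-sentence reduction to the integral non-vanishing criterion of \cite[Lemma 2-1]{MuicIJNT} applied to $P_{\Gamma,\chi}F_{k,m}$ with $\Gamma_2=\Gamma\cap Z\left(\Met\right)$, with the compact set found ``exactly as in the proof of \cite[Proposition 6-7]{ZunarNonVan2017}'' --- and your Cartan-ball construction, the beta-median mass computation via $x=\tanh^2(t)$, the coherence of the central terms via \eqref{eq:052}, and the matching of $\norm{P(\gamma)}_F^2\geq N^2+2$ against $2\cosh(4T)$ are precisely the content of that cited proof, reproduced correctly. The only quibble is your justification of $a^2+d^2\geq2$: for $N=1$ the congruence $a\equiv d\equiv1\pmod N$ is vacuous and $\begin{pmatrix}0&-1\\1&0\end{pmatrix}\in\Gamma(1)$ has squared Frobenius norm $2<N^2+2$, so the arithmetic minimum fails in that (admittedly marginal) case, which \eqref{eq:031} does not exclude when $k=0$ and $m$ is large; this caveat is inherited from the cited source and does not affect $N\geq2$.
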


\begin{proof}
	It suffices to prove the non-vanishing of $ F_{P_{\Gamma,\chi}f_{k,m}}=P_{\Gamma,\chi}F_{k,m} $. We do this by applying to $ P_{\Gamma,\chi}F_{k,m} $ the non-vanishing criterion \cite[Lemma 2-1]{MuicIJNT} with $ \Gamma_1=\{1\} $ and $ \Gamma_2=\Gamma\cap Z\left(\Met\right) $: $ F_{k,m} $ satisfies the condition (1) of \cite[Lemma 2-1]{MuicIJNT} since it transforms on the right as $ \chi_m $  and \eqref{eq:052} holds. A compact set $ C $ satisfying the conditions (2)--(3) of \cite[Lemma 2-1]{MuicIJNT} can be found using \eqref{eq:031} exactly as in the proof of \cite[Proposition 6-7]{ZunarNonVan2017}.
\end{proof}

To illustrate the strength of Theorem \ref{thm:030}, we can use some well-known properties of $ \M(a,b) $ \cite[Lemma 6-12]{ZunarNonVan2017} to obtain the following variant of \cite[Corollary 6-18]{ZunarNonVan2017}.

\begin{Cor}\label{cor:070}
	Let $ N\in\bbZ_{>0} $ and $ k\in\bbZ_{\geq0} $. Let $ \Gamma $ be a subgroup of finite index in $ P^{-1}(\Gamma(N)) $. Let $ \chi:\Gamma\to\bbC^\times $ be a character of finite order such that \eqref{eq:052} holds. Then, $ P_{\Gamma,\chi}f_{k,m} $ is not identically zero if one of the following holds:
	\begin{enumerate}
		\item $ k=0 $ and $ N>4\cdot2^{\frac1{m-2}}\sqrt{4^{\frac1{m-2}}-1} $
		\item $ m=4 $ and $ {\displaystyle N>\frac4{2^{\frac1{k+2}}-2^{-\frac1{k+2}}} } $
		\item $ 0<k\leq m-4 $ and $ N\geq4\sqrt{\frac{k+2}{m-2}\left(1+\frac{k+2}{m-2}\right)} $
		\item $ 0<m-4\leq k $ and $ N\geq4\sqrt{\frac{k}{m-4}\left(1+\frac{k}{m-4}\right)} $.
	\end{enumerate}
\end{Cor}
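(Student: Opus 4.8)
The plan is to derive each of the four sufficient conditions from hypothesis \eqref{eq:031} of Theorem \ref{thm:030} by supplying a suitable value or upper bound for the median $ \M\left(\frac k2+1,\frac m2-1\right) $. Write $ a:=\frac k2+1 $ and $ b:=\frac m2-1 $, and note that the function $ g(t):=\frac{4\sqrt t}{1-t} $ is strictly increasing on $ \left]0,1\right[ $, since $ g'(t)=2t^{-1/2}(1-t)^{-2}(1+t)>0 $. Thus \eqref{eq:031} reads $ N>g\left(\M(a,b)\right) $, and any bound $ \M(a,b)\leq M_0 $ reduces it to the weaker condition $ N>g(M_0) $; if moreover the bound is strict, then already $ N\geq g(M_0) $ suffices. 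I would invoke the properties of $ \M $ from \cite[Lemma 6-12]{ZunarNonVan2017} to compute or estimate $ \M(a,b) $ in each case, and then simplify $ g $ algebraically.

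For the exact cases (1) and (2) I would use the closed forms $ \M(1,b)=1-2^{-1/b} $ and $ \M(a,1)=2^{-1/a} $. In (1), $ k=0 $ gives $ a=1 $, so $ \M=1-4^{-1/(m-2)} $, and a short manipulation gives $ g(\M)=4\cdot2^{1/(m-2)}\sqrt{4^{1/(m-2)}-1} $. In (2), $ m=4 $ gives $ b=1 $, so $ \M=4^{-1/(k+2)} $, and writing $ w:=2^{-1/(k+2)} $ one gets $ g(\M)=\frac{4w}{1-w^2}=\frac4{2^{1/(k+2)}-2^{-1/(k+2)}} $. Since no estimation occurs, the strict thresholds in (1)--(2) match \eqref{eq:031} exactly.

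For the estimated cases (3) and (4) I would compare the median with the mean $ \frac a{a+b} $ and the mode $ \frac{a-1}{a+b-2} $: one has $ \M(a,b)<\frac a{a+b} $ when $ a<b $, and $ \M(a,b)<\frac{a-1}{a+b-2} $ when $ a>b $ (with $ a,b>1 $), both available from \cite[Lemma 6-12]{ZunarNonVan2017}. The relevant elementary identity is $ g\left(\frac\rho{1+\rho}\right)=4\sqrt{\rho(1+\rho)} $ for every $ \rho>0 $. In (3), $ 0<k\leq m-4 $ forces $ a\leq b $, and taking $ \rho=\frac ab=\frac{k+2}{m-2} $ gives $ g\left(\frac a{a+b}\right)=4\sqrt{\frac{k+2}{m-2}\left(1+\frac{k+2}{m-2}\right)} $; in (4), $ 0<m-4\leq k $ forces $ a\geq b>1 $, and taking $ \rho=\frac{a-1}{b-1}=\frac k{m-4} $ gives $ g\left(\frac{a-1}{a+b-2}\right)=4\sqrt{\frac k{m-4}\left(1+\frac k{m-4}\right)} $.

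The one delicate point---and the main, if modest, obstacle---is justifying the non-strict hypotheses $ N\geq\cdots $ in (3)--(4), given that \eqref{eq:031} is strict. This is where the half-integrality of $ m $ enters: since $ m\in\frac52+\bbZ_{\geq0} $, the number $ m-4 $ is never an integer, so neither $ k\leq m-4 $ nor $ m-4\leq k $ can be an equality, whence $ a\neq b $ and the median bounds above are strict. Therefore $ g(\M(a,b))<g(M_0)\leq N $ in both cases, and Theorem \ref{thm:030} applies. What remains is only the routine algebraic verification of the four simplifications of $ g $.
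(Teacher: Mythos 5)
Your derivation is correct and follows exactly the route the paper intends: the paper gives no written proof, merely deferring to Theorem \ref{thm:030} together with the median properties in \cite[Lemma 6-12]{ZunarNonVan2017}, and your computation of $g(\M)$ in the closed-form cases and your median--mean/median--mode bounds in the other two reproduce the four thresholds. Your observation that $m-4\notin\bbZ$ makes the inequalities $k\leq m-4$ and $m-4\leq k$ automatically strict, so that the non-strict hypotheses in (3)--(4) still force the strict condition \eqref{eq:031}, is a detail the paper leaves implicit and is handled correctly.
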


\section{Cusp forms \eqref{eq:074}}\label{sec:047}

Throughout this section, let $ \Gamma $ be a discrete subgroup of finite covolume in $ \Met $, $ \chi:\Gamma\to\bbC^\times $ a character of finite order, and $ m\in\frac52+\bbZ_{\geq0} $.

For every $ k\in\bbZ_{\geq0} $ and $ \xi\in\calH $, we define $ \delta_{k,m,\xi}:\calH\to\bbC $,
\begin{equation}\label{eq:057}
\delta_{k,m,\xi}(z):=\frac{(2i)^m}{4\pi}\left(\prod_{r=0}^{k}(m-1+r)\right)\frac{1}{\left(z-\overline\xi\right)^{m+k}}.
\end{equation}
Note that $ \delta_{k,m,\xi}(z)=\left(\frac{d}{d\overline\xi}\right)^k\delta_{0,m,\xi}(z) $.

\begin{Prop}\label{prop:026}
	Let $ k\in\bbZ_{\geq0} $ and $ \xi\in\calH $. Then, the Poincar\' e series
	\begin{equation}\label{eq:054}
	\Delta_{\Gamma,k,m,\xi,\chi}(z):=\left(P_{\Gamma,\chi}\delta_{k,m,\xi}\right)(z)=\frac{(2i)^m}{4\pi}\left(\prod_{r=0}^{k}(m-1+r)\right)\sum_{\gamma\in\Gamma}\frac{\overline{\chi(\gamma)}}{\left(\gamma.z-\overline\xi\right)^{m+k}}\eta_\gamma(z)^{-2m} 
	\end{equation}
	converges absolutely and uniformly on compact sets in $ \calH $ and belongs to $ S_m(\Gamma,\chi) $. 
\end{Prop}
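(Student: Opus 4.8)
The plan is to prove the statement in two reductions: first establish convergence and membership for $k=0$ and arbitrary $\xi\in\calH$, and then pass to general $k$ by differentiating $k$ times in $\overline\xi$. For the base case $k=0$ I would reduce further to $\xi=i$ by conjugation. Since $P\colon\Met\to\SL2(\bbR)$ is surjective and $\SL2(\bbR)$ acts transitively on $\calH$, pick $\sigma\in\Met$ with $\sigma.i=\xi$, put $\Gamma':=\sigma^{-1}\Gamma\sigma$ (again discrete of finite covolume) and let $\chi^\sigma$ be as in Lemma~\ref{lem:053}. Writing $g_{\sigma^{-1}}=\begin{pmatrix}A&B\\C&D\end{pmatrix}$, a direct computation with the slash action gives $\bigl(\delta_{0,m,i}\big|_m\sigma^{-1}\bigr)(z)=\tfrac{1}{(A+iC)^{m}}\,\delta_{0,m,\xi}(z)$, where one uses that $\sigma^{-1}.\xi=i$ forces $\overline\xi=-\tfrac{B+iD}{A+iC}$, so that $(A+iC)z+(B+iD)=(A+iC)(z-\overline\xi)$. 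Hence $\delta_{0,m,\xi}=c\,\bigl(\delta_{0,m,i}\big|_m\sigma^{-1}\bigr)$ with $c=(A+iC)^m\in\bbC^\times$.

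Re-indexing the defining sum over $\Gamma$ by $\gamma=\sigma\gamma'\sigma^{-1}$ and using $\sigma^{-1}\gamma=\gamma'\sigma^{-1}$ turns this into
\[
\Delta_{\Gamma,0,m,\xi,\chi}=c\,\bigl(\Delta_{\Gamma',0,m,i,\chi^\sigma}\big|_m\sigma^{-1}\bigr).
\]
Now $\delta_{0,m,i}=\tfrac{m-1}{4\pi}f_{0,m}$, so $\Delta_{\Gamma',0,m,i,\chi^\sigma}=\tfrac{m-1}{4\pi}P_{\Gamma',\chi^\sigma}f_{0,m}$ converges absolutely and uniformly on compacta and lies in $S_m(\Gamma',\chi^\sigma)$ by Theorem~\ref{thm:020}. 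Applying the isometry $\,\cdot\,\big|_m\sigma^{-1}$ of Lemma~\ref{lem:053}, which carries $S_m(\Gamma',\chi^\sigma)$ onto $S_m(\Gamma,\chi)$, yields $\Delta_{\Gamma,0,m,\xi,\chi}\in S_m(\Gamma,\chi)$, the convergence being preserved since the slash action only multiplies by a fixed non-vanishing holomorphic factor and precomposes with a biholomorphism of $\calH$.

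For general $k$, absolute and locally uniform convergence in $z$ follows from the case $k=0$ by comparison: since $\overline\xi$ lies in the lower half-plane, $\Im(\gamma.z-\overline\xi)=\Im(\gamma.z)+\Im(\xi)\ge\Im(\xi)>0$, whence $\abs{\gamma.z-\overline\xi}\ge\Im(\xi)$ and
\[
\frac{\abs{\eta_\gamma(z)}^{-2m}}{\abs{\gamma.z-\overline\xi}^{m+k}}\le\frac{1}{\Im(\xi)^{k}}\cdot\frac{\abs{\eta_\gamma(z)}^{-2m}}{\abs{\gamma.z-\overline\xi}^{m}},
\]
uniformly for $z$ in a compact subset of $\calH$; the right-hand side is a constant multiple of the summand of the already convergent series $\Delta_{\Gamma,0,m,\xi,\chi}$.

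It remains to show $\Delta_{\Gamma,k,m,\xi,\chi}\in S_m(\Gamma,\chi)$ for $k>0$, and this I expect to be the main obstacle. I would use $\delta_{k,m,\xi}=\bigl(\tfrac{d}{d\overline\xi}\bigr)^k\delta_{0,m,\xi}$; since the slash action affects only the $z$-variable, termwise differentiation gives $\Delta_{\Gamma,k,m,\xi,\chi}=\bigl(\tfrac{d}{d\overline\xi}\bigr)^k\Delta_{\Gamma,0,m,\xi,\chi}$, provided this interchange of $\tfrac{d}{d\overline\xi}$ with the summation is justified. To see that the result stays in $S_m(\Gamma,\chi)$, regard $w=\overline\xi$, ranging over the lower half-plane, as the variable of the map $w\mapsto\Delta_{\Gamma,0,m,\xi,\chi}$ with values in the finite-dimensional Hilbert space $S_m(\Gamma,\chi)$. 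Each evaluation $w\mapsto\Delta_{\Gamma,0,m,\xi,\chi}(z_0)$ is holomorphic in $w$ (termwise holomorphy together with local uniform convergence in $w$, shown by the same comparison estimate), so this map is holomorphic into $S_m(\Gamma,\chi)$; because the target is finite-dimensional, its $w$-derivatives again take values in $S_m(\Gamma,\chi)$ and are computed by pointwise differentiation in $z$, i.e.\ they coincide with $\Delta_{\Gamma,k,m,\xi,\chi}$. The technical heart is thus the justification of differentiating the Poincar\'e series under the summation sign in the parameter $\overline\xi$, for which the completeness and finite-dimensionality of $S_m(\Gamma,\chi)$ are what one leans on.
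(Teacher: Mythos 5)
Your argument is correct, but for $k>0$ it takes a genuinely different route from the paper. The paper handles all $k$ at once at the point $\xi=i$: inverting the binomial identity $f_{k,m}=\sum_{l=0}^k\binom kl(-2i)^l\frac{4\pi}{\prod_{r=0}^{l}(m-1+r)}\delta_{l,m,i}$ expresses each $\delta_{k,m,i}$ as a finite linear combination of $f_{0,m},\dots,f_{k,m}$, so convergence and membership for every $k$ follow directly from Theorem \ref{thm:020}; general $\xi$ is then reached by the conjugation identity \eqref{eq:049}, which is exactly your $k=0$ reduction with $\sigma=n_xa_y$ and the constant recorded explicitly. You instead use Theorem \ref{thm:020} only through $\delta_{0,m,i}=\frac{m-1}{4\pi}f_{0,m}$ and generate the higher $k$ by differentiating the $k=0$ series in the parameter $\overline\xi$. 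That step is sound as you sketch it: the bound $\abs{\gamma.z-\overline\xi}\geq\Im(\gamma.z)+\Im(\xi)\geq\Im(\xi)$ gives local uniform convergence in $\overline\xi$ as well as in $z$, so Weierstrass' theorem justifies termwise differentiation, and weak holomorphy of $w\mapsto\Delta_{\Gamma,0,m,\xi,\chi}$ against the evaluation functionals (which span the dual of the finite-dimensional space $S_m(\Gamma,\chi)$) makes it a holomorphic map into $S_m(\Gamma,\chi)$, whose derivatives therefore remain in that space. What the paper's algebraic route buys is that no interchange of limit and sum ever needs to be justified; what yours buys is that only the single matrix coefficient $f_{0,m}$ is needed as input, and your differentiation-in-$\xi$ mechanism is precisely the one the author alludes to in the introduction and deploys later (Lemma \ref{lem:084}) for the inner-product identities. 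One harmless caveat: with the fixed branch \eqref{eq:091} and $m$ half-integral, $(uv)^m=u^mv^m$ only up to a sign, so your constant $c=(A+iC)^m$ may be off by $\pm1$; since only $c\neq0$ matters, this does not affect the conclusion.
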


\begin{proof}
	This can be proved by applying the obvious half-integral weight variant of \cite[Theorems 2.6.6.(1) and 2.6.7]{miyake}. We give an alternative proof.
	Note that
	\begin{equation}\label{eq:024}
	f_{k,m}=\sum_{l=0}^k\binom kl(-2i)^l\frac{4\pi}{\prod_{r=0}^{l}(m-1+r)}\delta_{l,m,i},\qquad k\in\bbZ_{\geq0},
	\end{equation}
	hence by the binomial inversion formula
	\[ \delta_{k,m,i}=\frac{\prod_{r=0}^{k}(m-1+r)}{4\pi(2i)^k}\sum_{l=0}^k\binom kl(-1)^lf_{l,m},\qquad k\in\bbZ_{\geq0}, \]
	so the claim in the case when $ \xi=i $ follows from Theorem \hyperref[thm:020:1]{\ref*{thm:020}.(\ref*{thm:020:1})} and \eqref{thm:020:2}. Now the claim for general $ \xi=x+iy\in\calH $ (with $ x,y\in\bbR $) is clear, using Lemma \ref{lem:053}, from the identity
	\begin{equation}\label{eq:049}
	\Delta_{\Gamma,k,m,\xi,\chi}=y^{-\frac m2-k}\Delta_{\left(n_xa_y\right)^{-1}\Gamma n_xa_y,k,m,i,\chi^{n_xa_y}}\big|_m\left(n_xa_y\right)^{-1},
	\end{equation}
	which is easily checked by following definitions.
\end{proof}

The following technical lemmas will be used in our analytic proof of Theorem \ref{thm:048}.

\begin{Lem}\label{lem:064}
	Let $ (X,dx) $ be a measure space. Let $ D $ be a domain in $ \bbC $. Suppose that $ f:D\times X\to\bbC $ is a measurable function with the following properties:
	\begin{enumerate}
		\item For every $ x\in X $, $ f(\spacedcdot,x) $ is holomorphic on $ D $.
		\item\label{lem:064:2} For every circle $ C\subseteq D $, $ \int_{C\times X}\abs{f(z,x)}\,d(z,x)<\infty $.
	\end{enumerate} 
	Then, $ F:D\to\bbC $, 
	\begin{equation}\label{eq:063}
	F(z):=\int_Xf(z,x)\,dx,
	\end{equation}
	is well-defined and holomorphic on $ D $, and we have
	\begin{equation}\label{eq:062}
	F^{(k)}(z)=\int_X\left(\frac d{d\zeta}\right)^kf(\zeta,x)\Big|_{\zeta=z}\,dx,\qquad z\in D,\ k\in\bbZ_{>0}.
	\end{equation}
\end{Lem}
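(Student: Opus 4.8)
The plan is to reduce the whole statement to Cauchy's integral formula applied separately in the variables $z$ and $x$, with Fubini's theorem interchanging the two integrations; holomorphy and the derivative formula \eqref{eq:062} then come out together.

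First I would check that $F$ is well-defined at \emph{every} point of $D$, i.e.\ that $f(z_0,\spacedcdot)\in L^1(X)$ for each $z_0\in D$, not merely for almost every $z_0$ on some circle. Fix such a $z_0$ and a radius $r>0$ with $\overline{B(z_0,r)}\subseteq D$, and let $C=\partial B(z_0,r)$. The mean value property of the holomorphic function $f(\spacedcdot,x)$ gives $\abs{f(z_0,x)}\leq\frac1{2\pi}\int_0^{2\pi}\abs{f(z_0+re^{i\theta},x)}\,d\theta$; integrating in $x$ and applying Tonelli together with hypothesis \eqref{lem:064:2} for the circle $C$ bounds $\int_X\abs{f(z_0,x)}\,dx$ by $(2\pi r)^{-1}\int_{C\times X}\abs{f}\,d(z,x)<\infty$. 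The same estimate applied to $\abs{F(w)}\leq\int_X\abs{f(w,x)}\,dx$ shows that $F$ restricted to any circle $C\subseteq D$ is integrable with respect to arc length.

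Next, fixing $z_0\in D$ and choosing $r$ with $\overline{B(z_0,2r)}\subseteq D$, I would set $C=\partial B(z_0,2r)$ and use, for each fixed $x$ and each $k\in\bbZ_{\geq0}$, the Cauchy integral formula for derivatives: $\left(\frac d{d\zeta}\right)^kf(\zeta,x)\big|_{\zeta=z}=\frac{k!}{2\pi i}\oint_C\frac{f(w,x)}{(w-z)^{k+1}}\,dw$ for $z\in B(z_0,r)$. On this smaller disk one has the uniform lower bound $\abs{w-z}\geq r$ for $w\in C$, so the integrand is dominated by $r^{-(k+1)}\abs{f(w,x)}$, which is integrable on $C\times X$ by \eqref{lem:064:2}. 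Fubini's theorem then allows the $w$- and $x$-integrals to be interchanged, yielding $\int_X\left(\frac d{d\zeta}\right)^kf(\zeta,x)\big|_{\zeta=z}\,dx=\frac{k!}{2\pi i}\oint_C\frac{F(w)}{(w-z)^{k+1}}\,dw$ for $z\in B(z_0,r)$.

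Finally, writing $g:=F\big|_C$, which lies in $L^1(C)$ by the first step, I would invoke the standard fact that the Cauchy-type integral $z\mapsto\frac1{2\pi i}\oint_C\frac{g(w)}{w-z}\,dw$ is holomorphic off $C$, with derivatives obtained by differentiating the kernel under the integral sign. Taking $k=0$ in the display of the previous paragraph identifies $F$ on $B(z_0,r)$ with this Cauchy-type integral, so $F$ is holomorphic there; as $z_0$ was arbitrary, $F$ is holomorphic on $D$, and its $k$th derivative equals $\frac{k!}{2\pi i}\oint_C\frac{g(w)}{(w-z)^{k+1}}\,dw$, which is exactly the right-hand side computed above, giving \eqref{eq:062}. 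I expect the only genuine subtlety to be the passage from a pointwise Cauchy formula to a statement valid on a whole neighborhood of $z_0$: it is the uniform separation $\abs{w-z}\geq r$ on the smaller disk $B(z_0,r)$ inside the circle of radius $2r$ that supplies the integrable domination needed to apply Fubini uniformly in $z$, after which everything reduces to routine properties of Cauchy-type integrals.
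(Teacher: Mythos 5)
Your proof is correct, and for the new content of the lemma --- the derivative formula \eqref{eq:062} --- it is essentially the paper's argument: the Cauchy integral formula for derivatives combined with Fubini's theorem, justified by the domination $\abs{w-z}^{-(k+1)}\leq r^{-(k+1)}$ on the contour together with hypothesis \eqref{lem:064:2}. The only difference is that for the well-definedness and holomorphy of $F$ the paper simply cites \cite[Lemma 6.1.5]{miyake}, whereas you reprove that part from scratch via the sub-mean-value inequality and the standard properties of Cauchy-type integrals of an $L^1$ boundary density; this makes your write-up self-contained but does not change the underlying method.
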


\begin{proof}
	Without \eqref{eq:062}, this is \cite[Lemma 6.1.5]{miyake}. To prove \eqref{eq:062}, let $ z\in D $ and fix $ \delta\in\bbR_{>0} $ such that $ \left\{\zeta\in\bbC:\abs{\zeta-z}\leq\delta\right\}\subseteq D $. Let $ k\in\bbZ_{>0} $. We have
	\begin{align*}
		F^{(k)}(z)&\ =\ \frac{k!}{2\pi i}\int_{\abs{\zeta-z}=\delta}\frac{F(\zeta)}{(\zeta-z)^{k+1}}\,d\zeta\\
		&\overset{\eqref{eq:063}}=\int_X\left(\frac{k!}{2\pi i}\int_{\abs{\zeta-z}=\delta}\frac{f(\zeta,x)}{(\zeta-z)^{k+1}}\,d\zeta\right)\,dx\\
		&\ =\ \int_X\left(\frac d{d\zeta}\right)^kf(\zeta,x)\Big|_{\zeta=z}\,dx
	\end{align*}
	by applying the Cauchy integral formula for derivatives in the first and the last, and Fubini's theorem in the second equality.
\end{proof}

\begin{Lem}\label{lem:084}
	Let $ f\in S_m(\Gamma,\chi) $. Then, the function $ I_f:\calH\to\bbC $,
	\[ I_f(\xi):=\scal f{\Delta_{\Gamma,0,m,\xi,\chi}}_\Gamma, \]
	is holomorphic, and $ I_f^{(k)}(\xi)=\scal f{\Delta_{\Gamma,k,m,\xi,\chi}}_\Gamma $ for all $ \xi\in\calH $ and $ k\in\bbZ_{>0}$.
\end{Lem}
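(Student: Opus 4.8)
The plan is to unfold the Petersson inner product so that $I_f$ becomes an integral over $\calH$ of a kernel that depends holomorphically on $\xi$, and then to feed this into Lemma \ref{lem:064}. First I would unfold $\scal f{\Delta_{\Gamma,0,m,\xi,\chi}}_\Gamma$. Writing $\Delta_{\Gamma,0,m,\xi,\chi}=\sum_{\gamma\in\Gamma}\overline{\chi(\gamma)}\,\delta_{0,m,\xi}\big|_m\gamma$ and inserting $\chi(\gamma)f(z)=\bigl(f\big|_m\gamma\bigr)(z)=f(\gamma.z)\eta_\gamma(z)^{-2m}$, the $\gamma$-summand of the integrand becomes $f(\gamma.z)\overline{\delta_{0,m,\xi}(\gamma.z)}\bigl(\eta_\gamma(z)^{-2m}\,\overline{\eta_\gamma(z)^{-2m}}\,\Im(z)^m\bigr)$. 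Since $\abs{\eta_\gamma(z)^2}=\abs{c_\gamma z+d_\gamma}$ and \eqref{eq:056} give $\eta_\gamma(z)^{-2m}\overline{\eta_\gamma(z)^{-2m}}\Im(z)^m=\Im(\gamma.z)^m$, this summand equals $\Phi(\gamma.z)$ for $\Phi(w):=f(w)\overline{\delta_{0,m,\xi}(w)}\Im(w)^m$. As $\Gamma\cap Z\left(\Met\right)$ is exactly the kernel of the action of $\Gamma$ on $\calH$, the factor $\varepsilon_\Gamma^{-1}$ cancels the corresponding multiplicity and the standard unfolding yields
\begin{equation*}
I_f(\xi)=\int_\calH f(z)\,\overline{\delta_{0,m,\xi}(z)}\,\Im(z)^m\,dv(z).
\end{equation*}

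I would then apply Lemma \ref{lem:064} with holomorphic variable $\xi$ ranging over $D:=\calH$, integration space $X:=\calH$ equipped with the measure $\Im(z)^m\,dv(z)$, and kernel $\Psi(\xi,z):=f(z)\overline{\delta_{0,m,\xi}(z)}$. For the first hypothesis, observe that $\overline{\delta_{0,m,\xi}(z)}$ is a constant times $(\overline z-\xi)^{-m}$; as $\xi$ runs over $\calH$ with $z\in\calH$ fixed, $\overline z-\xi$ stays in the open lower half-plane, so it avoids the branch cut $(-\infty,0]$ of $w\mapsto w^{-m}$, whence $\Psi(\spacedcdot,z)$ is holomorphic on $\calH$.

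The only genuinely technical point is the second hypothesis of Lemma \ref{lem:064}, the uniform integrability over a circle $C\subseteq\calH$. Here I would use $\abs{\delta_{0,m,\xi}(z)}=\tfrac{2^m(m-1)}{4\pi}\abs{z-\overline\xi}^{-m}$ together with the bound $\abs{f(z)}\Im(z)^{\frac m2}\leq M<\infty$ from Lemma \ref{lem:038}. Writing $z=x+iy$ and $\xi=\xi_1+i\xi_2$ so that $\abs{z-\overline\xi}^2=(x-\xi_1)^2+(y+\xi_2)^2$, I would estimate
\begin{equation*}
\int_X\abs{\Psi(\xi,z)}\,\Im(z)^m\,dv(z)\leq C'\int_0^\infty y^{\frac m2-2}\left(\int_{-\infty}^\infty\frac{dx}{\bigl((x-\xi_1)^2+(y+\xi_2)^2\bigr)^{\frac m2}}\right)dy=C''\int_0^\infty\frac{y^{\frac m2-2}}{(y+\xi_2)^{m-1}}\,dy.
\end{equation*}
The last integral converges because $m>2$ controls the behaviour at $y=0$ and $m>0$ that at $y=\infty$; moreover it is a decreasing function of $\xi_2=\Im(\xi)$, hence bounded by its value at $\min_{\xi\in C}\Im(\xi)>0$ uniformly for $\xi\in C$. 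Integrating over the compact $C$ gives $\int_{C\times X}\abs{\Psi}<\infty$, as required.

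Finally, Lemma \ref{lem:064} shows that $I_f$ is holomorphic and that $I_f^{(k)}(\xi)=\int_\calH f(z)\,\bigl(\tfrac d{d\zeta}\bigr)^k\overline{\delta_{0,m,\zeta}(z)}\big|_{\zeta=\xi}\,\Im(z)^m\,dv(z)$. Using the identity $\delta_{k,m,\xi}=\bigl(\tfrac d{d\overline\xi}\bigr)^k\delta_{0,m,\xi}$ (the remark after \eqref{eq:057}) and the fact that complex conjugation interchanges $\tfrac d{d\overline\xi}$ and $\tfrac d{d\zeta}$, I obtain $\bigl(\tfrac d{d\zeta}\bigr)^k\overline{\delta_{0,m,\zeta}(z)}\big|_{\zeta=\xi}=\overline{\delta_{k,m,\xi}(z)}$. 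Folding the resulting integral back up — legitimate since $\Delta_{\Gamma,k,m,\xi,\chi}=P_{\Gamma,\chi}\delta_{k,m,\xi}\in S_m(\Gamma,\chi)$ by Proposition \ref{prop:026} and the relevant series converges absolutely with even faster decay — identifies it with $\scal f{\Delta_{\Gamma,k,m,\xi,\chi}}_\Gamma$, which completes the proof. The main obstacle is the uniform-in-$\xi$ integrability estimate above; everything else is bookkeeping with the branch conventions and the unfolding.
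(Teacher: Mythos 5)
Your proposal is correct and follows essentially the same route as the paper: unfold the Petersson inner product to the integral $\int_\calH f(z)\overline{\delta_{k,m,\xi}(z)}\Im(z)^m\,dv(z)$ and apply Lemma \ref{lem:064}, verifying its integrability hypothesis via the bound $\sup_{z\in\calH}\abs{f(z)\Im(z)^{\frac m2}}<\infty$. The only cosmetic differences are that the paper proves the unfolding identity for all $k$ at once (so no separate ``fold back up'' step is needed) and checks the uniform integrability by the substitution $z\mapsto n_{\Re(\xi)}a_{\Im(\xi)}.z$ rather than by your explicit computation of the $x$- and $y$-integrals.
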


\begin{proof}
	For every $ k\in\bbZ_{\geq0} $ and $ \xi\in\calH $, 
	\begin{align}
	\scal f{\Delta_{\Gamma,k,m,\xi,\chi}}_\Gamma
	&\underset{\eqref{eq:054}}{\overset{\eqref{eq:055}}=}\varepsilon_\Gamma^{-1}\int_{\Gamma\backslash\calH}f(z)\sum_{\gamma\in\Gamma}\chi(\gamma)\overline{\left(\delta_{k,m,\xi}\big|_m\gamma\right)(z)}\Im(z)^m\,dv(z)\nonumber\\
	&\overset{\eqref{eq:051}}=\varepsilon_\Gamma^{-1}\int_{\Gamma\backslash\calH}\sum_{\gamma\in\Gamma}\left(f\big|_m\gamma\right)(z)\overline{\left(\delta_{k,m,\xi}\big|_m\gamma\right)(z)}\Im(z)^m\,dv(z)\nonumber\\
	&\overset{\eqref{eq:056}}=\varepsilon_\Gamma^{-1}\int_{\Gamma\backslash\calH}\sum_{\gamma\in\Gamma}f(\gamma.z)\overline{\delta_{k,m,\xi}(\gamma.z)}\Im(\gamma.z)^m\,dv(z)\nonumber\\
	&\ =\ \int_\calH f(z)\overline{\delta_{k,m,\xi}(z)}\Im(z)^m\,dv(z)\nonumber\\
	&\overset{\eqref{eq:057}}=\frac{(-2i)^m}{4\pi}\left(\prod_{r=0}^k(m-1+r)\right)\int_\calH\frac{f(z)}{\left(\overline z-\xi\right)^{m+k}}\Im(z)^m\,dv(z).\label{eq:060}
	\end{align}
	The claim of the lemma follows from \eqref{eq:060} by Lemma \ref{lem:064}. The condition \eqref{lem:064:2} of Lemma \ref{lem:064} is satisfied since
	\begin{align}
	\int_\calH\frac{\abs{f(z)}}{\abs{\overline z-\xi}^{m+k}}\Im(z)^m\,dv(z)&\leq\left(\sup_{z\in\calH}\abs{f(z)\Im(z)^{\frac m2}}\right)\int_\calH\frac{\Im(z)^{\frac m2}}{\abs{z-\overline\xi}^{m+k}}\,dv(z)\label{eq:080}\\
	&=\left(\sup_{z\in\calH}\abs{f(z)\Im(z)^{\frac m2}}\right)\int_\calH\frac{\Im(z)^{\frac m2}}{\abs{z+i}^{m+k}}\,dv(z) \frac1{\Im(\xi)^{\frac m2+k}}\nonumber 
	\end{align}
	(applying the substitution $ z\mapsto n_{\Re(\xi)}a_{\Im(\xi)}.z $ for the last equality), and the right-hand side is obviously bounded for $ \xi $ in any circle $ C\subseteq\calH $.
\end{proof}

\begin{Thm}\label{thm:048}
	We have
	\begin{equation}\label{eq:029}
	\scal f{\Delta_{\Gamma,k,m,\xi,\chi}}_\Gamma=f^{(k)}(\xi),\qquad f\in S_m(\Gamma,\chi),\ k\in\bbZ_{\geq0},\ \xi\in\calH.
	\end{equation}
	For every $ \xi\in\calH $, $ \left\{\Delta_{\Gamma,k,m,\xi,\chi}:k\in\bbZ_{\geq0}\right\} $ spans $ S_m(\Gamma,\chi) $.
\end{Thm}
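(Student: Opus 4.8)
The plan is to establish \eqref{eq:029} first in the base case $ k=0,\ \xi=i $, then to promote it to arbitrary $ \xi $ by conjugation and to arbitrary $ k $ by differentiation; the spanning statement then drops out. Throughout, the Petersson products are legitimate because $ \Delta_{\Gamma,k,m,\xi,\chi}\in S_m(\Gamma,\chi) $ by Proposition~\ref{prop:026}.

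First I would note, directly from \eqref{eq:057} and the formula for $ f_{0,m} $ in Lemma~\ref{lem:045}, that $ \delta_{0,m,i}=\frac{m-1}{4\pi}f_{0,m} $, so that $ \Delta_{\Gamma,0,m,i,\chi}=\frac{m-1}{4\pi}P_{\Gamma,\chi}f_{0,m} $. Inserting this into the $ k=0 $ case of Theorem~\ref{thm:020}.(\ref{thm:020:3}) yields $ \scal f{\Delta_{\Gamma,0,m,i,\chi}}_\Gamma=f(i) $ for every $ f\in S_m(\Gamma,\chi) $; this is the sole input the analytic argument borrows from the representation-theoretic part of the paper.

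Next I would upgrade this to an arbitrary $ \xi=x+iy\in\calH $, keeping $ k=0 $. Setting $ \sigma:=n_xa_y $, so that $ \sigma.i=\xi $ and $ \eta_\sigma(i)^{-2m}=y^{m/2} $, I would combine the transformation identity \eqref{eq:049} (for $ k=0 $) with the unitarity of $ f\mapsto f\big|_m\sigma $ from Lemma~\ref{lem:053} and the cocycle relation $ \big(\psi\big|_m\sigma^{-1}\big)\big|_m\sigma=\psi $. This collapses $ \scal f{\Delta_{\Gamma,0,m,\xi,\chi}}_\Gamma $ to $ y^{-m/2}\scal{f\big|_m\sigma}{\Delta_{\sigma^{-1}\Gamma\sigma,0,m,i,\chi^\sigma}}_{\sigma^{-1}\Gamma\sigma} $, and the base case applied to the conjugated data $ (\sigma^{-1}\Gamma\sigma,\chi^\sigma) $ evaluates this as $ y^{-m/2}\big(f\big|_m\sigma\big)(i)=y^{-m/2}f(\xi)y^{m/2}=f(\xi) $. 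Thus $ \scal f{\Delta_{\Gamma,0,m,\xi,\chi}}_\Gamma=f(\xi) $ on all of $ \calH $.

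Finally I would pass to every $ k\geq1 $ via Lemma~\ref{lem:084}, which tells me that $ I_f(\xi):=\scal f{\Delta_{\Gamma,0,m,\xi,\chi}}_\Gamma $ is holomorphic and satisfies $ I_f^{(k)}(\xi)=\scal f{\Delta_{\Gamma,k,m,\xi,\chi}}_\Gamma $. The previous paragraph identifies $ I_f $ with $ f $, so differentiating $ k $ times gives $ \scal f{\Delta_{\Gamma,k,m,\xi,\chi}}_\Gamma=f^{(k)}(\xi) $, which is \eqref{eq:029}. For the spanning claim, finite-dimensionality of $ S_m(\Gamma,\chi) $ reduces matters to showing that an $ f $ orthogonal to every $ \Delta_{\Gamma,k,m,\xi,\chi} $ vanishes; by \eqref{eq:029} such an $ f $ has $ f^{(k)}(\xi)=0 $ for all $ k $, hence $ f\equiv0 $ by the identity theorem on the connected domain $ \calH $. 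The one genuinely analytic point — differentiating the Petersson integral in $ \xi $, i.e.\ moving $ \partial_\xi^{\,k} $ through $ \int_{\calH} $ — is already packaged in Lemma~\ref{lem:084} (and, behind it, Lemma~\ref{lem:064}), so it is the crux; everything else is bookkeeping with the automorphy factor $ \eta_\sigma $. (One could instead establish the $ \xi=i $ case for all $ k $ at once by binomial inversion of \eqref{eq:024} and then conjugate, bypassing Lemma~\ref{lem:084}, but the differentiation route is shorter given the lemmas at hand.)
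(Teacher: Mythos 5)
Your proposal is correct, and it rests on the same three ingredients as the paper's proof --- Theorem \hyperref[thm:020:3]{\ref*{thm:020}.(\ref*{thm:020:3})}, the conjugation identity \eqref{eq:049} together with Lemma \ref{lem:053}, and the differentiation-under-the-integral statement of Lemma \ref{lem:084} --- but it assembles them in a different order, and this buys a genuine simplification. The paper first proves $\scal f{\Delta_{\Gamma,k,m,i,\chi}}_\Gamma=f^{(k)}(i)$ for \emph{all} $k$ at the single point $\xi=i$, which requires inverting the triangular system coming from \eqref{eq:024} (an induction on $k$), and only then moves $\xi$, either by differentiating the automorphy relation $f(x+yz)=y^{-m/2}\left(f\big|_mn_xa_y\right)(z)$ $k$ times or by invoking the identity theorem to conclude $I_f=f$ from equality of Taylor coefficients at $i$. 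You instead settle only the single identity $\scal f{\Delta_{\Gamma,0,m,i,\chi}}_\Gamma=f(i)$ (immediate from the $k=0$ case of Theorem \hyperref[thm:020:3]{\ref*{thm:020}.(\ref*{thm:020:3})}, since $\delta_{0,m,i}=\frac{m-1}{4\pi}f_{0,m}$), transport it to every $\xi$ by conjugation --- where for $k=0$ no differentiation of the automorphy factor is needed --- and then obtain all derivatives at once from Lemma \ref{lem:084} applied to the identity $I_f=f$, which you have established pointwise on all of $\calH$ rather than inferring it from Taylor data at $i$. So you bypass both the binomial-inversion induction and the analytic-continuation step; the only nontrivial analysis, as you correctly isolate, is the justification of $I_f^{(k)}(\xi)=\scal f{\Delta_{\Gamma,k,m,\xi,\chi}}_\Gamma$, which Lemma \ref{lem:084} already provides. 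Your verifications (the constant $\frac{m-1}{4\pi}$, the value $\eta_{n_xa_y}(i)^{-2m}=y^{m/2}$, and the unitarity bookkeeping in the conjugation step) are all accurate, and the spanning argument is identical to the paper's.
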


\begin{proof}
	Using \eqref{eq:024}, Theorem \hyperref[thm:020:3]{\ref*{thm:020}.(\ref*{thm:020:3})} can be written in the following way: for all $ f\in S_m(\Gamma,\chi) $ and $ k\in\bbZ_{\geq0} $,
	\[ \sum_{l=0}^k\binom kl(2i)^l\frac{4\pi}{\prod_{r=0}^l(m-1+r)}\scal f{\Delta_{\Gamma,l,m,i,\chi}}_\Gamma=\sum_{l=0}^k\binom kl(2i)^l\frac{4\pi}{\prod_{r=0}^l(m-1+r)}f^{(l)}(i). \]
	This implies, by induction on $ k\in\bbZ_{\geq0} $, that, for all $ f\in S_m(\Gamma,\chi) $ and $ k\in\bbZ_{\geq0} $, 
	\begin{equation}\label{eq:028}
	\scal f{\Delta_{\Gamma,k,m,i,\chi}}_\Gamma=f^{(k)}(i). 
	\end{equation}
	
	From here, one can obtain \eqref{eq:029} for general $ \xi=x+iy\in\calH $ (with $ x,y\in\bbR $) in two ways. The first is algebraic (cf.\ the proof of \cite[Lemma 3-8]{MuicInner}): Let $ f\in S_m(\Gamma,\chi) $. By taking the $ k $th derivative at $ z=i $ of the both sides of the equality $ f(x+yz)=y^{-\frac m2}\left(f\big|_mn_xa_y\right)(z) $, we obtain, using Lemma \ref{lem:053},
	\begin{align*}
		f^{(k)}(\xi)&\ =\ y^{-\frac m2-k}\left(f\big|_mn_xa_y\right)^{(k)}(i)\\
		&\overset{\eqref{eq:028}}=y^{-\frac m2-k}\scal{f\big|_mn_xa_y}{\Delta_{\left(n_xa_y\right)^{-1}\Gamma n_xa_y,k,m,i,\chi^{n_xa_y}}}_{\left(n_xa_y\right)^{-1}\Gamma n_xa_y}\\
		&\ =\ \scal f{y^{-\frac m2-k}\Delta_{\left(n_xa_y\right)^{-1}\Gamma n_xa_y,k,m,i,\chi^{n_xa_y}}\big|_m\left(n_xa_y\right)^{-1}}_\Gamma\\
		&\overset{\eqref{eq:049}}=\scal f{\Delta_{\Gamma,k,m,\xi,\chi}}_\Gamma.
	\end{align*} 
	
	A second way to obtain \eqref{eq:029} from \eqref{eq:028} is analytic: Let $ f\in S_m(\Gamma,\chi) $. By Lemma \ref{lem:084}, \eqref{eq:028} shows that 
	\[ I_f^{(k)}(i)=f^{(k)}(i),\qquad k\in\bbZ_{\geq0}, \]
	i.e., $ f $ and $ I_{f} $ have the same Taylor expansion at $ i $. Since both are holomorphic on $ \calH $, it follows by the uniqueness of analytic continuation that
	\[ I_f^{(k)}(\xi)=f^{(k)}(\xi),\qquad \xi\in\calH,\ k\in\bbZ_{\geq0}, \]
	and this is \eqref{eq:029} by Lemma \ref{lem:084}.    	 
	
	The second claim of the theorem follows from \eqref{eq:029} as in the proof of Theorem \hyperref[thm:020:4]{\ref*{thm:020}.(\ref*{thm:020:4})}.
\end{proof}

\eqref{eq:029} and \eqref{eq:060} prove the following integral formula:

\begin{Cor}\label{cor:071}
	Let $ f\in S_m(\Gamma,\chi) $. Then, for all $ k\in\bbZ_{\geq0} $ and $ \xi\in\calH $,
	\[ f^{(k)}(\xi)=\frac{(-2i)^m}{4\pi}\left(\prod_{r=0}^k(m-1+r)\right)\int_\calH\frac{f(z)}{\left(\overline z-\xi\right)^{m+k}}\Im(z)^m\,dv(z). \]
\end{Cor}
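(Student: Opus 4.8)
The plan is to obtain this integral formula by simply chaining together the two expressions for the Petersson pairing $\scal{f}{\Delta_{\Gamma,k,m,\xi,\chi}}_\Gamma$ that have already been established, namely \eqref{eq:029} from Theorem \ref{thm:048} and \eqref{eq:060} from the proof of Lemma \ref{lem:084}. There is essentially nothing new to compute: the entire analytic content — unfolding the Poincar\'e series against the fundamental domain, replacing $\Delta_{\Gamma,k,m,\xi,\chi}$ by its defining function $\delta_{k,m,\xi}$, and controlling the resulting integral via $\sup_{z\in\calH}\abs{f(z)\Im(z)^{m/2}}<\infty$ — was already carried out in deriving \eqref{eq:060}, and the identification of that pairing with $f^{(k)}(\xi)$ is precisely the assertion \eqref{eq:029}.

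Concretely, I would argue as follows. Fix $f\in S_m(\Gamma,\chi)$, $k\in\bbZ_{\geq0}$, and $\xi\in\calH$. By \eqref{eq:029} we have $\scal{f}{\Delta_{\Gamma,k,m,\xi,\chi}}_\Gamma=f^{(k)}(\xi)$. On the other hand, the chain of equalities culminating in \eqref{eq:060} shows that this same pairing equals
\[
\frac{(-2i)^m}{4\pi}\left(\prod_{r=0}^k(m-1+r)\right)\int_\calH\frac{f(z)}{\left(\overline z-\xi\right)^{m+k}}\Im(z)^m\,dv(z).
\]
Equating the two expressions for $\scal{f}{\Delta_{\Gamma,k,m,\xi,\chi}}_\Gamma$ yields the claimed formula. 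The only point that requires a word of care is that the integral on the right is genuinely convergent (so that the final identity is not merely formal); but this is exactly the estimate \eqref{eq:080} established in the proof of Lemma \ref{lem:084}, which bounds $\int_\calH\abs{f(z)}\abs{\overline z-\xi}^{-(m+k)}\Im(z)^m\,dv(z)$ in terms of $\sup_{z\in\calH}\abs{f(z)\Im(z)^{m/2}}$ — finite by Lemma \ref{lem:038} since $f\in S_m(\Gamma,\chi)\subseteq S_m(\ker\chi)$.

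Since both input results hold for all $k\in\bbZ_{\geq0}$ and all $\xi\in\calH$, the corollary follows in full generality without any further case analysis. I do not anticipate a genuine obstacle here: the corollary is a bookkeeping consequence of Theorem \ref{thm:048} and Lemma \ref{lem:084}, and the author's own phrasing (``\eqref{eq:029} and \eqref{eq:060} prove the following integral formula'') confirms that the proof is a one-line synthesis. If anything, the mild subtlety worth flagging explicitly is that \eqref{eq:060} was derived inside the proof of Lemma \ref{lem:084} rather than stated as its conclusion, so I would phrase the argument so as to invoke that displayed identity directly.
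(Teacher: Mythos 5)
Your proposal is correct and is exactly the paper's own argument: the corollary is stated immediately after the sentence ``\eqref{eq:029} and \eqref{eq:060} prove the following integral formula,'' so chaining Theorem \ref{thm:048} with the unfolding computation \eqref{eq:060} (with convergence guaranteed by \eqref{eq:080}) is precisely the intended proof.
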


More generally, Corollary \ref{cor:071} holds for every $ f\in\Hol(\calH) $ such that $ \sup_{z\in\calH}\abs{f(z)\Im(z)^{\frac m2}}<\infty $. This follows from the half-integral weight version of \cite[Theorem 6.2.2]{miyake}. As a simple application of Corollary \ref{cor:071}, we prove:

\begin{Cor}\label{cor:081}
	Let $ f\in S_m(\Gamma,\chi) $. Then, for every $ k\in\bbZ_{\geq0} $,
	\[ \sup_{\xi\in\calH}\abs{f^{(k)}(\xi)\Im(\xi)^{\frac m2+k}}<\infty. \]
\end{Cor}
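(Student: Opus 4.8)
The plan is to read the bound off directly from the integral representation of Corollary \ref{cor:071}. Fix $ k\in\bbZ_{\geq0} $ and $ \xi\in\calH $. Taking absolute values in Corollary \ref{cor:071} and using $ \abs{\overline z-\xi}=\abs{z-\overline\xi} $, I would first estimate
\[ \abs{f^{(k)}(\xi)}\leq\frac{2^m}{4\pi}\left(\prod_{r=0}^k(m-1+r)\right)\int_\calH\frac{\abs{f(z)}}{\abs{z-\overline\xi}^{m+k}}\Im(z)^m\,dv(z). \]
Since $ f\in S_m(\Gamma,\chi)\subseteq S_m(\ker\chi) $, Lemma \ref{lem:038} gives $ M:=\sup_{z\in\calH}\abs{f(z)\Im(z)^{\frac m2}}<\infty $, so $ \abs{f(z)}\leq M\,\Im(z)^{-\frac m2} $ and the integral is dominated by $ M\int_\calH\Im(z)^{\frac m2}\abs{z-\overline\xi}^{-(m+k)}\,dv(z) $.

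The decisive step is the change of variables already performed in the proof of Lemma \ref{lem:084} (see \eqref{eq:080}): applying the substitution $ z\mapsto n_{\Re(\xi)}a_{\Im(\xi)}.z $ and using the $ \SL2(\bbR) $-invariance of $ dv $ together with $ \Im(n_xa_y.z)=\Im(\xi)\Im(z) $ and $ n_xa_y.z-\overline\xi=\Im(\xi)(z+i) $, one obtains
\[ \int_\calH\frac{\Im(z)^{\frac m2}}{\abs{z-\overline\xi}^{m+k}}\,dv(z)=\frac1{\Im(\xi)^{\frac m2+k}}\int_\calH\frac{\Im(z)^{\frac m2}}{\abs{z+i}^{m+k}}\,dv(z). \]
Multiplying through by $ \Im(\xi)^{\frac m2+k} $ cancels all $ \xi $-dependence, giving
\[ \abs{f^{(k)}(\xi)\Im(\xi)^{\frac m2+k}}\leq\frac{2^mM}{4\pi}\left(\prod_{r=0}^k(m-1+r)\right)\int_\calH\frac{\Im(z)^{\frac m2}}{\abs{z+i}^{m+k}}\,dv(z), \]
and taking the supremum over $ \xi\in\calH $ then yields the claim, provided the right-hand integral is finite.

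The one genuinely analytic point—and the only place I expect to have to say anything—is the finiteness of the constant $ \int_\calH\Im(z)^{\frac m2}\abs{z+i}^{-(m+k)}\,dv(z) $. Writing $ z=u+iv $ and $ dv(z)=v^{-2}\,du\,dv $, the inner integral over $ u $ is comparable to $ (v+1)^{-(m+k-1)} $, reducing convergence to that of $ \int_0^\infty v^{\frac m2-2}(v+1)^{-(m+k-1)}\,dv $. This converges near $ v=0 $ precisely because $ m>2 $ (so $ \frac m2-2>-1 $) and near $ v=\infty $ because $ \frac m2+k>0 $; both hold since $ m\in\frac52+\bbZ_{\geq0} $. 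I do not anticipate a real obstacle: the whole argument is a single dominated-by-a-constant estimate, with the two nontrivial ingredients—the integral formula and the invariance computation—already supplied by Corollary \ref{cor:071} and the proof of Lemma \ref{lem:084}.
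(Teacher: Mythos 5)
Your proposal is correct and follows essentially the same route as the paper: take absolute values in Corollary \ref{cor:071}, dominate $\abs{f(z)}$ by $\sup_z\abs{f(z)\Im(z)^{m/2}}\cdot\Im(z)^{-m/2}$ (Lemma \ref{lem:038}), and extract the factor $\Im(\xi)^{-\frac m2-k}$ via the substitution already recorded in \eqref{eq:080}. The only difference is that you explicitly verify the finiteness of the constant $\int_\calH\Im(z)^{\frac m2}\abs{z+i}^{-(m+k)}\,dv(z)$ (correctly, using $m>2$ and $\frac m2+k>0$), a point the paper leaves implicit.
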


\begin{proof}
	By Corollary \ref{cor:071} and \eqref{eq:080},
	\[ \sup_{\xi\in\calH}\abs{f^{(k)}(\xi)\Im(\xi)^{\frac m2+k}}\leq\frac{2^m}{4\pi}\left(\prod_{r=0}^k(m-1+r)\right)\left(\sup_{z\in\calH}\abs{f(z)\Im(z)^{\frac m2}}\right)\int_\calH\frac{\Im(z)^{\frac m2}}{\abs{z+i}^{m+k}}\,dv(z), \]
	and the right-hand side is finite by Lemma \ref{lem:038}. 
\end{proof}

Now we can easily prove the following result (cf.\ \cite[(1-5)]{MuicLFunk}):

\begin{Prop}\label{prop:082}
	Let $ k\in\bbZ_{\geq0} $. Then,
	\[ \sup_{z,\xi\in\calH}\Im(\xi)^{\frac m2+k}\Im(z)^{\frac m2}\abs{\Delta_{\Gamma,k,m,\xi,\chi}(z)}<\infty.  \]
\end{Prop}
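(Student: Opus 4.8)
The plan is to recognize $\Delta_{\Gamma,k,m,\xi,\chi}$ as the reproducing kernel of the finite-dimensional Hilbert space $S_m(\Gamma,\chi)$ for the functional $f\mapsto f^{(k)}(\xi)$, and to expand it in an orthonormal basis so as to decouple the dependence on $z$ from the dependence on $\xi$. Concretely, I would fix an orthonormal basis $f_1,\dots,f_d$ of $S_m(\Gamma,\chi)$ and first establish the identity
\[ \Delta_{\Gamma,k,m,\xi,\chi}=\sum_{j=1}^d\overline{f_j^{(k)}(\xi)}\,f_j,\qquad \xi\in\calH. \]
Indeed, by Theorem \ref{thm:048} the cusp form $\Delta_{\Gamma,k,m,\xi,\chi}$ is the unique element of $S_m(\Gamma,\chi)$ satisfying $\scal f{\Delta_{\Gamma,k,m,\xi,\chi}}_\Gamma=f^{(k)}(\xi)$ for all $f\in S_m(\Gamma,\chi)$. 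Writing its orthonormal expansion $\Delta_{\Gamma,k,m,\xi,\chi}=\sum_j\scal{\Delta_{\Gamma,k,m,\xi,\chi}}{f_j}_\Gamma f_j$ and evaluating the reproducing property \eqref{eq:029} at $f=f_l$ yields $\scal{\Delta_{\Gamma,k,m,\xi,\chi}}{f_l}_\Gamma=\overline{f_l^{(k)}(\xi)}$, which is exactly the claimed identity.

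With this expansion in hand, the bound follows by a direct estimate:
\[ \Im(\xi)^{\frac m2+k}\Im(z)^{\frac m2}\abs{\Delta_{\Gamma,k,m,\xi,\chi}(z)}\le\sum_{j=1}^d\Bigl(\Im(\xi)^{\frac m2+k}\abs{f_j^{(k)}(\xi)}\Bigr)\Bigl(\Im(z)^{\frac m2}\abs{f_j(z)}\Bigr). \]
The crucial point is that the two factors in each summand now involve $\xi$ and $z$ separately, and each is uniformly bounded by a result already proved: the first factor is bounded uniformly in $\xi$ by Corollary \ref{cor:081} applied to $f_j\in S_m(\Gamma,\chi)$, and the second factor is bounded uniformly in $z$ by Lemma \ref{lem:038}.(2) applied to $f_j$ (valid since $f_j\in S_m(\ker\chi)$). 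Denoting these suprema by $M_j$ and $M_j'$ respectively, the finite sum is bounded by $\sum_{j=1}^d M_jM_j'$, which is the desired uniform bound over all $z,\xi\in\calH$.

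The proof is short once the reproducing-kernel expansion is available, and I do not expect a genuine obstacle. The only real content is the identification $\Delta_{\Gamma,k,m,\xi,\chi}=\sum_j\overline{f_j^{(k)}(\xi)}f_j$, which rests entirely on Theorem \ref{thm:048}, together with the separation-of-variables idea that reduces a bound jointly uniform in $(z,\xi)$ to the two one-variable bounds already established. The finite-dimensionality of $S_m(\Gamma,\chi)$ is exactly what legitimizes the finite sum and allows the two supremum bounds to be combined; no analytic estimate beyond Corollary \ref{cor:081} and Lemma \ref{lem:038} is required.
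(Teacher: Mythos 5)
Your proposal is correct and takes essentially the same route as the paper: the paper's proof likewise fixes an orthonormal basis, uses \eqref{eq:029} to write $\Delta_{\Gamma,k,m,\xi,\chi}=\sum_{l=1}^d\overline{f_l^{(k)}(\xi)}f_l$, and bounds the resulting finite sum term by term using Corollary \ref{cor:081} together with the boundedness of $f_l(z)\Im(z)^{\frac m2}$.
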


\begin{proof}
	Let us fix an orthonormal basis $ \left\{f_1,\ldots,f_d\right\} $ of $ S_m(\Gamma,\chi) $. We have
	\[ \Delta_{\Gamma,k,m,\xi,\chi}(z)=\sum_{l=1}^d\scal{\Delta_{\Gamma,k,m,\xi,\chi}}{f_l}_\Gamma f_l(z)\overset{\eqref{eq:029}}=\sum_{l=1}^d\overline{f_l^{(k)}(\xi)}f_l(z), \]
	hence
	\[ \sup_{z,\xi\in\calH}\Im(\xi)^{\frac m2+k}\Im(z)^{\frac m2}\abs{\Delta_{\Gamma,k,m,\xi,\chi}(z)}\leq
	\sum_{l=1}^d\left(\sup_{\xi\in\calH}\abs{f_l^{(k)}(\xi)\Im(\xi)^{\frac m2+k}}\right)\left(\sup_{z\in\calH}\abs{f_l(z)\Im(z)^{\frac m2}}\right), \]
	and the right-hand side is finite by Corollary \ref{cor:081}.
\end{proof}

\section{Two expansions of cusp forms \eqref{eq:074}}\label{sec:048}

Throughout this section, let $ \Gamma $ be a discrete subgroup of finite covolume in $ \Met $, $ \chi:\Gamma\to\bbC^\times $ a character of finite order, and $ m\in\frac52+\bbZ_{\geq0} $. Moreover, suppose that $ \infty $ is a cusp of $ P(\Gamma) $ and that 
\[ \eta_\gamma(z)^{-2m}=\chi(\gamma),\qquad\gamma\in\Gamma_\infty,\ z\in\calH. \]
Let $ h\in\bbR_{>0} $ such that 
\[ Z\left(\Met\right)\Gamma_\infty=Z\left(\Met\right)\left<n_h\right>. \]

By the half-integral weight version of \cite[Theorem 2.6.9]{miyake}, for every $ n\in\bbZ_{>0} $ the classical Poincar\' e series
\[ \psi_{\Gamma,n,m,\chi}:=P_{\Gamma_\infty\backslash\Gamma,\chi}e^{2\pi in\frac\spacedcdot h} \]
converges absolutely and uniformly on compact sets in $ \calH $, and $ \psi_{\Gamma,n,m,\chi}\in S_m(\Gamma,\chi) $. Moreover, by the half-integral weight version of \cite[Theorem 2.6.10]{miyake}, every $ f\in S_m(\Gamma,\chi) $ has the following Fourier expansion:
\begin{equation}\label{eq:087}
f(z)=\frac{\varepsilon_\Gamma(4\pi)^{m-1}}{\Gamma(m-1)h^m}\sum_{n=1}^\infty n^{m-1}\scal f{\psi_{\Gamma,n,m,\chi}}_\Gamma e^{2\pi in\frac zh},\qquad z\in\calH.
\end{equation}
Here we use the standard notation for the gamma function: $ \Gamma(x):=\int_0^\infty t^{x-1}e^{-t}\,dt $, $ x\in\bbR_{>0} $.

Theorem \ref{thm:035} provides the Fourier expansion of cusp forms $ \Delta_{\Gamma,k,m,\xi,\chi} $ and their expansion in a series of classical Poincar\' e series. It is a half-integral weight variant of \cite[Theorem 3-5]{MuicCurves}. Lemma \ref{lem:034} resolves the convergence issues of its proof.

We define a norm $ \norm{\spacedcdot}_{\Gamma,1} $ on $ S_m(\Gamma,\chi) $ by
\[ \norm{f}_{\Gamma,1}:=\int_{\Gamma\backslash\calH}\abs{f(z)\Im(z)^{\frac m2}}\,dv(z),\qquad f\in S_m(\Gamma,\chi). \] 

\begin{Lem}\label{lem:034}
	Let $ k\in\bbZ_{\geq0} $ and $ \xi\in\calH $. Then, the series
	\begin{equation}\label{eq:065}
	\sum_{n=1}^\infty n^{m+k-1}e^{-2\pi in\frac{\overline\xi}{h}}\psi_{\Gamma,n,m,\chi} 
	\end{equation}
	converges:
	\begin{enumerate}
		\item absolutely in the norm $ \norm{\spacedcdot}_{\Gamma,1} $\label{lem:033:1}
		\item absolutely and uniformly on compact sets in $ \calH $\label{lem:033:2}
		\item in the topology of $ S_m(\Gamma,\chi) $.\label{lem:033:3}
	\end{enumerate}
\end{Lem}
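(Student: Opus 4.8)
The plan is to deduce all three statements from a single absolute-convergence estimate in the norm $\norm{\spacedcdot}_{\Gamma,1}$. First I would note that every partial sum of \eqref{eq:065} lies in the finite-dimensional space $S_m(\Gamma,\chi)$, on which $\norm{\spacedcdot}_{\Gamma,1}$, the Petersson norm, and the topology of uniform convergence on compact subsets of $\calH$ all coincide with the unique Hausdorff vector-space topology. Hence it suffices to prove \eqref{lem:033:1}; claims \eqref{lem:033:2} and \eqref{lem:033:3} then follow, since each seminorm $f\mapsto\sup_{z\in L}\abs{f(z)}$ (for $L\subseteq\calH$ compact) as well as the Petersson norm is dominated by a constant multiple of $\norm{\spacedcdot}_{\Gamma,1}$ on this finite-dimensional space, so that absolute $\norm{\spacedcdot}_{\Gamma,1}$-convergence turns into absolute convergence for those seminorms too.

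To prove \eqref{lem:033:1}, I would first extract the exponential decay hidden in the scalar coefficient: since $\Im(\xi)>0$, one has $\abs{e^{-2\pi in\overline\xi/h}}=e^{-2\pi n\Im(\xi)/h}$. The problem then reduces to showing that $\norm{\psi_{\Gamma,n,m,\chi}}_{\Gamma,1}$ grows at most polynomially in $n$, for then
\[ \sum_{n=1}^\infty n^{m+k-1}\abs{e^{-2\pi in\overline\xi/h}}\norm{\psi_{\Gamma,n,m,\chi}}_{\Gamma,1} \]
is dominated by a series of the form $\sum_{n=1}^\infty n^{\frac m2+k}e^{-2\pi n\Im(\xi)/h}$, which converges because the exponential factor overwhelms the polynomial one.

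The key step, and the one needing the most care, is the bound on $\norm{\psi_{\Gamma,n,m,\chi}}_{\Gamma,1}$, which I would obtain by unfolding. Applying the triangle inequality to $\psi_{\Gamma,n,m,\chi}=\sum_{\gamma\in\Gamma_\infty\backslash\Gamma}\overline{\chi(\gamma)}e^{2\pi in\gamma.z/h}\eta_\gamma(z)^{-2m}$ and using $\abs{\chi(\gamma)}=1$, $\abs{e^{2\pi in\gamma.z/h}}=e^{-2\pi n\Im(\gamma.z)/h}$, and the identity $\abs{\eta_\gamma(z)}^{-2m}=\left(\Im(\gamma.z)/\Im(z)\right)^{\frac m2}$ coming from \eqref{eq:056}, I get
\[ \abs{\psi_{\Gamma,n,m,\chi}(z)}\Im(z)^{\frac m2}\le\sum_{\gamma\in\Gamma_\infty\backslash\Gamma}e^{-2\pi n\Im(\gamma.z)/h}\Im(\gamma.z)^{\frac m2}. \]
The summand depends only on $\gamma.z$ and is a non-negative $\Gamma_\infty$-invariant function of it, so Tonelli's theorem justifies the standard unfolding of $\int_{\Gamma\backslash\calH}$ into $\int_{\Gamma_\infty\backslash\calH}$, giving
\[ \norm{\psi_{\Gamma,n,m,\chi}}_{\Gamma,1}\le\int_{\Gamma_\infty\backslash\calH}e^{-2\pi n\Im(z)/h}\Im(z)^{\frac m2}\,dv(z)=h\int_0^\infty e^{-2\pi ny/h}y^{\frac m2-2}\,dy, \]
where I use the width-$h$ strip $\{z\in\calH:0\le\Re(z)<h\}$ as a fundamental domain for the action of $\Gamma_\infty$ on $\calH$. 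Since $m\ge\frac52$ forces $\frac m2-1>0$, the substitution $u=2\pi ny/h$ evaluates the last integral to $h\left(\frac h{2\pi}\right)^{\frac m2-1}\Gamma\left(\frac m2-1\right)n^{1-\frac m2}$. This is the required polynomial (indeed decaying) bound, and combined with the exponential decay extracted above it completes the proof.
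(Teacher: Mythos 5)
Your proposal is correct and follows essentially the same route as the paper: reduce everything to absolute convergence in $\norm{\spacedcdot}_{\Gamma,1}$ (the paper likewise invokes finite-dimensionality of $S_m(\Gamma,\chi)$, together with the pointwise bound of \cite[Corollary 2.6.2]{miyake} in place of your seminorm-domination remark), then bound $\norm{\psi_{\Gamma,n,m,\chi}}_{\Gamma,1}$ by the identical unfolding computation, arriving at the same value $h\left(\frac h{2\pi n}\right)^{\frac m2-1}\Gamma\left(\frac m2-1\right)$ and the same dominating series $\sum_n n^{\frac m2+k}e^{-2\pi n\Im(\xi)/h}$.
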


\begin{proof}
	\eqref{lem:033:1} implies the absolute convergence of \eqref{eq:065} at every $ z\in\calH $ by \cite[Corollary 2.6.2]{miyake}. \eqref{lem:033:1} also implies the rest of the claims \eqref{lem:033:2} and \eqref{lem:033:3} since $ S_m(\Gamma,\chi) $ is finite-dimensional. 
	
	To prove \eqref{lem:033:1}, observe that
	\begin{align*}
		\norm{\psi_{\Gamma,n,m,\chi}}_{\Gamma,1}&\leq\int_{\Gamma\backslash\calH}\sum_{\gamma\in\Gamma_\infty\backslash\Gamma}\abs{\left(e^{2\pi in\frac{\spacedcdot}h}\big|_m\gamma\right)(z)\Im( z)^{\frac m2}}\,dv(z)\\
		&\overset{\eqref{eq:056}}=\int_{\Gamma\backslash\calH}\sum_{\gamma\in\Gamma_\infty\backslash\Gamma}\abs{e^{2\pi in\frac{\gamma.z}h}\Im\left(\gamma.z\right)^{\frac m2}}\,dv(z)
		=\int_{\Gamma_\infty\backslash\calH}\abs{e^{2\pi in\frac zh}\Im(z)^{\frac m2}}\,dv(z)\\
		&=\int_0^h\int_0^\infty e^{-2\pi n\frac yh}y^{\frac m2-2}\,dy\,dx
		=h\left(\frac h{2\pi n}\right)^{\frac m2-1}\Gamma\left(\frac m2-1\right),
	\end{align*}
	so
	\[ \sum_{n=1}^\infty \norm{n^{m+k-1}e^{-2\pi in\frac{\overline\xi}{h}}\psi_{\Gamma,n,m,\chi}}_{\Gamma,1}\leq \frac{h^{\frac m2}}{(2\pi)^{\frac m2-1}}\Gamma\left(\frac m2-1\right)\sum_{n=1}^\infty n^{\frac m2+k}e^{-2\pi n\frac{\Im(\xi)}h}, \]
	and the right-hand side is finite by d'Alembert's ratio test.
\end{proof}

\begin{Thm}\label{thm:035}
	Let $ k\in\bbZ_{\geq0} $ and $ \xi\in\calH $. Then:
	\begin{enumerate}
		\item $ \Delta_{\Gamma,k,m,\xi,\chi} $ has the following Fourier expansion:\label{thm:035:1}
		\begin{equation}\label{eq:042}
		\Delta_{\Gamma,k,m,\xi,\chi}(z)=\frac{\varepsilon_\Gamma\left(4\pi\right)^{m-1}}{\Gamma(m-1)h^{m}}\sum_{n=1}^\infty n^{m-1}\overline{\psi_{\Gamma,n,m,\chi}^{(k)}(\xi)}e^{2\pi in\frac{z}{h}},\qquad z\in\calH. 
		\end{equation}
		\item We have
		\begin{equation}\label{eq:040}
		\Delta_{\Gamma,k,m,\xi,\chi}(z)=\frac{\varepsilon_\Gamma\left(4\pi\right)^{m-1}(-2\pi i)^k}{\Gamma(m-1)h^{m+k}}\sum_{n=1}^\infty n^{m+k-1}e^{-2\pi in\frac{\overline\xi}{h}}\psi_{\Gamma,n,m,\chi}(z),\qquad z\in\calH.
		\end{equation}
		The right-hand side converges in $ S_m(\Gamma,\chi) $ and absolutely and uniformly on compact sets in $ \calH $.\label{thm:035:2}
	\end{enumerate}
\end{Thm}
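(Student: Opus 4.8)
The plan is to handle the two claims in order, obtaining \eqref{eq:042} directly from the general Fourier expansion \eqref{eq:087} and then identifying the series in \eqref{eq:040} with $\Delta_{\Gamma,k,m,\xi,\chi}$ by a duality argument in the finite-dimensional Hilbert space $S_m(\Gamma,\chi)$.

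For \eqref{thm:035:1}, I would observe that $\Delta_{\Gamma,k,m,\xi,\chi}\in S_m(\Gamma,\chi)$ by Proposition \ref{prop:026}, so the Fourier expansion \eqref{eq:087} applies to it and yields
\[ \Delta_{\Gamma,k,m,\xi,\chi}(z)=\frac{\varepsilon_\Gamma(4\pi)^{m-1}}{\Gamma(m-1)h^m}\sum_{n=1}^\infty n^{m-1}\scal{\Delta_{\Gamma,k,m,\xi,\chi}}{\psi_{\Gamma,n,m,\chi}}_\Gamma e^{2\pi in\frac zh}. \]
It then remains only to rewrite the coefficients: by the Hermitian symmetry of the Petersson product together with \eqref{eq:029} applied to $f=\psi_{\Gamma,n,m,\chi}$,
\[ \scal{\Delta_{\Gamma,k,m,\xi,\chi}}{\psi_{\Gamma,n,m,\chi}}_\Gamma=\overline{\scal{\psi_{\Gamma,n,m,\chi}}{\Delta_{\Gamma,k,m,\xi,\chi}}_\Gamma}=\overline{\psi_{\Gamma,n,m,\chi}^{(k)}(\xi)}, \]
which is exactly \eqref{eq:042}.

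For \eqref{thm:035:2}, the convergence of the series in \eqref{eq:040}, in $S_m(\Gamma,\chi)$ and absolutely and uniformly on compacta, is already furnished by Lemma \ref{lem:034}; I denote its sum by $g\in S_m(\Gamma,\chi)$, so that it suffices to prove $g=\Delta_{\Gamma,k,m,\xi,\chi}$. As $S_m(\Gamma,\chi)$ is a Hilbert space, I would check this by verifying $\scal fg_\Gamma=\scal f{\Delta_{\Gamma,k,m,\xi,\chi}}_\Gamma$ for every $f\in S_m(\Gamma,\chi)$. Writing the Fourier expansion $f(z)=\sum_{n\geq1}a_ne^{2\pi inz/h}$, the expansion \eqref{eq:087} lets me solve for $\scal f{\psi_{\Gamma,n,m,\chi}}_\Gamma$ in terms of $a_n$. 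Substituting this into $\scal fg_\Gamma$ — where interchanging $\scal f{\spacedcdot}_\Gamma$ with the infinite sum is legitimate since the series converges in $S_m(\Gamma,\chi)$ by Lemma \hyperref[lem:033:3]{\ref*{lem:034}.(\ref*{lem:033:3})} and the inner product is continuous — all the normalizing constants cancel and I am left with $\sum_{n\geq1}a_n\left(\frac{2\pi in}h\right)^ke^{2\pi in\xi/h}$. This is the term-by-term $k$th derivative of the Fourier series of $f$, hence equals $f^{(k)}(\xi)$ (term-by-term differentiation being justified by \cite[Corollary 2.6.2]{miyake}), and finally $f^{(k)}(\xi)=\scal f{\Delta_{\Gamma,k,m,\xi,\chi}}_\Gamma$ by \eqref{eq:029}. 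Therefore $\scal fg_\Gamma=\scal f{\Delta_{\Gamma,k,m,\xi,\chi}}_\Gamma$ for all $f$, giving $g=\Delta_{\Gamma,k,m,\xi,\chi}$.

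I expect the only genuine difficulty to be the convergence and the interchange of summation, both of which are isolated in Lemma \ref{lem:034}; granting its three modes of convergence, the rest is bookkeeping of constants and the Hermitian symmetry of $\scal{\spacedcdot}{\spacedcdot}_\Gamma$. As a consistency check and an alternative route to \eqref{thm:035:2}, one could instead deduce \eqref{eq:040} from \eqref{eq:042} by inserting into the right-hand side of \eqref{eq:040} the Fourier expansion of each $\psi_{\Gamma,n,m,\chi}$ coming from \eqref{eq:087}, interchanging the two summations — again justified by Lemma \ref{lem:034} — and recognizing the resulting coefficient of $e^{2\pi inz/h}$ as $\frac{\varepsilon_\Gamma(4\pi)^{m-1}}{\Gamma(m-1)h^m}n^{m-1}\overline{\psi_{\Gamma,n,m,\chi}^{(k)}(\xi)}$ via the Fourier expansion of $\psi_{\Gamma,n,m,\chi}^{(k)}(\xi)$; the constants match because $\frac{\varepsilon_\Gamma(4\pi)^{m-1}}{\Gamma(m-1)h^m}\left(\frac{-2\pi i}h\right)^k$ is precisely the prefactor in \eqref{eq:040}.
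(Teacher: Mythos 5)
Your proof is correct, and part (1) coincides with the paper's argument: apply \eqref{eq:087} to $\Delta_{\Gamma,k,m,\xi,\chi}$ and identify the coefficients via Hermitian symmetry of the Petersson product and \eqref{eq:029} applied to $f=\psi_{\Gamma,n,m,\chi}$. For part (2) you take a genuinely different route. The paper's proof is a short chain of equalities: it applies \eqref{eq:029} twice to get $\Delta_{\Gamma,k,m,\xi,\chi}(z)=\scal{\Delta_{\Gamma,k,m,\xi,\chi}}{\Delta_{\Gamma,0,m,z,\chi}}_\Gamma=\overline{\Delta_{\Gamma,0,m,z,\chi}^{(k)}(\xi)}$, then substitutes the already-proved expansion \eqref{eq:042} for $\Delta_{\Gamma,0,m,z,\chi}$ and differentiates it $k$ times in $\xi$ term by term, so that \eqref{thm:035:2} becomes a one-step corollary of \eqref{thm:035:1}. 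You instead name the sum $g$ of the series (convergent in $S_m(\Gamma,\chi)$ by Lemma \ref{lem:034}) and test it against an arbitrary $f=\sum_{n\geq1}a_ne^{2\pi inz/h}$, solving \eqref{eq:087} for $\scal f{\psi_{\Gamma,n,m,\chi}}_\Gamma=\frac{\Gamma(m-1)h^m}{\varepsilon_\Gamma(4\pi)^{m-1}}\,a_n\,n^{1-m}$; the constants cancel exactly as you claim, leaving $\scal fg_\Gamma=\sum_{n\geq1}a_n\left(\frac{2\pi in}h\right)^ke^{2\pi in\xi/h}=f^{(k)}(\xi)=\scal f{\Delta_{\Gamma,k,m,\xi,\chi}}_\Gamma$, whence $g=\Delta_{\Gamma,k,m,\xi,\chi}$ by nondegeneracy. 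Both arguments rest on the same three ingredients --- Lemma \ref{lem:034} for convergence, the reproducing property \eqref{eq:029}, and term-by-term differentiation of an absolutely and locally uniformly convergent Fourier series --- so the difference is one of packaging: the paper's version is shorter and exhibits the identity directly, while your duality argument avoids the slightly less transparent step of evaluating one reproducing kernel against another, at the cost of an extra appeal to continuity of the inner product. Your closing ``alternative route'' via interchanging the double sum is essentially the paper's computation read in reverse, and is equally valid given Lemma \ref{lem:034}.
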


\begin{proof}
	This can be proved analogously to the proof of \cite[Theorem 3-5]{MuicCurves}, all convergence issues being settled by Lemma \ref{lem:034}. We provide a shorter proof:
	
	\eqref{thm:035:1} \eqref{eq:042} follows from \eqref{eq:087} since $ \scal {\Delta_{\Gamma,k,m,\xi,\chi}}{\psi_{\Gamma,n,m,\chi}}_\Gamma\overset{\eqref{eq:029}}=\overline{\psi_{\Gamma,n,m,\chi}^{(k)}(\xi)} $.
	
	\eqref{thm:035:2} We have
	\begin{align*}
		\Delta_{\Gamma,k,m,\xi,\chi}(z)&\overset{\eqref{eq:029}}=\scal{\Delta_{\Gamma,k,m,\xi,\chi}}{\Delta_{\Gamma,0,m,z,\chi}}_\Gamma\overset{\eqref{eq:029}}=\overline{\Delta_{\Gamma,0,m,z,\chi}^{(k)}(\xi)}\\
		&\overset{\eqref{eq:042}}=\frac{\varepsilon_\Gamma\left(4\pi\right)^{m-1}}{\Gamma(m-1)h^{m}}\sum_{n=1}^\infty n^{m-1}\psi_{\Gamma,n,m,\chi}(z)\overline{\left(\frac{2\pi in}h\right)^ke^{2\pi in\frac{\xi}{h}}}\\
		&=\frac{\varepsilon_\Gamma\left(4\pi\right)^{m-1}(-2\pi i)^k}{\Gamma(m-1)h^{m+k}}\sum_{n=1}^\infty n^{m+k-1}e^{-2\pi in\frac{\overline\xi}{h}}\psi_{\Gamma,n,m,\chi}(z),\qquad z\in\calH.
	\end{align*}
	The convergence claim follows from Lemma \ref{lem:034}.
\end{proof}

Now we can easily prove some bounds on the derivatives of classical Poincar\' e series (cf.\ \cite[Theorem 1-2]{MuicLFunk}):

\begin{Cor}\label{cor:090}
	Let $ k\in\bbZ_{\geq0} $. Then,
	\[ \sup_{\substack{\xi\in\calH,\\n\in\bbZ_{>0}}}n^{\frac m2-1}\Im(\xi)^{\frac m2+k}\abs{\psi_{\Gamma,n,m,\chi}^{(k)}(\xi)}<\infty. \]
\end{Cor}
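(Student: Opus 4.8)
The plan is to recover $\psi_{\Gamma,n,m,\chi}^{(k)}(\xi)$ as a Fourier coefficient of the cusp form $\Delta_{\Gamma,k,m,\xi,\chi}$ through the expansion \eqref{eq:042}, and then to control that coefficient by combining the uniform estimate of Proposition \ref{prop:082} with the classical device of letting the height of the integration contour depend on $n$.

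First I would fix $k\in\bbZ_{\geq0}$, $\xi\in\calH$, $n\in\bbZ_{>0}$, and an arbitrary height $y\in\bbR_{>0}$. The Fourier expansion \eqref{eq:042} converges absolutely and uniformly on the horizontal segment $\left\{x+iy:x\in[0,h]\right\}$ (as does any Fourier expansion of a cusp form; cf.\ \cite[Corollary 2.6.2]{miyake}), so I may integrate it term by term against $e^{-2\pi inx/h}$ over $x\in[0,h]$ to isolate the $n$-th coefficient:
\[ \frac{\varepsilon_\Gamma(4\pi)^{m-1}}{\Gamma(m-1)h^m}\,n^{m-1}\,\overline{\psi_{\Gamma,n,m,\chi}^{(k)}(\xi)}\,e^{-2\pi ny/h}=\frac1h\int_0^h\Delta_{\Gamma,k,m,\xi,\chi}(x+iy)\,e^{-2\pi inx/h}\,dx. \]

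Second, I would take absolute values and insert the bound of Proposition \ref{prop:082}, which furnishes a constant $B$ with $\abs{\Delta_{\Gamma,k,m,\xi,\chi}(z)}\leq B\,\Im(\xi)^{-\frac m2-k}\Im(z)^{-\frac m2}$ for all $z,\xi\in\calH$; this bounds the right-hand side by $B\,\Im(\xi)^{-\frac m2-k}y^{-\frac m2}$. After rearranging and multiplying through by $n^{\frac m2-1}$, I obtain
\[ n^{\frac m2-1}\,\Im(\xi)^{\frac m2+k}\,\abs{\psi_{\Gamma,n,m,\chi}^{(k)}(\xi)}\leq\frac{\Gamma(m-1)h^m B}{\varepsilon_\Gamma(4\pi)^{m-1}}\cdot\frac{e^{2\pi ny/h}}{n^{\frac m2}y^{\frac m2}}. \]

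Since $y>0$ is still free, the final step is to optimize it out by taking $y:=\frac h{2\pi n}$, which turns $e^{2\pi ny/h}$ into $e$ and $n^{\frac m2}y^{\frac m2}$ into the constant $\left(h/2\pi\right)^{\frac m2}$; the right-hand side then no longer depends on $n$ or $\xi$, yielding exactly the asserted supremum bound. I do not foresee any genuine obstacle: the computation is the Hecke-type estimate for Fourier coefficients of cusp forms, and the only point requiring care is the justification of the term-by-term integration, which is immediate from the locally uniform convergence of the Fourier series. The single \emph{idea} of the argument is recognizing that the freedom to slide the contour height — made into the substitution $y=h/(2\pi n)$ — is what converts the harmless-looking factor $e^{2\pi ny/h}$ into a constant.
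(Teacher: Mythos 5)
Your proof is correct, but it takes a genuinely different route from the paper's. The paper fixes an orthonormal basis $\{f_1,\dots,f_d\}$ of $S_m(\Gamma,\chi)$, writes $\Delta_{\Gamma,k,m,\xi,\chi}=\sum_l\overline{f_l^{(k)}(\xi)}f_l$ via \eqref{eq:029}, matches Fourier coefficients against \eqref{eq:042} to get the identity $\frac{\varepsilon_\Gamma(4\pi)^{m-1}}{\Gamma(m-1)h^m}n^{m-1}\overline{\psi_{\Gamma,n,m,\chi}^{(k)}(\xi)}=\sum_l\overline{f_l^{(k)}(\xi)}a_n(f_l)$, and then invokes Corollary \ref{cor:081} for the $\xi$-dependence together with the half-integral weight version of the Hecke bound $a_n(f_l)=O(n^{m/2})$ from \cite[Corollary 2.1.6]{miyake} for the $n$-dependence. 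You instead extract the $n$-th Fourier coefficient of $\Delta_{\Gamma,k,m,\xi,\chi}$ directly by integrating over a horizontal segment at height $y$, insert the two-variable bound of Proposition \ref{prop:082}, and optimize $y=h/(2\pi n)$ -- in effect rerunning the classical Hecke argument on $\Delta_{\Gamma,k,m,\xi,\chi}$ itself rather than citing it for the basis elements. The two proofs use essentially the same underlying inputs (Proposition \ref{prop:082} is itself proved from the basis expansion and Corollary \ref{cor:081}), but yours is self-contained on the coefficient-bound side, dispensing with the external reference to \cite[Corollary 2.1.6]{miyake}, at the modest cost of the contour-height optimization; the paper's is shorter on the page because it delegates that step. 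Your term-by-term integration is unproblematic: $\Delta_{\Gamma,k,m,\xi,\chi}$ is $h$-periodic under the standing hypotheses of Section \ref{sec:048}, and its Fourier series is a power series in $q=e^{2\pi iz/h}$ with radius of convergence at least $1$, hence converges uniformly on any horizontal segment, with no circularity.
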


\begin{proof}
	Let us fix an orthonormal basis $ \left\{f_1,f_2,\ldots,f_d\right\} $ of $ S_m(\Gamma,\chi) $, and for each $ l\in\left\{1,2,\ldots,d\right\} $ let $ f_l(z) = \sum_{n=1}^\infty a_n(f_l)e^{2\pi in\frac zh} $ be the Fourier expansion of $ f_l $. We have
	\[ \Delta_{\Gamma,k,m,\xi,\chi}(z)=\sum_{l=1}^d\scal{\Delta_{\Gamma,k,m,\xi,\chi}}{f_l}_\Gamma f_l(z)
	= \sum_{n=1}^\infty\left(\sum_{l=1}^d\overline{f_l^{(k)}(\xi)}a_n(f_l)\right)e^{2\pi in\frac zh},\quad z,\xi\in\calH. \]
	hence by Theorem \hyperref[thm:035:1]{\ref*{thm:035}.(\ref*{thm:035:1})}
	\[ \frac{\varepsilon_\Gamma\left(4\pi\right)^{m-1}}{\Gamma(m-1)h^{m}}n^{m-1}\overline{\psi_{\Gamma,n,m,\chi}^{(k)}(\xi)}=\sum_{l=1}^d\overline{f_l^{(k)}(\xi)}a_n(f_l),\qquad n\in\bbZ_{>0},\ \xi\in\calH. \]
	Thus,
	\[ \sup_{\substack{\xi\in\calH,\\n\in\bbZ_{>0}}}n^{\frac m2-1}\Im(\xi)^{\frac m2+k}\abs{\psi_{\Gamma,n,m,\chi}^{(k)}(\xi)}\leq\frac{\Gamma(m-1)h^m}{\varepsilon_\Gamma(4\pi)^{m-1}}\sum_{l=1}^d\left(\sup_{\xi\in\calH}\abs{f_l^{(k)}(\xi)\Im(\xi)^{\frac m2+k}}\right)\left(\sup_{n\in\bbZ_{>0}}\frac{\abs{a_n(f_l)}}{n^{\frac m2}}\right), \]
	and the right-hand side is finite by Corollary \ref{cor:081} and by the half-integral weight version of \cite[Corollary 2.1.6]{miyake}.
\end{proof}

\section{Application to cusp forms for $ \Gamma_0(N) $}\label{sec:084}

We define the automorphic factor $ J:\Gamma_0(4)\times\ScptH\to\bbC $,
\[ J(\gamma,z):=\frac{\Theta(\gamma.z)}{\Theta(z)}, \]
where $ \Theta\in\Hol(\ScptH) $ is given by $ \Theta(z):=\sum_{n\in\bbZ}e^{2\pi in^2z} $. An explicit formula for $ J $ is given by \cite[III.(4.2)]{koblitz}. It easily implies that for every $ N\in4\bbZ_{>0} $
\[ \bfGamma_0(N):=\left\{\left(\gamma,J(\gamma,\spacedcdot)\right):\gamma\in\Gamma_0(N)\cap\Gamma_1(4)\right\} \]
is a discrete subgroup of finite covolume in $ \Met $. 

Let $ m\in\frac52+\bbZ_{\geq0} $ and $ N\in4\bbZ_{>0} $. Let $ \chi $ be an even Dirichlet character modulo $ N $. We identify $ \chi $ with the character of $ \Gamma_0(N) $ given by $ \begin{pmatrix}a&b\\c&d\end{pmatrix}\mapsto \chi(d) $ for all $ \begin{pmatrix}a&b\\c&d\end{pmatrix}\in\Gamma_0(N) $, and with the character of $ \bfGamma_0(N) $ given by $ \left(\gamma,J\left(\gamma,\spacedcdot\right)\right)\mapsto\chi(\gamma) $ for all $ \gamma\in\Gamma_0(N)\cap\Gamma_1(4). $
Finally, we define
\[ S_m(N,\chi):=S_m(\bfGamma_0(N),\chi). \]
This definition of $ S_m(N,\chi) $ is equivalent to the one given in \cite{shimura}. (In \cite{shimura}, $ S_m(N,\chi) $ is defined regardless of the parity of $ \chi $, but turns out to be $ 0 $ if $ \chi $ is odd.) The Petersson inner product on $ S_m(N,\chi) $ is
\[ \scal fg_{\bfGamma_0(N)}=\int_{\Gamma_0(N)\backslash\calH}f(z)\overline{g(z)}\Im( z)^m\,dv(z),\qquad f,g\in S_m(N,\chi), \]
and we have, for all $ k\in\bbZ_{\geq0} $ and $ \xi,z\in\calH $,
\[ \Delta_{\bfGamma_0(N),k,m,\xi,\chi}(z)=\frac{(2i)^m}{8\pi}\left(\prod_{r=0}^{k}(m-1+r)\right)\sum_{\gamma\in\Gamma_0(N)}\frac{\overline{\chi(\gamma)}}{\left(\gamma.z-\overline\xi\right)^{m+k}}J(\gamma,z)^{-2m}. \]

$ \bfGamma_0(N) $ and $ \chi $ satisfy the assumptions of the first paragraph of Section \ref{sec:048}, hence we have the classical Poincar\' e series
\[ \psi_{\bfGamma_0(N),n,m,\chi}(z)=\sum_{\gamma\in\Gamma_0(N)_\infty\backslash\Gamma_0(N)}\overline{\chi(\gamma)}e^{2\pi in\gamma.z}J(\gamma,z)^{-2m},\qquad z\in\calH,\ n\in\bbZ_{>0}, \]
and the cusp forms $ \Delta_{\bfGamma_0(N),k,m,\xi,\chi} $ have the expansion \eqref{eq:040} in a series of classical Poincar\' e series. As a final application of our results, in Corollary \ref{cor:066} we express the action of Hecke operators of half-integral weight on $ \Delta_{\bfGamma_0(N),k,m,\xi,\chi} $ in terms of \eqref{eq:040} (cf.\ \cite[Lemma 5-8]{MuicCurves}). 

For every prime number $ p $, the Hecke operator $ T_{p^2,m,\chi}:S_m(N,\chi)\to S_m(N,\chi) $ is given by 
\[ \sum_{n=1}^\infty a(n)e^{2\pi inz}\Big|T_{p^2,m,\chi}:=\sum_{n=1}^\infty b(n)e^{2\pi inz}, \]
where
\begin{equation}\label{eq:043}
b(n):=a\left(p^2n\right)+\left(\frac{-1}p\right)^{m-\frac12}\chi(p)\left(\frac np\right)p^{m-\frac32}a(n)+\chi\left(p^2\right)p^{2m-2}a\left(n/p^2\right)
\end{equation}
\cite[Theorem 1.7]{shimura}. Here we understand that $ a\left(n/p^2\right)=0 $ if $ p^2\nmid n $, while $ \left(\frac\spacedcdot p\right) $ is the usual Legendre symbol if $ p $ is odd and is identically zero if $ p=2 $. 

If $ p\nmid N $, then
\begin{equation}\label{eq:041}
\scal {f\big|T_{p^2,m,\chi}}g_{\bfGamma_0(N)}=\chi\left(p^2\right)\scal f{g\big|T_{p^2,m,\chi}}_{\bfGamma_0(N)},\qquad f,g\in S_m\left(N,\chi\right).
\end{equation}
This enables us to prove:

\begin{Cor}\label{cor:066}
	Let $ N\in4\bbZ_{>0} $, $ m\in\frac52+\bbZ_{\geq0} $, $ k\in\bbZ_{\geq0} $, and $ \xi\in\calH $. Let $ \chi $ be an even Dirichlet character modulo $ N $. Then, for every prime number $ p $ such that $ p\nmid N $, $ T_{p^2,m,\chi} $ maps the cusp form
	\begin{equation}\label{eq:067}
	\Delta_{\bfGamma_0(N),k,m,\xi,\chi}(z)=\frac{\left(4\pi\right)^{m-1}(-2\pi i)^k}{\Gamma(m-1)}\sum_{n=1}^\infty n^{m+k-1}e^{-2\pi in\overline\xi}\psi_{\bfGamma_0(N),n,m,\chi}(z)
	\end{equation}
	to 
	\begin{equation}\label{eq:044}
	\left(\Delta_{\bfGamma_0(N),k,m,\xi,\chi}\big|T_{p^2,m,\chi}\right)(z)=\frac{\left(4\pi\right)^{m-1}(-2\pi i)^k}{\Gamma(m-1)}\sum_{n=1}^\infty n^{m+k-1}E_{p,k,n,m,\chi}(\xi)\psi_{\bfGamma_0(N),n,m,\chi}(z), 
	\end{equation}
	where
	\[ E_{p,k,n,m,\chi}(\xi):=\mathbbm1_{p^2\bbZ}(n)\frac{\chi\left(p^2\right)}{p^{2k}}e^{-2\pi i\frac n{p^2}\overline\xi}+\left(\frac{-1}p\right)^{m-\frac12}\chi(p)\left(\frac np\right)p^{m-\frac32}e^{-2\pi in\overline\xi}+p^{2m+2k-2}e^{-2\pi ip^2n\overline\xi}. \]
	Here $ \mathbbm1_{p^2\bbZ} $ is the characteristic function of $ p^2\bbZ\subseteq\bbZ $, and $ \left(\frac\spacedcdot p\right) $ is the usual Legendre symbol.
\end{Cor}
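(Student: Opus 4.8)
The plan is to reduce the statement to understanding how $T_{p^2,m,\chi}$ acts on a single classical Poincar\'e series $\psi_{\bfGamma_0(N),n,m,\chi}$, and then to feed this into the given expansion \eqref{eq:067}. Since $S_m(N,\chi)$ is finite-dimensional, $T_{p^2,m,\chi}$ is a continuous linear operator on it, and by Theorem \hyperref[thm:035:2]{\ref*{thm:035}.(\ref*{thm:035:2})} the series \eqref{eq:067} converges in $S_m(N,\chi)$; hence $T_{p^2,m,\chi}$ may be applied term by term, yielding
\[ \Delta_{\bfGamma_0(N),k,m,\xi,\chi}\big|T_{p^2,m,\chi}=\frac{(4\pi)^{m-1}(-2\pi i)^k}{\Gamma(m-1)}\sum_{n=1}^\infty n^{m+k-1}e^{-2\pi in\overline\xi}\left(\psi_{\bfGamma_0(N),n,m,\chi}\big|T_{p^2,m,\chi}\right). \]
Everything then rests on a closed formula expressing $\psi_{\bfGamma_0(N),n,m,\chi}\big|T_{p^2,m,\chi}$ as an explicit linear combination of $\psi_{\bfGamma_0(N),p^2n,m,\chi}$, of $\psi_{\bfGamma_0(N),n,m,\chi}$, and, when $p^2\mid n$, of $\psi_{\bfGamma_0(N),n/p^2,m,\chi}$.

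To obtain that formula I would argue by duality. Specializing \eqref{eq:087} to $\bfGamma_0(N)$ (for which the cusp width and the central index entering \eqref{eq:040} are both $1$, as reflected in the constant of \eqref{eq:067}) shows that $\scal{f}{\psi_{\bfGamma_0(N),n,m,\chi}}_{\bfGamma_0(N)}$ equals $\frac{\Gamma(m-1)}{(4\pi)^{m-1}}n^{1-m}$ times the $n$-th Fourier coefficient $a(n)$ of $f$. Next, reading \eqref{eq:041} as $\scal{f}{\psi_{\bfGamma_0(N),n,m,\chi}|T_{p^2,m,\chi}}_{\bfGamma_0(N)}=\overline{\chi(p^2)}\scal{f|T_{p^2,m,\chi}}{\psi_{\bfGamma_0(N),n,m,\chi}}_{\bfGamma_0(N)}$ (legitimate since $p\nmid N$ forces $\chi(p^2)$ to be a root of unity, so $\chi(p^2)^{-1}=\overline{\chi(p^2)}$), and inserting the Hecke formula \eqref{eq:043} for the coefficients of $f|T_{p^2,m,\chi}$, I would rewrite $\scal{f}{\psi_{\bfGamma_0(N),n,m,\chi}|T_{p^2,m,\chi}}_{\bfGamma_0(N)}$ as an explicit linear combination of $\scal{f}{\psi_{\bfGamma_0(N),p^2n,m,\chi}}_{\bfGamma_0(N)}$, $\scal{f}{\psi_{\bfGamma_0(N),n,m,\chi}}_{\bfGamma_0(N)}$, and $\scal{f}{\psi_{\bfGamma_0(N),n/p^2,m,\chi}}_{\bfGamma_0(N)}$, valid for every $f\in S_m(N,\chi)$; nondegeneracy of the Petersson product then pins down $\psi_{\bfGamma_0(N),n,m,\chi}|T_{p^2,m,\chi}$. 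The point demanding care here is the conjugate-linearity of $\scal{\spacedcdot}{\spacedcdot}_{\bfGamma_0(N)}$ in its second slot: passing from an identity for $\scal{f}{\spacedcdot}_{\bfGamma_0(N)}$ to the cusp form itself conjugates each scalar coefficient, and this is precisely what converts the $\overline{\chi(p^2)}$ and $\overline{\chi(p)}$ produced by \eqref{eq:041} into the unconjugated $\chi(p^2)$ and $\chi(p)$ occurring in $E_{p,k,n,m,\chi}$.

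Finally I would substitute the resulting formula for $\psi_{\bfGamma_0(N),n,m,\chi}|T_{p^2,m,\chi}$ into the displayed series and reindex each of the three sums (the substitution turning $\psi_{\bfGamma_0(N),p^2n,m,\chi}$, respectively $\psi_{\bfGamma_0(N),n/p^2,m,\chi}$, into $\psi_{\bfGamma_0(N),n,m,\chi}$), then read off the coefficient of each $\psi_{\bfGamma_0(N),n,m,\chi}$. The powers of $p$ arising from $n^{m+k-1}$ versus $(n/p^2)^{m+k-1}$ and $(p^2n)^{m+k-1}$, combined with those in the Hecke formula, collapse to the three summands $\chi(p^2)p^{-2k}$, $\left(\frac{-1}p\right)^{m-\frac12}\chi(p)\left(\frac np\right)p^{m-\frac32}$, and $p^{2m+2k-2}$ of $E_{p,k,n,m,\chi}(\xi)$, while the reindexing produces the three exponentials $e^{-2\pi i\frac n{p^2}\overline\xi}$, $e^{-2\pi in\overline\xi}$, $e^{-2\pi ip^2n\overline\xi}$ and the constraint $\mathbbm1_{p^2\bbZ}(n)$ on the first term. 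The main obstacle is organizational rather than conceptual: once the term-by-term application of $T_{p^2,m,\chi}$ is justified as above, the difficulty is only in carrying the conjugations, the $p$-power matching, and the divisibility constraints correctly through the reindexing.
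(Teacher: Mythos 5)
Your argument is correct, and it reaches \eqref{eq:044} by a route that is organized differently from the paper's. The paper never computes $\psi_{\bfGamma_0(N),n,m,\chi}\big|T_{p^2,m,\chi}$ explicitly: instead it applies the reproducing-kernel identity \eqref{eq:029} twice to obtain
\[ \left(\Delta_{\bfGamma_0(N),k,m,\xi,\chi}\big|T_{p^2,m,\chi}\right)(z)=\chi\left(p^2\right)\overline{\left(\Delta_{\bfGamma_0(N),0,m,z,\chi}\big|T_{p^2,m,\chi}\right)^{(k)}(\xi)}, \]
then feeds the coefficient formula \eqref{eq:043} into the Fourier expansion \eqref{eq:042} of $\Delta_{\bfGamma_0(N),0,m,z,\chi}$ and rearranges. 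You instead first derive the closed formula
\[ \psi_{\bfGamma_0(N),n,m,\chi}\big|T_{p^2,m,\chi}=\chi\left(p^2\right)p^{2m-2}\psi_{\bfGamma_0(N),p^2n,m,\chi}+\left(\frac{-1}p\right)^{m-\frac12}\chi(p)\left(\frac np\right)p^{m-\frac32}\psi_{\bfGamma_0(N),n,m,\chi}+\mathbbm1_{p^2\bbZ}(n)\,\psi_{\bfGamma_0(N),n/p^2,m,\chi} \]
by duality, and then apply the operator termwise to \eqref{eq:067}. The two computations are dual to each other and consume exactly the same inputs --- the adjoint relation \eqref{eq:041}, the Hecke coefficient formula \eqref{eq:043}, and the Petersson--Poincar\'e duality (\eqref{eq:087} for you, its consequence \eqref{eq:042} for the paper) --- and both end with the same reindexing of three sums; your bookkeeping of the conjugations and $p$-powers checks out and reproduces $E_{p,k,n,m,\chi}$ exactly. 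Your version has the merit of isolating a reusable formula for the Hecke action on a single classical Poincar\'e series; the paper's version bypasses that intermediate statement, at the cost of routing the computation through the auxiliary kernel $\Delta_{\bfGamma_0(N),0,m,z,\chi}$. Two minor points of hygiene: the termwise application of $T_{p^2,m,\chi}$ is indeed legitimate for the reason you give (continuity on the finite-dimensional space $S_m(N,\chi)$ together with convergence of \eqref{eq:067} in that space, Theorem \hyperref[thm:035:2]{\ref*{thm:035}.(\ref*{thm:035:2})}), and the final recombination of the three reindexed series into a single sum over $n$ should cite the absolute convergence supplied by Lemma \ref{lem:034}, as the paper does.
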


\begin{proof}
	\eqref{eq:067} is a special case of \eqref{eq:040}.
	The proof of \eqref{eq:044} is analogous to that of \cite[Lemma 5-8]{MuicCurves}. For every $ z\in\calH $, we have
	\begin{align*}
		\left(\Delta_{\bfGamma_0(N),k,m,\xi,\chi}\big|T_{p^2,m,\chi}\right)(z)&\overset{\eqref{eq:029}}=\scal{\Delta_{\bfGamma_0(N),k,m,\xi,\chi}\big|T_{p^2,m,\chi}}{\Delta_{\bfGamma_0(N),0,m,z,\chi}}_{\bfGamma_0(N)}\\
		&\overset{\eqref{eq:041}}=\chi\left(p^2\right)\scal{\Delta_{\bfGamma_0(N),k,m,\xi,\chi}}{\Delta_{\bfGamma_0(N),0,m,z,\chi}\big|T_{p^2,m,\chi}}_{\bfGamma_0(N)}\\
		&\overset{\eqref{eq:029}}=\chi\left(p^2\right)\overline{\left(\Delta_{\bfGamma_0(N),0,m,z,\chi}\big|T_{p^2,m,\chi}\right)^{(k)}(\xi)}.
	\end{align*}
	By \eqref{eq:042} and \eqref{eq:043}, the right-hand side equals
	\begin{align*}
		\frac{(4\pi)^{m-1}}{\Gamma(m-1)}\sum_{n=1}^\infty(-2\pi in)^k
		\Big(&\chi\left(p^2\right)\left(p^2n\right)^{m-1}\psi_{\bfGamma_0(N),p^2n,m,\chi}(z)\\
		&+\left(\frac{-1}p\right)^{m-\frac12}\chi(p)\left(\frac np\right) p^{m-\frac32}n^{m-1}\psi_{\bfGamma_0(N),n,m,\chi}(z)\\
		&+p^{2m-2}\left(n/p^2\right)^{m-1}\psi_{\bfGamma_0(N),n/p^2,m,\chi}(z)\Big)e^{-2\pi in\overline\xi}.
	\end{align*} 
	By rearranging this sum to be over the index $ n $ in $ \psi_{\bfGamma_0(N),n,m,\chi}(z) $, we obtain \eqref{eq:044}. The rearrangement is valid by Lemma \ref{lem:034}.
\end{proof}

\end{document}